\newtheorem{theorem}{Theorem}[section]
\newtheorem{lemma}[theorem]{Lemma}
\newtheorem{proposition}[theorem]{Proposition}
\newtheorem{corollary}[theorem]{Corollary}
\newtheorem*{claim*}{Claim}
\newtheorem{claim}{Claim}
\theoremstyle{definition}
\newtheorem{remark}[theorem]{Remark}
\newtheorem{example}{Example}
\newtheorem*{theorem-wp-bound2*}{Theorem~\ref{T : twisted Linch bound 2}}
\newcommand{\FF}{\mathcal{F}}
\newcommand{\T}{\mathcal{T}}
\newcommand{\C}{\mathcal{C}}
\newcommand{\Mod}{\mathrm{Mod}}
\newcommand{\vol}{\mathrm{vol}}
\newcommand{\BA}{\mathbb A}	
\newcommand{\BZ}{\mathbb Z}
\newcommand{\BT}{\mathbb T}
\newcommand{\union}{\cup}
\newcommand{\intersect}{\cap}
\newcommand{\boundary}{\partial}
\newcommand\tsim{\kern-.4em\sim}
\newcommand\ssm{\smallsetminus}
\newcommand{\cM}{\mathring{M}}
\newcommand{\cS}{\mathring{S}}
\newcommand{\interior}{\mathrm{int}}
\newcommand{\B}{\mathcal{B}}
\newcommand{\N}{\mathcal{N}}
\newcommand{\D}{\mathcal{D}}
\newcommand{\I}{\mathcal{I}}
\newcommand{\M}{\mathcal{M}}
\renewcommand{\SS}{\mathcal{S}}
\newcommand{\R}{\mathbb{R}}
\renewcommand{\wp}[1]{||{#1||_{wp}}}
\newcommand{\BS}{\mathbb S}	
\newcommand{\CT}{\mathcal T}
\newcommand{\Area}{\mathrm{Area}}
\newcommand{\inter}{\mathrm{int}}
\newcommand{\belt}{\mathbb B}
\DeclareMathOperator{\arccosh}{arccosh}
\title[WP length and fibered fillings]{Weil--Petersson translation length and manifolds with many fibered fillings}
\author[C. Leininger, Y.N. Minsky, J. Souto, and S.J. Taylor]{Christopher Leininger, Yair N. Minsky, Juan Souto, and Samuel J. Taylor}
\date{\today}                                           
\begin{document}
\maketitle

\begin{abstract}
We prove that any mapping torus of a pseudo-Anosov mapping class with bounded normalized
Weil-Petersson translation length contains a finite set of {\em transverse and level} closed curves, and drilling out this set of curves results in one of a finite number of cusped hyperbolic $3$--manifolds.  The number of manifolds in the finite list depends only on the bound for normalized translation length.  We also prove a complementary result that explains the necessity of removing level curves by producing
new estimates for the Weil-Petersson translation length of compositions of pseudo-Anosov mapping classes and arbitrary powers of a Dehn twist. 
\end{abstract}

\section{Introduction}

Let $S$ be a hyperbolic surface and let $\T(S)$ be its Teichm\"uller space equipped with the Weil--Petersson (WP) metric $d_{wp}$. For any mapping class $\phi$, let $\Vert \phi \Vert_{wp}$ be the translation length of $\phi$ with respect to its isometric action on $(\T(S), d_{wp})$. The focus of this article is on the structure of pseudo-Anosov homeomorphisms (on any surface) with bounded normalized WP translation length.
More precisely, let $L > 0$ and define 

\[
\Phi_{wp}(L)=  \big\{ \phi \colon S \to S \mid \phi \text{ is pA and } \sqrt{|\chi(S)|} \cdot \wp{\phi}  \le L   \big\}
\]
to be the set of pseudo-Anosov homeomorphisms on all orientable surfaces whose \emph{normalized WP translation length} is at most $L$.
For $L$ sufficiently large, $\Phi_{wp}(L)$ contains pseudo-Anosov homeomorphisms on all closed surfaces of genus $g\ge 2$.  This is a consequence of the analogous statement for {\em normalized Teichm\"uller translation length}, $|\chi(S)| \cdot \Vert \phi \Vert_T$, proved by Penner \cite{Penner-bounds}, and an inequality due to Linch \cite{Linch}; see Section~\ref{S:WP bounds}.

We will prove results constraining $\Phi_{wp}(L)$ from two directions. Theorem
\ref{T : twisted Linch bound 2} will give upper bounds on normalized WP translation
length for compositions with arbitrary powers of Dehn twists, thus showing (Corollary
\ref{C: infinitely many}) that for large enough $L$, $\Phi_{wp}(L)$ contains infinitely many
conjugacy classes in each genus.  Theorems \ref{th:main_wp} and \ref{th:structure} show
that $\Phi_{wp}(L)$ is controlled by a finite number of 3-manifolds, obtained from each
$\phi:S\to S$ by forming the mapping torus and then deleting a collection of curves
transverse to fibers or level within fibers. The level curves in particular account for
the Dehn twist phenomenon analyzed in Theorem \ref{T : twisted Linch bound 2}.

\medskip

Our first result extends Linch's inequality.
\begin{theorem} \label{T : twisted Linch bound 2}
There exists $c > 0$ so that if $\phi \colon S \to S$ is a pseudo-Anosov on a closed surface, $\alpha \subset S$ is a simple closed curve with $\tau_\alpha = \tau_\alpha(\phi) \geq 9$, and $k \in \mathbb Z$, then
\[ \Vert T_\alpha^k \circ \phi \Vert_{wp} \leq \Vert \phi \Vert_T  \sqrt{c |\chi(S)|}.\]
\end{theorem}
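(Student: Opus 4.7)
The plan is to bound $\|T_\alpha^k \circ \phi\|_{wp}$ by constructing, for a carefully chosen basepoint $X \in \T(S)$, a path from $X$ to $T_\alpha^k\phi(X)$ of WP length controlled by $\sqrt{c|\chi(S)|}\,\|\phi\|_T$. The natural candidate is a concatenation of two legs: a Teichm\"uller geodesic segment for $\phi$ (controlled by Linch's inequality) and a cusp detour realizing the twist $T_\alpha^k$ (controlled by the local form of the WP metric near the $\alpha$-pinching stratum).

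First I would choose $X := \phi^{-1}(Y_0)$, where $Y_0$ lies on the Teichm\"uller axis $\beta$ of $\phi$ at a point where $\alpha$ has shortest hyperbolic length. The hypothesis $\tau_\alpha(\phi) \geq 9$, combined with Rafi's theorem on short curves along Teichm\"uller geodesics, yields $\ell_\alpha(Y_0) \leq \epsilon_0$ for a universal $\epsilon_0$. The Teichm\"uller leg from $X$ to $Y_0 = \phi(X)$ has Teichm\"uller length $\|\phi\|_T$, so Linch's inequality (integrated from the pointwise $L^2$-vs-$L^\infty$ comparison on the defining quadratic differential along the geodesic) gives $d_{wp}(X, Y_0) \leq \sqrt{c_1 |\chi(S)|}\,\|\phi\|_T$. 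For the twist leg, Wolpert's formulas for the WP metric near the nodal stratum $\Delta_\alpha$ obtained by pinching $\alpha$ give $d_{wp}(Y_0, \Delta_\alpha) \lesssim \sqrt{\ell_\alpha(Y_0)}$; since $T_\alpha$ extends to a WP isometry of the completion fixing $\Delta_\alpha$ pointwise, the triangle inequality yields $d_{wp}(Y_0, T_\alpha^k Y_0) = O(\sqrt{\epsilon_0})$, independent of $k$.

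Combining, $\|T_\alpha^k \circ \phi\|_{wp} \leq d_{wp}(X, T_\alpha^k \phi(X)) \leq \sqrt{c_1 |\chi|}\,\|\phi\|_T + O(1)$. The main obstacle is converting this additive bound into the clean multiplicative form in the theorem, since $\sqrt{|\chi|}\,\|\phi\|_T$ can in principle be small (e.g.\ for minimum-dilatation pseudo-Anosovs on high genus). I expect the resolution to require either a lower bound on $\|\phi\|_T$ extracted from $\tau_\alpha(\phi) \geq 9$ via Rafi's combinatorial distance formula (a large $\alpha$-projection forces a contribution of at least $\log \tau_\alpha$ to $\|\phi\|_T$), or---more likely, and the technical heart of the proof---replacing the two-leg concatenation by a single path that performs the Dehn twist as a ``horocyclic'' displacement in the product-region coordinates on the $\alpha$-thin part of $\T(S)$, executed simultaneously with the Teichm\"uller progression of $\phi$, so that the total WP length is controlled by a single Linch-style integral with no additive constant.
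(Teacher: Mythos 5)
Your two-leg concatenation correctly isolates the core difficulty---converting the additive bound $\sqrt{c_1|\chi|}\,\Vert\phi\Vert_T + O(1)$ into a multiplicative one---but the proposal stops exactly there, and your proposed resolution (a) cannot rescue it. The hypothesis $\tau_\alpha(\phi)\geq 9$ does \emph{not} force a universal lower bound on $\Vert\phi\Vert_T$: the proof of Corollary~\ref{C:specific infinitely many} exhibits pseudo-Anosovs $\phi_g\colon S_g\to S_g$ over a fixed fibered face with $|\chi(S_g)|\,\Vert\phi_g\Vert_T$ bounded (hence $\Vert\phi_g\Vert_T\to 0$ as $g\to\infty$) while $\tau_\alpha(\phi_g)\geq 11$ for a fixed curve $\alpha$. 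Rafi's distance formula only lets the $\log\tau_\alpha$ term contribute to $d_T(X,\phi^N X)$ once $N$ is large enough that the active interval for $\alpha$ fits inside a single power of $\phi$; when $\Vert\phi\Vert_T$ is small a fundamental domain sees only a sliver of the twisting, and the projections $d_\alpha(\mu_X,\mu_{\phi X})$ can stay below threshold. So the additive term genuinely survives, and after multiplying by $\sqrt{|\chi|}$ it becomes $O(\sqrt{|\chi|})$, which would destroy the very application (Corollary~\ref{C: infinitely many}) the theorem is designed to enable.

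Your resolution (b) is the right instinct but nothing of it is implemented, and the implementation is the entire content of the proof; moreover the paper carries it out in a different ambient setting. Rather than building a path inside $\T(S)$ via augmented/Fenchel--Nielsen coordinates, the paper passes to the mapping torus $M_\phi$ with its singular solv structure, uses Rafi's theorem to locate a compatible solid torus about $\alpha$ foliated by flat annuli of modulus controlled by $\tau_\alpha$ (Lemma~\ref{L:good solid torus}), and performs a fibered Dehn surgery replacing it with a solid torus carrying an explicit leaf-wise conformal structure: pinch the modulus up over the first half of the interval, perform all $k$ twists by an affine shear on a middle sub-annulus where the hyperbolic area is $\pi/\text{modulus}$ so the $L^2$ Weil--Petersson cost of the shear is arbitrarily small, then un-pinch (Corollary~\ref{C: reparameterized} and Lemmas~\ref{L:pinch-twist},~\ref{L:sharp leafwise annulus}). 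This yields a $(T_\alpha^k\circ\phi)$-invariant path whose Weil--Petersson speed is bounded pointwise using Schwarz--Pick and the $L^2$ estimate of Lemma~\ref{lem wp}, and the entire twist contributes a multiplicative factor $1+6/h^2$ inside a single Linch-type integral over one period of length $\Vert\phi\Vert_T$. There is no cusp detour and no additive constant to absorb.
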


Here $T_\alpha$ is a Dehn twist in $\alpha$ and $\tau_\alpha(\phi)$ is the {\em twisting number} of $\phi$ about $\alpha$; see \S\ref{S:good solid tori} for definitions and \S\ref{S:bound proof} for a more precise statement.  From this theorem we obtain the following additional information about $\Phi_{wp}(L)$.

\begin{corollary} \label{C: infinitely many}   There exists $L > 0$ so that the set $\Phi_{wp}(L)$ contains infinitely many conjugacy classes of pseudo-Anosov mapping classes for every closed surface of genus $g \geq 2$.
\end{corollary}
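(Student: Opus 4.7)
The plan is to combine Theorem~\ref{T : twisted Linch bound 2} with Penner's normalized Teichm\"uller bound to manufacture, for each $g \geq 2$, an infinite family of pseudo-Anosov mapping classes whose normalized Weil--Petersson translation lengths all lie below a single constant $L$.

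First I would invoke Penner's construction \cite{Penner-bounds} to obtain a universal constant $L_0 > 0$ and, for every $g \geq 2$, a pseudo-Anosov $\phi_g \colon S_g \to S_g$ with $|\chi(S_g)| \cdot \Vert \phi_g \Vert_T \leq L_0$. The second step is to produce a simple closed curve $\alpha_g \subset S_g$ with $\tau_{\alpha_g}(\phi_g) \geq 9$, so that Theorem~\ref{T : twisted Linch bound 2} applies to the whole family $\{T_{\alpha_g}^k \circ \phi_g\}_{k \in \mathbb Z}$. I expect this to be the main obstacle, since a priori nothing forces the annular subsurface projections associated to an arbitrary pseudo-Anosov to be large. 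I would resolve it by exploiting the structural theorem of Farb--Leininger--Margalit, which realizes small-dilatation pseudo-Anosovs (and in particular Penner's examples) as fibers of Dehn fillings of a finite list of cusped hyperbolic $3$-manifolds; a sufficiently deep filling creates a short curve in the fiber, and short fiber curves in hyperbolic mapping tori have large annular projections by the classical short-curves-are-projections-large dictionary. A cruder fallback would be to replace $\phi_g$ by $T_\alpha^{9} \circ \phi_g$ for an appropriate $\alpha$ and absorb the resulting bounded change into $L_0$.

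Once $\alpha_g$ is in hand, Theorem~\ref{T : twisted Linch bound 2} yields, for every $k \in \mathbb Z$,
\[
\sqrt{|\chi(S_g)|}\cdot\Vert T_{\alpha_g}^k \circ \phi_g \Vert_{wp} \;\leq\; \sqrt{c}\cdot|\chi(S_g)|\cdot \Vert \phi_g \Vert_T \;\leq\; \sqrt{c}\,L_0 \;=:\; L,
\]
so whenever $T_{\alpha_g}^k \circ \phi_g$ is pseudo-Anosov it contributes an element of $\Phi_{wp}(L)$.

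I would close the argument by extracting infinitely many non-conjugate pseudo-Anosov classes from $(T_{\alpha_g}^k \circ \phi_g)_k$ via Thurston's hyperbolic Dehn surgery theorem. Since $\phi_g$ is pseudo-Anosov, the complement of the transverse loop $\alpha_g \times \{0\}$ in the mapping torus of $\phi_g$ is a cusped hyperbolic $3$-manifold, and the mapping torus of $T_{\alpha_g}^k \circ \phi_g$ is obtained from it by Dehn filling along a slope depending linearly on $k$. For all but finitely many $k$ the filling is hyperbolic (so $T_{\alpha_g}^k \circ \phi_g$ is pseudo-Anosov), and distinct sufficiently large $|k|$ give pairwise non-homeomorphic filled manifolds, hence pairwise non-conjugate mapping classes. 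This produces infinitely many conjugacy classes in $\Phi_{wp}(L)$ on each closed $S_g$ with $g \geq 2$, as required.
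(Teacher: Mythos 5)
Your overall strategy — apply Theorem~\ref{T : twisted Linch bound 2} to a family with bounded normalized Teichm\"uller length, let $k$ vary, and separate conjugacy classes by Dehn filling — is the same skeleton as the paper's proof. The substantive difference, and the genuine gap in your proposal, lies in step two: producing curves $\alpha_g \subset S_g$ with $\tau_{\alpha_g}(\phi_g) \geq 9$ uniformly in $g$. You correctly flag this as the main obstacle, but neither of your proposed resolutions closes it.

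Your option (a) confuses two kinds of curves. The Farb--Leininger--Margalit structure theorem produces, in the mapping torus $M_{\phi_g}$, short closed geodesics that are \emph{transverse} to the fiber; these are not simple closed curves in $S_g$ and so do not have a twisting number $\tau_{\alpha}(\phi_g)$. Even if one could arrange a short \emph{level} geodesic $\alpha$, the short-curve dichotomy for Kleinian surface groups does not single out annular projections: a short level curve corresponds to a large subsurface projection for some domain $Y$ with $\alpha \subseteq \partial Y$, which could be a non-annular subsurface. Thus nothing forces $\tau_{\alpha}(\phi_g) = d_\alpha(\mathcal L_+, \mathcal L_-)$ to be large. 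Your option (b) has a different problem: Theorem~\ref{T : twisted Linch bound 2} bounds $\Vert T_\alpha^k \circ \psi \Vert_{wp}$ in terms of $\Vert \psi \Vert_T$, so replacing $\phi_g$ by $\psi_g = T_\alpha^9 \circ \phi_g$ requires a bound on $|\chi(S_g)| \cdot \Vert \psi_g \Vert_T$, and composing with a fixed power of a Dehn twist gives no control on the Teichm\"uller translation length whatsoever — the ``resulting bounded change'' you hope to absorb into $L_0$ is not bounded. (Also, even with option (b), you have not verified that $\tau_\alpha(\psi_g) \geq 9$; composing with $T_\alpha^9$ does not obviously guarantee twisting at least $9$ without knowing something about $\tau_\alpha(\phi_g)$ already.)

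The paper circumvents this issue entirely by not starting from Penner's examples. It builds a single explicit genus-$2$ pseudo-Anosov $\phi_2$ with a large flat cylinder about $\alpha$ built in by hand (yielding $\tau_\alpha(\phi_2) \geq 11$ via Rafi), then uses Thurston's fibered face theory and Fried's entropy function to find genus-$g$ fibers $S_g$ in the mapping torus of $\phi_2$, all containing $\alpha$, with $|\chi(S_g)| \cdot \Vert \phi_g \Vert_T$ uniformly bounded. The crucial added ingredient — due to the observation recorded in the cited Minsky--Taylor paper — is that $\tau_\alpha(\phi_g) = \tau_\alpha(\phi_2)$ for all fibers over the same fibered face, which gives the uniform lower bound on twisting for free. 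You should look at the proof of Corollary~\ref{C:specific infinitely many} in \S\ref{S:examples}; it is self-contained and also yields the explicit constant $L = 124$. A minor terminological point: the loop $\alpha_g \times \{0\}$ in the mapping torus is a level curve, not a transverse one; your Dehn-filling argument for separating conjugacy classes is otherwise correct.
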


\begin{remark} The key point of Corollary~\ref{C: infinitely many} is that the conclusion
  holds for {\bf every} closed surface of genus $g \geq 2$.  Indeed, it was already known
  that for a fixed surface one can find infinitely many conjugacy classes of pseudo-Anosov
  mapping classes with bounded WP translation distance because of the nature of the
  incompleteness of $d_{wp}$ discovered by Wolpert \cite{Wol-incomplete} and Chu
  \cite{chu}.  We also note that these statements sharply contrast the situation for the
  Teichm\"uller metric, where there are only finitely many conjugacy classes  with any
  bound on translation distance for a fixed surface; see \cite{AY,Iv3}.
\end{remark}

The idea of the proof of Corollary~\ref{C: infinitely many} from Theorem~\ref{T : twisted Linch bound 2} is as follows (see \S\ref{S:examples} for details).  We can explicitly construct a $3$--manifold $M$ that contains fibers $S_g$ of genus $g$ for all $g \geq 2$, each of which contains a fixed simple closed curve $\alpha \subset M$.  Appealing to results of Fried~\cite{fried1982flow} and Thurston~\cite{thurston1986norm}, we can find a constant $c' > 0$ so that the monodromies $\phi_g \colon S_g \to S_g$ have bounded normalized Teichm\"uller translation length $|\chi(S_g)| \Vert \phi_g \Vert_T \leq c'$ (c.f.~McMullen~\cite{mcmullen2000polynomial}).  Moreover, these can be chosen so that $\tau_\alpha(\phi_g) \geq 9$ for all $g$. Theorem~\ref{T : twisted Linch bound 2} provides a $c > 0$ so that
\[ \sqrt{ |\chi(S_g)|} \  \Vert \phi_g \circ T_\alpha^k \Vert_{wp} \leq |\chi(S_g)|
\sqrt{c} \ \Vert \phi_g \Vert_T  \leq \sqrt{c} c'.\]
For all but finitely many $k$,  $\phi_g \circ T_\alpha^k$ is pseudo-Anosov, and all such pseudo-Anosov homeomorphisms are in $\Phi_{wp}(L)$, for $L = \sqrt{c} c'$.  This construction can be carried out explicitly to produce concrete bounds, but is actually much more robust; see Corollary \ref{cor:fiber_twist}.

\bigskip

For any fixed $k \in \mathbb Z$, the mapping classes $\phi_g \circ T_\alpha^k$ in the construction just described are all monodromies of a fixed $3$--manifold $M_k$, independent of $g$.  We could alternatively describe all the manifolds $M_k$ as being obtained by an integral {\em Dehn surgery} of the single $3$--manifold $M$ along $\alpha$.  Our next result, the main theorem, states that all pseudo-Anosov homeomorphisms in $\Phi_{wp}(L)$ arise from this and a related construction.

To state the main theorem, let $\phi \colon S \to S$ be a homeomorphism and $M = M_\phi$ the mapping torus, which fibers over the circle with fiber $S$.
An embedded $1$-manifold $\C$ in $M$ is called \emph{monotonic} 
with respect to $S$ if there is a foliation of $M$ by $S$--fibers such that each component
of $\C$ is either \emph{transverse} to the foliation, or \emph{level}, i.e. embedded in some leaf. When $\C$ is monotonic, we let $\C_\ell$ be the union of level curves and $\C_t$ be the union of transverse curves. 
Note that if $M$ is fibered and $\C \subset M$ is monotonic, then $M \ssm \C_t$ is fibered and $\C_\ell$ is a collection of level curves of $M \ssm \C_t$ with respect to some fibration.

\begin{theorem} \label{th:main_wp}
Fix $L>0$. For each $(\phi\colon S\to S) \in \Phi_{wp}(L)$ there is a monotonic $1$-manifold $\C_\phi \subset M_\phi$ with respect to $S$ so that the resulting collection of $3$-manifolds 
\[
\big\{M_\phi \ssm \C_\phi :  \phi \in \Phi_{wp}(L)  \big\}
\]
is finite.
\end{theorem}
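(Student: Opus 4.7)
The plan is to drill a carefully chosen union of short closed geodesics of $M_\phi$ and apply Jorgensen--Thurston's finiteness theorem for cusped hyperbolic $3$-manifolds of bounded volume. The two tasks are (i) bounding $\vol(M_\phi \ssm \C_\phi)$ uniformly in $L$, and (ii) arranging $\C_\phi$ to be monotonic with respect to the fibration by $S$.

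For (i), Brock's inequality relating WP translation length and the volume of the mapping torus (combined with his quasi-isometry between the WP metric and the pants graph, and the pants-graph volume estimate for fibered $3$-manifolds) yields a bound of the form $\vol(M_\phi) \le K \sqrt{|\chi(S)|}\, \Vert \phi \Vert_{wp}$ with $K$ universal, so the normalization $\sqrt{|\chi(S)|}\, \Vert \phi \Vert_{wp} \le L$ produces the uniform estimate $\vol(M_\phi) \le KL$ independent of $\chi(S)$. Fix a Margulis constant $\epsilon > 0$ and let $\C_\phi$ be the union of closed geodesics of $M_\phi$ of length less than $\epsilon$; the number of such geodesics is bounded in terms of $KL$ and $\epsilon$. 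The Brock--Bromberg drilling theorem (or the effective version of Futer--Kalfagianni--Purcell) then gives $\vol(M_\phi \ssm \C_\phi) \le V(L)$ for $V$ depending only on $L$, and Jorgensen--Thurston supplies finiteness of the resulting collection of complete cusped hyperbolic $3$-manifolds.

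For (ii), which is the main technical obstacle, one argues that the short geodesics of $M_\phi$ admit representatives monotonic with respect to the fibration. Invoking Minsky's model manifold description of the doubly degenerate $\BZ$-cover of $M_\phi$, each short closed geodesic corresponds to a subsurface $W \subset S$ along which iteration of $\phi$ accumulates large subsurface projection distance. When $W$ is an annulus with core $\alpha$, its Margulis tube lies in a range of consecutive fibers near $\alpha$ and the core geodesic is isotopic to a level copy of $\alpha$, giving a level component of $\C_\phi$. When $W$ is a proper non-annular subsurface, the tube is transverse to the fibration and the core geodesic is isotopic to a transverse curve. This dichotomy produces the decomposition $\C_\phi = (\C_\phi)_\ell \cup (\C_\phi)_t$ and establishes monotonicity. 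The volume estimate and the Jorgensen--Thurston step are routine once the appropriate short curves are drilled, but aligning the thin-part geometry of $M_\phi$ with the fibration structure via subsurface projection data is the heart of the argument and the step that requires the essential input of the WP hypothesis.
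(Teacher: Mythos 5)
Your outline shares the paper's skeleton --- WP--volume inequality to bound $\vol(M_\phi)$, drill short curves, apply J{\o}rgensen--Thurston. That volume step is Theorem~\ref{th:vol} here, and the finiteness step follows (the paper applies J{\o}rgensen--Thurston to $M_\phi$ directly, then handles the finitely many exceptional slopes per boundary torus via an inductive descent which you omit but which is a manageable wrinkle). The serious problem is in step (ii), where the essential technical content lies.

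The proposed dichotomy is not correct. If $W \subsetneq S$ has large subsurface projection $d_W(\mathcal L_+,\mathcal L_-)$, the short geodesic it produces is freely homotopic to a curve $\alpha$ embedded in $S$ (the core of $W$ when $W$ is annular, a component of $\partial W$ otherwise). Any such curve has zero algebraic intersection with the fiber class $[S] \in H_2(M_\phi)$, so its geodesic representative cannot possibly be transverse: a transverse curve, by definition, meets every fiber and so pairs nontrivially with $[S]$. Thus both annular and non-annular subsurfaces, insofar as they contribute short geodesics, contribute only candidate \emph{level} curves. The transverse short geodesics are of a different nature --- they are the ones already present in the Farb--Leininger--Margalit Teichm\"uller-length result, arising from short transverse orbits visible in the singular Sol structure --- and do not come from subsurface projection in the way you describe. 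Your account therefore misidentifies the source of the transverse curves and wrongly attributes transversality to non-annular subsurfaces.

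Even with the correct labelling, the claim ``the core geodesic is isotopic to a level/transverse curve'' is precisely what the paper's Theorem~\ref{th:structure} establishes, and its proof occupies all of Section~\ref{sec:fibered_filling}. There are genuine obstructions: a priori knotting of the $I$-foliated product regions in $\FF_\beta$, non-orientability of the induced interval foliation, and knotting of the floating solid tori. The paper rules these out using Floyd--Oertel incompressible branched surfaces, Waldhausen's unknotting lemma for subproducts, Gauss--Bonnet area estimates inside Margulis tubes, and Otal's unknottedness theorem. The monotonicity must also hold \emph{simultaneously} for all components of $\C_\phi$ --- this is the content of Lemmas~\ref{straighten in product} and~\ref{different straightenings} and does not follow from knowing the isotopy class of each component separately. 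Finally, the WP hypothesis enters the argument \emph{only} through the volume bound; Theorem~\ref{th:structure} is a statement about sufficiently long fibered fillings and makes no reference to Weil--Petersson geometry, so the assertion that aligning the thin part with the fibration ``requires the essential input of the WP hypothesis'' is misplaced.
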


Theorem \ref{th:main_wp} is the WP analog of the result of
Farb--Leininger--Margalit \cite{farb-leininger-margalit} for pseudo-Anosovs with bounded normalized Teichm\"uller translation length. In the Teichm\"uller setting, it sufficed to remove only transverse curves. For the WP metric, removing certain level curves is necessary since integral Dehn surgery along a level curve changes the monodromy by composition with a power of a Dehn twist as in the example proving Corollary~\ref{C: infinitely many} above. 
As composing with such a power of a twist can still result in pseudo-Anosovs with bounded normalized WP translation length (Theorem~\ref{T : twisted Linch bound 2}), removing level curves is unavoidable if the resulting collection of manifolds is to be finite.

In fact, Theorem \ref{th:main_wp} is really a corollary of the following result together with work of Brock--Bromberg \cite{brock2016inflexibility} and  Kojima--McShane \cite{kojima2018normalized}.

\begin{theorem}[Many fibered fillings] \label{th:structure}
Let $\cM$ be a compact $3$-manifold whose boundary components are tori such that $\mathrm{int}(\cM)$ is hyperbolic. 
Then all \emph{sufficiently long fibered fillings} $M$ of $\cM$ have the following form:
For any fiber $S$ of $M$, there is a $1$-manifold $\C =\C_\ell \sqcup \C_t$ such that
\begin{enumerate}
\item $\cM =  M \ssm  \C$.
\item The curves $\C_t$ are transverse in $M$ with respect to $S$. So $M \ssm  \C_t$ is fibered.
\item The curves $\C_\ell$ are level in $M$ with respect to $S$.
\end{enumerate}
\end{theorem}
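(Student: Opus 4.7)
The plan is to take $\C = \{c_1, \ldots, c_n\}$ to be the cores of the filling solid tori $V_1, \ldots, V_n$, so that condition (1) holds automatically as $M \ssm \C \cong \cM$. To separate $\C$ into $\C_t$ and $\C_\ell$, let $[\omega] \in H^1(M;\mathbb{Z})$ be the class of the fibration $\pi\colon M \to S^1$ with fiber $S$, and let $\mu_i$ denote the meridian of $V_i$ on $T_i = \partial V_i$. Since $\mu_i$ bounds in $V_i$, one has $[\omega](\mu_i) = 0$, so the restriction $[\omega]|_{T_i} \in H^1(T_i)$ is controlled entirely by the integer $d_i := [\omega](c_i)$. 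Declare $c_i \in \C_t$ when $d_i \neq 0$ and $c_i \in \C_\ell$ when $d_i = 0$.

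When $d_i \neq 0$, the map $\pi|_{V_i}\colon V_i \to S^1$ has nonzero degree $d_i$ on the core $c_i$. A standard model $V_i \cong D^2 \times S^1$ with $\pi|_{V_i}(z,\theta) = d_i \theta$ and $c_i = \{0\} \times S^1$ makes $c_i$ transverse to each fiber slice $D^2 \times \{\theta_0\}$. An isotopy within $V_i$ brings the actual core into this standard position, establishing (2).

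When $d_i = 0$, one has $[\omega]|_{T_i} = 0$, so $\pi|_{V_i}$ is nullhomotopic. After isotopy, $V_i$ lies in the preimage of a small arc $I \subset S^1$, i.e., in a product neighborhood $S \times I$ between consecutive fibers. The task is to isotope $c_i \subset V_i \subset S \times I$ to a level curve in some fiber $S \times \{t_0\}$. Since $S$ is hyperbolic, $S \times I$ is atoroidal and irreducible, so $\partial V_i$ must be compressible in its exterior; a compression combined with irreducibility yields a $2$-sphere bounding a ball $B$ in $S \times I$. For sufficiently long fillings, $B$ contains $V_i$, so the core $c_i$ is an unknot in a ball, hence isotopic to a level curve in some fiber $S \times \{t_0\}$, establishing (3).

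The main obstacle is proving the compression-produced ball $B$ contains $V_i$ rather than being disjoint from it. For short or combinatorially complicated fillings, $V_i \subset S \times I$ could be embedded in a twisted way so that $B$ ends up on the wrong side, yielding a core $c_i$ that is knotted in $S \times I$ and not isotopic to any level curve. The ``sufficiently long filling'' hypothesis is designed to rule out this pathology, presumably via geometric inflexibility in the sense of Brock--Bromberg: for long fillings the geometry of $M$ near $V_i$ closely mirrors the cusp geometry of $\cM$, forcing $V_i$ into a standard (essentially unknotted) embedding in the slab.
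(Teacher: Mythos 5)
Your proposal is quite different from the paper's argument and contains a genuine error together with a significant unaddressed gap.

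The error is in the $d_i=0$ case, where you assert that since $S\times I$ is atoroidal and irreducible, ``$\partial V_i$ must be compressible in its exterior.'' This does not follow, and in fact the opposite is true: because $c_i$ is the core of a filling solid torus of a hyperbolic manifold it is an essential curve, so $\partial V_i$ is \emph{incompressible} in $S\times I\ssm V_i$. To see this, suppose $D$ were a compressing disk in the exterior. If $\partial D$ is a meridian of $V_i$ then $D$ together with a meridian disk gives a sphere bounding a ball, and one checks (pushing $V_i$ into the ball) that $c_i$ would be nullhomotopic; if $\partial D$ is a $(p,q)$ slope with $q\neq 0$, then $q[c_i]=0$ in $\pi_1(S\times I)\cong\pi_1 S$, which is torsion-free, again forcing $[c_i]=0$. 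Atoroidality of $S\times I$ only tells you $\partial V_i$ is compressible \emph{somewhere}; it is already compressible inside $V_i$, so this gives nothing new about the exterior. Your compression argument therefore does not start, and the ``main obstacle'' you flag (which side the ball lies on) is actually downstream of a step that already fails. What is really needed to show that a short core in a slab is level is hyperbolic geometry, not just atoroidality: the paper invokes Otal's unknotting theorem for short geodesics in Kleinian surface groups \cite{otal1995nouage} (together with a fixed-fiber reduction to make the constants uniform across the infinitely many fibers), not Brock--Bromberg inflexibility, which enters the paper only to bound volume via Theorem~\ref{th:vol}.

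There are also untreated issues in the $d_i\neq 0$ case and in assembling the two cases. You posit a standard model $V_i\cong D^2\times S^1$ with $\pi|_{V_i}(z,\theta)=d_i\theta$, but $\pi|_{V_i}$ is a priori just a smooth map: its fibers need not be disks, and the fiber surface $S$ may intersect $V_i$ in annuli or more complicated pieces. Making all the cores simultaneously monotonic with respect to a single foliation by $S$-fibers (which is what the theorem asserts) is exactly the nontrivial global content. The paper achieves this by decomposing the complementary $I$-bundle $\FF_\beta=M\ssm\T_\beta$ using a Floyd--Oertel incompressible branched surface fully carrying the punctured fiber, by ruling out trivial components of the $I$-foliated part via ruled-surface area estimates in Margulis tubes (Lemmas~\ref{meridians in W} and \ref{no trivial components}), by establishing transverse orientability (Lemma~\ref{transverse orientation}), and then by applying a Waldhausen-type unknotting lemma for subproducts (Lemma~\ref{sub products}) to extend the $I$-foliation to a product structure (Proposition~\ref{Align foliations}). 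None of this machinery appears in your argument, and the cohomological dichotomy $d_i\neq 0$ versus $d_i=0$, while a reasonable first guess, does not by itself produce the required simultaneous isotopy.
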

In this theorem, {\em sufficiently long fillings} refers to the set of Dehn fillings of the manifold $M$ whose filling slopes exclude finitely many slopes on each boundary component; see Section~\ref{sec:fibered_filling}.
Example~\ref{ex:2structures} in \S\ref{S:fiber theorem proofs} below shows that when a Dehn filling fibers in multiple ways, even though the $1$--manifold $\C$ is the same for all fibers, the decomposition $\C = \C_\ell \sqcup \C_t$ depends on the particular fiber chosen, even over a single {\em fibered face} (see \S\ref{S:Thurston-Fried}).

\subsection{Outlines}
\label{sec:outlines}

The proofs of Theorem~\ref{T : twisted Linch bound 2} and Theorem~\ref{th:main_wp} are essentially independent.  The first half of the paper is devoted to the latter, while the second half to the former. 

In Section \ref{sec:background}, we recall the definition of the Weil--Petersson metric on
Teichm\"uller space and its connection to hyperbolic volume. We also review the
Floyd--Oertel branched surfaces, which play a central role in the proof of Theorem
\ref{th:structure}. Section \ref{sec:top} then establishes a few important
facts from $3$-manifold topology. These are needed in Section \ref{sec:fibered_filling}
where Theorems \ref{th:main_wp} and \ref{th:structure} are proven using a combination of  branched surface theory and hyperbolic geometry.

\subsubsection*{Proof of Theorem \ref{th:structure} (outline)}
Let $M_\beta$ be a fibered filling of $\cM$ (the index $\beta$ is a tuple of Dehn filling
parameters on the boundary components, described in Section \ref{S:fill complexity}) and
let $S_\beta$ be a fiber. To simplify the discussion, assume that $M_\beta$, and hence
$S$, has empty boundary. 
We cut $M$ along $S_\beta$ to produce a product I-bundle $\FF_\beta \cong S_\beta \times [0,1]$.  The goal is to show, for sufficiently long fillings
$\beta$, and suitably chosen $S_\beta$ in its isotopy class, that the cores of
the filling solid tori of $M_\beta$, when intersected with $\FF_\beta$, are
vertical arcs $x\times[0,1]$ and level curves $c\times\{t\}$.

Our tool for this is a decomposition of the product $\FF_\beta$ into an {\em $I$-foliated part}
and a {\em bounded part} $X_\beta$, with these properties: 

\begin{itemize}
\item The $I$-foliated part is foliated by intervals, and contains as a subproduct the intersections with $\FF_\beta$
of the filling tubes that intersect $S_\beta$.
\item  The bounded part contains the filling tubes disjoint from $S_\beta$, which we call
  the {\em floating tubes} $V^F_\beta$. The complement $X = X_\beta \ssm V^F_\beta$ is 
 one of a {\em finite   collection} of submanifolds of $\cM$, which exists independently
 of $\beta$. 
\end{itemize}

\begin{figure}[htbp]
\begin{center}
\includegraphics[width = .9 \textwidth]{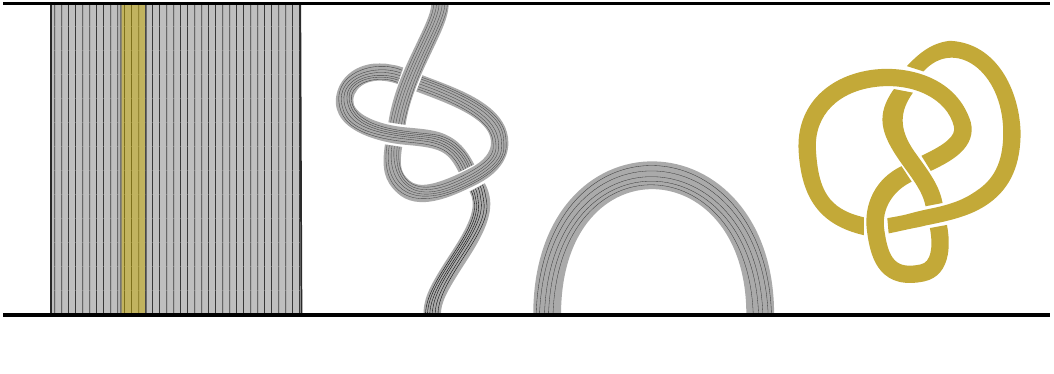}
\caption{The decomposition of the $I$-bundle $\FF_\beta$. The $I$-foliated part is
  indicated in gray and the tubes in yellow. }
\label{FXissues}
\end{center}
\end{figure}

Figure \ref{FXissues} indicates this decomposition schematically, as well as the three
basic obstructions to completing the argument: 

\begin{enumerate}
\item Knotting of the $I$-foliated part
\item Nonorientability of the $I$-foliation: a component of the $I$-foliated part whose fibers have both
  endpoints on the {\em same} component of $\boundary\FF_\beta$.
\item Knotting of the floating tubes 
\end{enumerate}

The construction comes from the Floyd-Oertel theory of branched surfaces. After an isotopy of $S_\beta$ in
$M_\beta$ to minimize a certain complexity function, $\cS_\beta = S_\beta \cap \cM$ is a properly embedded
essential surface in $\cM$ and so is fully carried by one of finitely many incompressible branched
surfaces $\B$, as discussed in Section \ref{sec:branched}. Such a branched surface has a
regular fibered neighborhood $\N$ and its complement in $\cM$ is our desired region $X$. 
After carefully arranging $S_\beta$ within $\N$ the $I$-foliated regions are obtained from
the $I$-fibration of $\N$, and $X_\beta $ is $X$ union the floating solid tori. 
Moreover,
$\boundary X$ decomposes as a vertical part $\boundary_v X$
which inherits the $I$-foliation on the corresponding part of $\boundary \N$, and
a horizontal part $\boundary_h X$ which lies in $\boundary\FF_\beta\intersect \boundary\N$.
This construction of $X_\beta$ and the foliation $\I$ is carried out in detail in Sections \ref{S:fill complexity}--\ref{S:product regions}.

The first major goal is to show that, for sufficiently long fillings $\beta$, the
$I$-foliation of $\FF_\beta$ in the complement of $X_\beta$ extends to agree, up to
isotopy, with the product foliation. The proof of this is completed in
Proposition \ref{Align foliations},
and requires the resolution of obstructions (1) and (2).

For the first, knottedness of the foliated part, we have Lemma \ref{sub products} which
says that a side-preserving embedding of $E\times[0,1]$ into $S\times[0,1]$ is unknotted
(isotopic to a standard embedding) when $E$ is not homotopically trivial. In order to
apply this we show, in Lemma \ref{no trivial components}, that for
sufficiently long fillings $\beta$ the components of the fibered part
are indeed homotopically trivial. 
The main idea, carried out in Lemma \ref{meridians in W}, is that, since $\boundary_h X$ is among a fixed finite
collection of surfaces in $\cM$, any bounded-complexity component of
$\boundary \FF_\beta \ssm \boundary_h X$ has a bounded-area homotope
in each $M_\beta$, which is used to rule out intersections with the
filling solid tori when meridians are long. 
We use this to show in Lemma \ref{no trivial components} that trivial
regions in $S \times \{0,1\}$ correspond to {\em disks of contact} for
$\B$, contradicting incompressibility of the branched surface.

The second obstruction, the possibility that components of the
foliated part have both ends on the same component of $\boundary
\FF_\beta$, is handled in Lemma \ref{transverse orientation} using the
minimal complexity assumption on the choice of $S$ within its isotopy
class.  

This allows us to prove Proposition \ref{Align foliations}, and in 
particular, we see that $X_\beta$ is itself a subproduct of
$\FF_\beta$. 

We are left with the third obstruction: showing 
that the floating solid tori are level in $X_\beta$. This is
accomplished by a reduction to a theorem of Otal, which states that
sufficiently short curves in a Kleinian surface group are level
curves. The constants in this theorem depend on the genus of the
surface, so to obtain the needed uniformity we consider a fixed fiber
surface from one fillings which, after puncturing along those solid
tori that meet the branched surface, can be embedded simultaneously in
all the fibered fillings, and represents a fiber in each of them.
This means, for sufficiently long fillings, that the cores of the
floating tori are sufficiently short to apply Otal's theorem with
respect to this fiber. A short topological argument then implies that
these curves are level with respect to all of the fiberings.

\subsubsection*{Proof of Theorem \ref{T : twisted Linch bound 2} (outline)} 
Theorem~\ref{T : twisted Linch bound 2} and its several corollaries concerning WP translation length and twisting are proven in Section \ref{sec:estimates}. The argument uses a fibered version of Dehn surgery on the mapping torus in order to twist about the curve $\alpha$. Informally, we start with the singular solv structure on the mapping torus of the pseudo-Anosov homeomorphism $\phi$, locate a solid torus foliated by flat annuli about $\alpha$, and replace this solid torus with one that performs the desired twisting while affecting the WP translation length of the new monodromy in a controlled manner. To do this, we show in Section \ref{S:constructions and estimates} that there exists a solid torus (the one used in the filling) with a \emph{leaf-wise conformal structure} that carries out the required twisting while moving a bounded distance in the WP metric. That section concludes by showing that in the singular solv structure on the mapping torus of $\phi$, one can indeed find a sufficiently large solid torus about $\alpha$ to drill out which is foliated by flat annuli. The proof of Theorem~\ref{T : twisted Linch bound 2} shows that replacing this solid torus with the one found in Section \ref{S:constructions and estimates} has the necessary effect on the WP translation length of the new monodromy. Section~\ref{sec:estimates} concludes by constructing some explicit examples (for example, proving Corollary~\ref{C: infinitely many}), as well as strengthening the construction to produce  homeomorphisms with bounded normalized WP translation length from pseudo-Anosovs over a fibered face of essentially any fixed manifold.

\bigskip

\noindent
{\bf Acknowledgements.} We would like to thank Saul Schleimer for useful conversations and
the Fall 2016 program at MSRI, where this work began. 
We also thank Yue Zhang for his comments on an earlier draft of the paper.
Leininger is partially supported by
NSF grants  DMS-1510034, DMS-1811518, as well as DMS 1107452, 1107263, 1107367 ``RNMS:
GEometric structures And Representation varieties" (the GEAR Network). Taylor is partially
supported by NSF grants DMS-1400498 and DMS-1744551. Minsky is partially supported by NSF
grant DMS-1610827.

\section{Background}
\label{sec:background}

Here we recall some basic background on the Weil-Petersson metric and branched surfaces.
\subsection{The Weil-Petersson metric on $\T(S)$}

The Teichm\"uller space $\T(S)$ of the surface $S$ is the space of marked hyperbolic structures on $S$, i.e. pairs $(X,f)$
where $X$ is a hyperbolic surface and $f \colon S \to X$ is a homeomorphism, up to the equivalence that identifies $(X,f)$ and $(Y,h)$ if there is an isometry $\Psi \colon X \to Y$ such that $\Psi$ is homotopic to $h \circ f^{-1}$.

We will primarily be interested in the Weil--Petersson (WP) metric on $\T(S)$. The cotangent space of $\T(S)$ at $X$ is naturally identified with the space ${\mathcal Q}(X)$ of (integrable) holomorphic quadratic differentials $q(z) dz^2$ on $X$, and the WP conorm on ${\mathcal Q}(X)$ is given by 
\[
\Vert\varphi\Vert_{wp}^2=\int_X \frac{\varphi \overline\varphi} {\sigma^{2}},
\]
where $\sigma = \lambda(z) |dz|$ is the hyperbolic metric on $X$. For $\mu = \mu(z) \frac{d \bar z}{d z} \in \belt(X)$, an infinitesimal $L^\infty$ Beltrami differential on $X$, representing a tangent vector to $\T(S)$ at $X$, its WP norm is defined using the pairing of Beltrami and quadratic differentials:
\[
\Vert\mu\Vert_{wp}=\max_{\varphi}\frac{\left\vert\int_X\mu\varphi\right\vert}{\Vert\varphi\Vert_{wp}}
\]
where the max is taken over all non-zero $\varphi \in {\mathcal Q}(X)$.
 The WP distance function $d_{wp}(X,Y)$ between $X, Y \in \T(S)$ is then defined in the usual way as the infimal length of paths joining $X$ and $Y$. 
(For additional background, see \cite{wolpert1987geodesic}.)

The mapping class group $\Mod(S)$ of $S$ acts on $(\T(S), d_{wp})$ by isometries, and for $\phi \in \Mod(S)$ the \emph{Weil--Petersson translation length} of $\phi$ is 
\[
\wp{\phi} = \inf _{X \in \T(S)} d_{wp}(X, \phi(X)).
\]
In the same manner we can define the Teichm\"uller translation length $\Vert \phi \Vert_T$ of $\phi$, which is equal to $\log(\lambda)$ when $\phi$ is pseudo-Anosov with dilatation $\lambda$.

\subsection{Bounds for the WP metric and volume}
\label{S:WP bounds} 

We will need the fact that the Weil-Petersson metric on the tangent space of $\T(S)$ is bounded above by the $L^2$ metric on the space of infinitesimal Beltrami differentials $\belt(S)$, with respect to the hyperbolic area form.

\begin{lemma}\label{lem wp}
Let $S$ be a closed Riemann surface uniformized by the hyperbolic metric $\sigma$. We have
$$\Vert\mu\Vert_{wp}\le
\sqrt{\int_S\vert \sigma\mu\vert^2}$$
for every $\mu \in \belt(S)$.
\end{lemma}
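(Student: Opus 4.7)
The plan is to derive the bound as a direct application of the Cauchy--Schwarz inequality, exploiting the fact that both the Weil--Petersson conorm on $\mathcal{Q}(X)$ and the $L^2(dA_\sigma)$ norm on $\belt(S)$ are built from the same hyperbolic density $\sigma$.

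By the definition of the WP norm on tangent vectors given just above the lemma, it is enough to prove that for every non-zero $\varphi \in \mathcal{Q}(X)$,
\[
\left| \int_S \mu \varphi \right| \;\le\; \|\varphi\|_{wp} \cdot \sqrt{\int_S |\sigma \mu|^2},
\]
for then dividing by $\|\varphi\|_{wp}$ and taking the supremum over $\varphi$ yields the stated inequality.

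To verify this, I would pass to a local conformal coordinate $z$ with $\sigma = \lambda(z)|dz|$, so that the hyperbolic area form is $dA_\sigma = \lambda^2\, dx\,dy$. In such a chart $\mu$ and $\varphi$ are represented by functions $\mu(z)$ and $\varphi(z)$, and the globally defined $(1,1)$-form $\mu\varphi$ agrees (up to a universal constant absorbed into the orientation convention for $\int_S$) with $\mu(z)\varphi(z)\,dx\,dy$. I split this integrand as
\[
\mu(z)\varphi(z)\,dx\,dy \;=\; \bigl[\lambda(z)\mu(z)\bigr] \cdot \bigl[\varphi(z)/\lambda(z)\bigr]\,dx\,dy,
\]
and apply the ordinary Cauchy--Schwarz inequality in $L^2(dx\,dy)$ to obtain
\[
\left|\int_S \mu\varphi\right| \;\le\; \left(\int_S |\mu|^2\, \lambda^2\,dx\,dy\right)^{1/2}\left(\int_S \frac{|\varphi|^2}{\lambda^2}\,dx\,dy\right)^{1/2}.
\]
The first factor is $\sqrt{\int_S |\mu|^2\, dA_\sigma} = \sqrt{\int_S |\sigma\mu|^2}$, and the second is $\|\varphi\|_{wp}$ by the conorm formula, giving the displayed inequality.

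I do not expect any serious obstacle: the argument is essentially one line of Cauchy--Schwarz. The only bookkeeping is to confirm that the local formula $\mu(z)\varphi(z)\,dx\,dy$ patches to a well-defined integral of the $(1,1)$-form $\mu\varphi$ (standard, since $\mu\varphi$ is constructed invariantly from sections of tensor bundles) and that the factors of $\lambda$ are distributed so that one side produces the $L^2(dA_\sigma)$-norm of $\mu$ and the other side produces the WP conorm of $\varphi$ as written. Once this is in place the lemma follows immediately; note also that the argument did not use closedness of $S$ beyond the absolute convergence of the integrals involved.
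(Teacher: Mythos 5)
Your proof is correct and is essentially identical to the paper's own argument: both reduce the claim to the bound $\left|\int_S \mu\varphi\right| \le \|\varphi\|_{wp}\sqrt{\int_S|\sigma\mu|^2}$, obtained by writing $\mu\varphi = (\sigma\mu)(\varphi/\sigma)$ and applying Cauchy--Schwarz, then take the supremum over $\varphi$ in the definition of $\|\mu\|_{wp}$. The local-coordinate bookkeeping you spell out is implicit in the paper's more compact notation $\int_S \sigma\mu\,\varphi\sigma^{-1}$, but the decomposition and the use of Cauchy--Schwarz are the same.
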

\begin{proof}
 
Fix $\mu \in \belt(S)$.  From Cauchy-Schwartz, we get that for every $\varphi \in {\mathcal Q}(S)$,
$$\left\vert\int_S\mu\varphi\right\vert=\left\vert\int_S\sigma\mu\varphi\sigma^{-1}\right\vert
\le\left(\int_S\vert\sigma\mu\vert^2\right)^{\frac 12}\cdot\left(\int_S\vert\varphi\sigma^{-1}\vert^2\right)^{\frac 12} = \Vert \varphi \Vert_{wp} \left(\int_S\vert\sigma\mu\vert^2\right)^{\frac 12} .$$
It thus follows that
$$\Vert\mu\Vert_{wp}=\max_\varphi\frac{\left\vert\int_S\mu\varphi\right\vert}{\Vert\varphi\Vert_{wp}}\le\max_\varphi\frac{\left(\int_S\vert\sigma\mu\vert^2\right)^{\frac 12}\cdot\Vert\varphi\Vert_{wp}}{\Vert\varphi\Vert_{wp}}=\left(\int_S\vert\sigma\mu\vert^2\right)^{\frac 12}$$
as we had claimed.
\end{proof}

The proof of this lemma includes the basic application of the Cauchy-Schwarz inequality used in the proof of the following result first observed by Linch \cite{Linch}.  We give the proof to illustrate this.

\begin{theorem} \label{th:linch}
We have $\displaystyle{\Vert v \Vert_{wp} \leq \Vert v \Vert_{T} \sqrt{\Area(S)} = \Vert v \Vert_{T} \sqrt{2 \pi |\chi(S)|}}$,
for any tangent vector $v$ to Teichm\"uller space $\CT(S)$.
\end{theorem}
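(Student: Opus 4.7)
The plan is to combine Lemma~\ref{lem wp} with the simple observation that the Teichm\"uller norm of a tangent vector controls the pointwise $L^\infty$-size of any representing Beltrami differential, and then to estimate the remaining $L^2$-integral against hyperbolic area. Concretely, fix $X \in \T(S)$ and a tangent vector $v$, and choose a Beltrami differential $\mu \in \belt(X)$ representing $v$ with $\|\mu\|_\infty$ equal to (or arbitrarily close to) the Teichm\"uller norm $\|v\|_T$, as this is the infinitesimal description of the Teichm\"uller metric.

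Next I apply Lemma~\ref{lem wp} to get
\[
\|v\|_{wp} \;=\; \|\mu\|_{wp} \;\le\; \sqrt{\int_S |\sigma \mu|^2}.
\]
The integrand $|\sigma\mu|^2$ in local coordinates is $\lambda(z)^2 |\mu(z)|^2 |dz|^2$, so the area form $dA$ of the hyperbolic metric $\sigma = \lambda|dz|$ appears naturally:
\[
\int_S |\sigma \mu|^2 \;=\; \int_S |\mu|^2 \, dA \;\le\; \|\mu\|_\infty^2 \cdot \Area(S).
\]
Combining these two inequalities yields $\|v\|_{wp} \le \|v\|_T\, \sqrt{\Area(S)}$.

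Finally, the Gauss-Bonnet theorem for a complete hyperbolic surface gives $\Area(S) = 2\pi |\chi(S)|$, producing the second equality in the statement.

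I do not expect any substantial obstacle: the work is essentially packaged in Lemma~\ref{lem wp} (which is where Cauchy--Schwarz is used), and the remaining step is a pointwise $L^\infty$-versus-$L^2$ bound together with Gauss--Bonnet. The only minor subtlety is to be careful that the convention for $\|v\|_T$ used here is the one making $\|\phi\|_T = \log \lambda$ for pseudo-Anosov $\phi$ with dilatation $\lambda$, which is precisely the convention under which $\|v\|_T$ agrees with the $L^\infty$-norm of the extremal Beltrami representative; any other convention would only alter the statement by a harmless multiplicative constant absorbed elsewhere in the paper.
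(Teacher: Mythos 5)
Your proof is correct and follows essentially the same route as the paper: pick an extremal Beltrami representative $\mu$ with $\|\mu\|_\infty = \|v\|_T$, apply Lemma~\ref{lem wp}, pull out the $L^\infty$-norm of $\mu$ from the $L^2$-integral, and identify $\int_S \sigma^2$ with $\Area(S) = 2\pi|\chi(S)|$ via Gauss--Bonnet.
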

\begin{proof} For any tangent vector $v$ to $\CT(S)$ at a point $[X]$, let $\mu \in \belt(X)$ be an infinitesimal Beltrami differential representing $v$ so that $\Vert v \Vert_T = \Vert \mu \Vert_\infty$.  Then by Lemma~\ref{lem wp} we have
\[ \Vert v \Vert_{wp} = \Vert \mu \Vert_{wp} \leq \sqrt{\int_S\vert \sigma\mu\vert^2} \leq \Vert \mu \Vert_\infty \sqrt{\int_S\vert \sigma \vert^2} = \Vert v \Vert_T \sqrt{\Area(S)}.
\qedhere
 \]
\end{proof}
In particular, this immediately implies that for any pseudo-Anosov $\phi \colon S \to S$, the translation lengths satisfy the following:
\[ \Vert \phi \Vert_{wp} \leq \sqrt{2 \pi |\chi(S)|} \Vert \phi \Vert_T.\]

\begin{remark}
Here is a slightly more conceptual way of explaining the inequalities in Lemma \ref{lem wp} and Theorem \ref{th:linch}.
The $L^p$ norm with respect to the hyperbolic metric can be defined for $\mu$  and for
$\varphi/\sigma^2$, where $\mu \in \belt$ is an infinitesimally trivial Beltrami differential and $\varphi \in {\mathcal Q}$ is a holomorphic quadratic
differential, because both quantities have a well-defined pointwise norm. We write these
as follows: 
$$
\Vert \mu \Vert_p = \left(\int |\mu|^p \sigma^2 \right)^{1/p}
\quad \text{ and } \quad
\Vert \varphi \Vert_p = \left(\int |\varphi/\sigma^2|^p \sigma^2 \right)^{1/p}.
$$
Note that $\Vert \varphi \Vert_1$ is exactly $\int|\varphi|$, which is the usual $L^1$ norm on ${\mathcal Q}$, and $\Vert \varphi \Vert_2$ is the Weil-Petersson conorm as
defined above. 

The usual pairing $\int\mu\varphi$ between $\belt$ and ${\mathcal Q}$ can be written
$$\langle \mu,\varphi \rangle = \int \mu(\varphi/\sigma^2) \sigma^2 $$ and 
the $L^p$ norm on ${\mathcal Q}$ induces a dual (semi)norm on $\belt$ via the pairing, namely
$$
\Vert \mu \Vert_{p*} \equiv \sup\{ |\langle \mu,\varphi \rangle| :   \Vert \varphi \Vert_p = 1\}.
$$
So $\Vert \mu \Vert_{1*}$ is exactly the Teichm\"uller norm $\Vert\mu\Vert_T$, and $\Vert\mu\Vert_{2*}$ is the
Weil-Petersson norm $\Vert\mu\Vert_{wp}$. Now
Cauchy-Schwartz applied to ${\mathcal Q}$ gives
$$
\Vert\varphi\Vert_1 \le \Vert\varphi\Vert_2 \sqrt{\Area(S)},
$$
and the definition of $\Vert\cdot\Vert_{1*}$ and $\Vert\cdot\Vert_{2*}$ above shows that
\begin{equation}
\Vert\mu\Vert_{wp} \le \Vert\mu\Vert_T \sqrt{\Area(S)}. \tag{\emph{Theorem \ref{th:linch}}}
\end{equation}
Alternatively applying Cauchy-Schwartz to the pairing $\langle \cdot,\cdot\rangle$ and the $L^2$ norms on both $\belt$ and ${\mathcal Q}$ gives
$$
|\langle \mu,\varphi \rangle| \le  \Vert\mu\Vert_2 \Vert\varphi\Vert_2,
$$
and so
\begin{equation}
\Vert\mu\Vert_{wp} = \Vert\mu\Vert_{2*} \le \Vert\mu\Vert_2.  \tag{\emph{Lemma \ref{lem wp}}}
\end{equation} 

\end{remark}

\medskip

When $\phi$ is pseudo-Anosov, the associated mapping torus $M_\phi$ is hyperbolic by Thurston's geometrization theorem for fibered manifolds \cite{thurston1998hyperbolic, otal2001hyperbolization}.
We will need the following result due to Brock--Bromberg \cite{brock2016inflexibility} and Kojima--McShane \cite{kojima2018normalized}, building on work of
Krasnov--Schlenker \cite{krasnov2008renormalized} and
Schlenker \cite{schlenker2013renormalized}, which relates the volume of $M_\phi$ to the WP translation length of $\phi$.

\begin{theorem}\label{th:vol}
Let $\phi\colon S \to S$ be pseudo-Anosov. Then 
\[
\vol(M_\phi) \le \frac{3}{2}\sqrt{2\pi |\chi(S)|)} \; \wp{\phi}.
\]
\end{theorem}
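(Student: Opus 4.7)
The plan is to realize the volume $\vol(M_\phi)$ as an asymptotic average of convex core volumes of quasi-Fuchsian manifolds along the Weil--Petersson axis of $\phi$, and to bound those via the renormalized volume machinery of Krasnov--Schlenker and Schlenker that underlies the cited works.

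The two analytic ingredients I would invoke are: (i) Schlenker's comparison $V_C(Q)\le V_R(Q)$ between convex core volume and renormalized volume of a quasi-Fuchsian manifold $Q$; and (ii) the Krasnov--Schlenker first variation formula for $V_R$, which expresses the Weil--Petersson differential of $V_R$ as a pairing against the Schwarzian derivative of the conformal boundary uniformization. Combining (ii) with Nehari's universal $L^\infty$ bound on the Schwarzian and with exactly the Cauchy--Schwarz argument used in Lemma~\ref{lem wp} and Theorem~\ref{th:linch}, one obtains a bound on the WP gradient of $V_R$ on each Teichm\"uller factor of quasi-Fuchsian space of the form $c_0\sqrt{2\pi|\chi(S)|}$ for an explicit universal constant $c_0$.

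Fix $X_0\in\T(S)$ on the Weil--Petersson axis of $\phi$, so that $d_{wp}(\phi^{-n}(X_0),\phi^n(X_0))=2n\wp{\phi}$ for every $n\ge 1$, and set $Q_n := Q(\phi^{-n}(X_0),\phi^n(X_0))$. Integrating the gradient bound from (ii) along Weil--Petersson geodesics from the Fuchsian locus (where $V_R$ vanishes) to each end of $Q_n$, and combining with (i), yields
\[
V_C(Q_n)\;\le\;V_R(Q_n)\;\le\;3\sqrt{2\pi|\chi(S)|}\cdot n\wp{\phi}
\]
once the constants are carefully tracked. To convert this into a bound on $\vol(M_\phi)$, I would then realize the infinite cyclic cover $\widetilde M_\phi \cong S\times\R$ as a geometric limit of the $Q_n$ based at a suitable lift of $X_0$; since $\langle\phi\rangle$ acts cocompactly on $\widetilde M_\phi$ with quotient $M_\phi$, the convex core of $Q_n$ contains $2n$ near-isometric copies of a fundamental domain for this action together with two boundary collars of uniformly bounded volume, giving $V_C(Q_n)\ge 2n\vol(M_\phi)-O(1)$. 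Dividing by $2n$ and letting $n\to\infty$ produces $\vol(M_\phi)\le \tfrac{3}{2}\sqrt{2\pi|\chi(S)|}\wp{\phi}$.

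The main obstacle is the last step: one must rule out the possibility that the boundary collars, where the geometry of $Q_n$ deviates from that of $\widetilde M_\phi$, absorb a definite fraction of the convex core volume. This is precisely where the Brock--Bromberg inflexibility theorem is essential, since it shows that the hyperbolic metrics on $Q_n$ stabilize exponentially fast away from the ends, so that the cumulative volume discrepancy near the frontier of the convex core is uniformly bounded in $n$ and is absorbed in the $O(1)$ error above.
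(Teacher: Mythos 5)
The paper does not supply a proof of Theorem~\ref{th:vol}: it is stated as a result of Brock--Bromberg and Kojima--McShane, built on the renormalized-volume machinery of Krasnov--Schlenker and Schlenker, and is used as a citation. Your sketch is a faithful reconstruction of the strategy in those references: compare convex core volume $V_C$ to renormalized volume $V_R$, bound the Weil--Petersson gradient of $V_R$ via the Krasnov--Schlenker variational formula together with the Kraus--Nehari Schwarzian bound and the Cauchy--Schwarz argument of Lemma~\ref{lem wp}, and then use geometric convergence of the quasi-Fuchsian manifolds $Q_n$ to the infinite cyclic cover together with Brock--Bromberg inflexibility to count fundamental domains inside the convex core with uniformly bounded error. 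Two points you would need to nail down in a full write-up: the existence of the Weil--Petersson axis of $\phi$, which you invoke at the outset, is itself a theorem (Daskalopoulos--Wentworth, Wolpert) --- alternatively one can work from an arbitrary basepoint and absorb the resulting discrepancy into the $O(1)$ error; and the coefficient $3$ in your intermediate bound $V_R(Q_n)\le 3\sqrt{2\pi|\chi(S)|}\,n\,\wp{\phi}$ must be matched exactly against Schlenker's gradient estimate, since this is precisely the step controlling the sharp constant $\tfrac{3}{2}\sqrt{2\pi|\chi(S)|}$ in the final inequality.
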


\subsection{Fibrations of a fixed $3$--manifold} 
\label{S:Thurston-Fried}

Suppose that $M$ is a compact, orientable $3$--manifold (with possibly non-empty boundary) and that $M$ fibers over the circle $f \colon M \to \BS^1 \cong \mathbb R /\mathbb Z$ with fiber a compact, connected, oriented surface $S$.  Poincar\'e-Lefschetz Duality and the deRham Theorem provide isomorphisms (for (co)homology with real coefficients), $H_2(M,\partial M) \cong H^1(M) \cong H^1_{dR}(M)$.  Via this isomorphism, the homology class of the fiber $S$ is identified with the cohomology class represented by the closed $1$--form $\omega = f^*(dt)$, where $dt$ is a nonwhere zero $1$--form defining the orientation on $\BS^1$.  Note that $\omega$ is nowhere zero, as is the restriction to $\partial M$, and there is a neighborhood $U \in H^1(M;\R)$ of $[\omega]$ so that every element is represented (in deRham cohomology) by such a closed $1$--form.  Any primitive integral class $\omega'$ representing a class in $\R_+ U$ can be integrated to define another fibration $f' \colon M \to \BS^1$ whose fiber $S'$ is identified with $[\omega']$ (via the above isomorphism); see \cite{Tischler}.

This construction gives rise to infinitely many fibrations of $M$ as long as $b_1(M) > 1$, which happens precisely when the monodromy $\phi \colon S \to S$ has a nontrivial fixed cohomology class.  Indeed, the subspace of $H^1(S)$ fixed by $\phi$ is precisely the image of the homomorphism $H^1(M) \to H^1(S)$ induced by inclusion, with the kernel generated by the dual of $[S]$ (because these are the classes in $H^1(S)$ that extend to $H^1(M)$).

Thurston proved that the maximal connected neighborhood $U$ of $[S] = [\omega]$ as in the previous paragraph has a particularly nice description.  To state his result, we recall that in \cite{thurston1986norm}, Thurston constructs a norm $\mathfrak n$ on $H^1(M)$, the {\em Thurston norm}, when $M$ is irreducible and atoroidal, so that the unit ball ${\bf B}$ of $\mathfrak n$ is a polyhedron.

\begin{theorem} [Thurston] \label{T:fibered face} If $S$ is a fiber of the compact, orientable, irreducible, atoroidal $3$--manifold $M$, then there is a top-dimensional face $F$ of ${\bf B}$ so that $[S] \in \R_+ \mathrm{int}(F)$ and every element of $\R_+ \mathrm{int}(F)$ is represented by a closed $1$--form which is nowhere vanishing on $M$ or $\partial M$.  Moreover, any primitive integral point of $\R_+ \mathrm{int}(F)$ determines a fibration of $M$ with fiber $S'$, and $\mathfrak n([S']) = -\chi(S')$.
\end{theorem}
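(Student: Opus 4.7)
The plan is to decompose the theorem into three pieces: \textbf{(a)} the set of classes in $H^1(M;\R)$ represented by some nowhere-vanishing closed $1$-form is open; \textbf{(b)} every primitive integral such class is the class of a fiber of some fibration of $M$, and that fiber is Thurston-norm minimizing, so that $\mathfrak{n}([S']) = -\chi(S')$; \textbf{(c)} on the open cone of such classes, $\mathfrak{n}$ is the restriction of a linear functional on $H^1(M;\R)$. Granting these, a convex norm that agrees with a linear functional on an open cone must have that cone lie in the open cone on a single top-dimensional face of its unit ball; so the component of the open cone containing $[S]$ is exactly $\R_+ \cdot \mathrm{int}(F)$ for some top-dimensional face $F$ of $\mathbf B$.

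For (a), the starting fibration $f\colon M\to\BS^1$ gives the nowhere-vanishing closed $1$-form $\omega_0 = f^*(dt)$ representing $[S]$, and the nowhere-vanishing condition is $C^0$-open, so every deRham class in a neighborhood of $[\omega_0]$ is represented by some nowhere-vanishing closed $1$-form. Tischler's observation then shows that any primitive integral class so represented is the class of a fiber of a fibration: perturb a representative to have rational (hence, by scaling, integral) periods while preserving the nowhere-vanishing condition, then integrate to obtain a submersion to $\BS^1$.

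For (b), the point is that a fiber minimizes the Thurston norm in its homology class. I would pass to the infinite cyclic cover $\widetilde{M}\to M$ determined by the primitive class $[S']$; the fiber lifts to a compact separating surface $\widetilde{S'}$, and any incompressible norm-minimizing representative $S''$ of $[S']$ also admits a compact lift by a transversality argument using the nowhere-vanishing form $\omega'$ and the flow it determines. A standard cut-and-paste between $S''$ and translates of $\widetilde{S'}$, using incompressibility to eliminate trivial intersection circles and compressions without increasing $-\chi$, reduces to the case where $S''$ is a disjoint union of translates of the fiber, forcing $-\chi(S'') \ge -\chi(S')$ with equality only when $S''$ is isotopic to $S'$. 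This norm-minimality of fibers is the step I expect to be the main obstacle, since it requires genuine $3$-manifold topology rather than just homological or differential-topological input.

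For (c), the sum $a_1\omega_1 + a_2\omega_2$ of two nowhere-vanishing closed forms coming from fibrations with fibers $S_1, S_2$ is again closed and nowhere vanishing for positive integers $a_1, a_2$, and so by (a) and Tischler defines a fibration with fiber $S$ whose class is $a_1[S_1] + a_2[S_2]$. The kernel distribution $\ker(a_1\omega_1 + a_2\omega_2)$ restricts to the tangent bundle of the fiber, and its Euler class is locally constant in the $1$-form, so on a connected component of the fibered set these Euler classes coincide with a single fixed $e \in H^2(M;\R)$; pairing with the fiber gives $\chi(S) = \langle e, [S]\rangle$, a linear function of $[S]$. Combined with (b), this yields linearity of $\mathfrak{n}$ on the fibered cone. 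Finally, the resulting face $F$ of $\mathbf B$ is top-dimensional because the linear functional $-e$ realizes the norm on an open subset of $H^1(M;\R)$, which forces its supporting face to span a hyperplane of codimension one.
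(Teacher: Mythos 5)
The paper does not give a proof of this theorem; it is cited verbatim from Thurston's ``A norm for the homology of $3$-manifolds,'' so I am evaluating your sketch on its own terms rather than against an in-paper argument.

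Your decomposition into (a) openness, (b) norm-minimality of fibers, and (c) linearity of $\mathfrak n$ via the Euler class of the kernel plane field is the right skeleton, and (a) is fine. There are, however, two real gaps. First, in part (c) the assertion that $a_1\omega_1 + a_2\omega_2$ is nowhere vanishing whenever $\omega_1,\omega_2$ are nowhere-vanishing closed forms coming from fibrations is simply not true as stated: nothing prevents $\omega_1$ and $\omega_2$ from pointing in opposite directions at some point, and ``coming from fibrations'' does not constrain the choice of representative. This compatibility is exactly what one would need to prove, and it only comes for free when $[\omega_2]$ lies in a small neighborhood of $[\omega_1]$ where you may take $\omega_2 = \omega_1 + \eta$ with $|\eta|$ pointwise small (which is the content of (a) again, not an independent convexity argument). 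Your Euler-class argument for linearity on the connected component of the fibered cone is sound and does not need this convexity claim, so this is more a misstep than a fatal flaw, but as written it is false.

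The more serious gap is the logical jump in your opening paragraph: from ``the fibered cone containing $[S]$ lies inside $\R_+\,\mathrm{int}(F)$'' you conclude ``so the component of the open cone containing $[S]$ is \emph{exactly} $\R_+\,\mathrm{int}(F)$.'' Your argument establishes the containment $U \subseteq \R_+\,\mathrm{int}(F)$ (since $\mathfrak n$ restricted to the open fibered cone $U$ is the linear functional $\langle -e,\cdot\rangle$, and the locus where a norm coincides with a dual-ball linear functional is a cone over a face), but the theorem demands the reverse containment as well: that \emph{every} class in $\R_+\,\mathrm{int}(F)$ is represented by a nowhere-zero form. That is the heart of Thurston's theorem and requires a separate argument---for instance, a closedness argument inside $\R_+\,\mathrm{int}(F)$, or (as in Fried's approach, which the paper also cites for Theorem~\ref{T:entropy extension}) showing that the fibered cone is precisely the set of classes pairing positively with all homology directions of the suspension flow, which is an open convex rational-sided cone coinciding with the face cone. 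Without this piece your argument shows the fibered cone sits \emph{inside} a top-dimensional face cone, not that it fills it. As you anticipated, (b) is also the deepest step and your cut-and-paste sketch would need to be replaced by an actual argument (e.g., Waldhausen's theorem to identify an incompressible surface in $\widetilde S' \times \R$ with a level surface, or Thurston's Euler-class inequality for taut foliations), but you flagged this yourself.
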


A face $F$ of ${\bf B}$ as in this theorem is called a {\em fibered face}.  Note that the restriction of $\mathfrak n$ to $\R_+ \mathrm{int}(F)$ is linear (since $F$ is a face of ${\bf B}$).  In fact, $\mathfrak n$ is given by pairing with the negative of the Euler class of the $2$--plane bundle tangent to the foliation of $M$ by fibers $S$ of $f \colon M \to \BS^1$.
 
A fibered manifold $M$ with fiber $S$ of negative Euler characteristic is atoroidal if and only if the monodromy $\phi \colon S \to S$ is isotopic to a pseudo-Anosov homeomorphism.  In this case, Fried showed that the Teichm\"uller translation length (which is also equal to the topological entropy of the pseudo-Anosov homeomorphism) extends to a nice function on the cones over interiors of fibered faces; see \cite[Theorem F]{fried1982flow}.

\begin{theorem} [Fried] \label{T:entropy extension} For any compact, orientable, atoroidal manifold $M$ and fibered face $F$ of the Thurston norm ball ${\bf B}$, there is a continuous, convex, homogeneous function, $\mathfrak h \colon \R_+ \mathrm{int}(F) \to \R_+$, homogeneous of degree $-1$, such that if $S$ is a fiber of a fibration of $M$ with $[S] \in \R_+ \mathrm{int}(F)$ and monodromy $\phi \colon S \to S$, then $\Vert \phi \Vert_T = \mathfrak h(u)$.
\end{theorem}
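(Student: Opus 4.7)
The plan follows Fried's original approach via the suspension flow. First, pick a primitive integer class $u_0 \in \R_+ \mathrm{int}(F)$ with fiber $S_0$ and pseudo-Anosov monodromy $\phi_0$; realize $\phi_0$ by its pseudo-Anosov representative and form the suspension flow $\Phi_t$ on $M$, normalized so that the first-return time to $S_0$ equals $1$. Then $\Phi_t$ is a (singular) Anosov flow whose topological entropy equals $h_0 := \Vert \phi_0\Vert_T = \log\lambda(\phi_0)$. The key structural claim is that $\R_+ \mathrm{int}(F)$ coincides with the cone of cohomology classes represented by closed $1$-forms strictly positive on the flow vector field $X$ of $\Phi_t$. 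Combined with Theorem~\ref{T:fibered face}, this implies that for every primitive integer $u \in \R_+ \mathrm{int}(F)$, a positive representative $\omega$ of $u$ determines a cross-section $S_u$ of $\Phi_t$ whose first-return map is isotopic to the monodromy $\phi_u$ of the corresponding fibration. I expect this to be the main obstacle: it requires both the openness of the cone of positive classes and its identification with $\R_+ \mathrm{int}(F)$, which rests on the pseudo-Anosov dynamics of $\Phi_t$ together with Thurston's description of fibered faces.

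Once every fibration over $F$ is realized as a cross-section of the single flow $\Phi_t$, the function $\mathfrak h$ is extracted from one linear functional. Let $\mu$ denote the flow-invariant probability measure of maximal entropy for $\Phi_t$, and consider the closed $2$-current $\iota_X \mu$. For any closed $1$-form $\omega$ representing $u$ and positive on $X$, a Fubini-type computation along flow orbits yields
\[
\int_M \omega \wedge \iota_X \mu \;=\; \frac{1}{\bar r_u},
\]
where $\bar r_u$ is the mean return time to the cross-section $S_u$ determined by $\omega$, measured with respect to the induced Bowen--Margulis measure on $S_u$. Because $\iota_X \mu$ is closed, the left-hand side depends only on the class $u$ and is linear in $u$.

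By Abramov's formula applied to the measure of maximal entropy, the entropy of the return map satisfies $\log\lambda(\phi_u) = h_{top}(\phi_u) = h_0 \cdot \bar r_u$, and therefore
\[
\frac{1}{\Vert \phi_u \Vert_T} \;=\; \frac{1}{h_0}\int_M \omega \wedge \iota_X \mu
\]
for every primitive integer $u \in \R_+ \mathrm{int}(F)$. Define $\ell(u) := \tfrac{1}{h_0}\int_M \omega \wedge \iota_X \mu$; this is a positive linear functional on $\R_+ \mathrm{int}(F)$. Setting $\mathfrak h(u) := 1/\ell(u)$, the function $\mathfrak h$ is continuous and positive on $\R_+ \mathrm{int}(F)$, homogeneous of degree $-1$ (since $\ell$ is homogeneous of degree $+1$), convex (as the reciprocal of a positive linear functional on a convex cone), and agrees with $\Vert \phi_u \Vert_T$ at every primitive integer $u$, yielding the claimed function on all of $\R_+ \mathrm{int}(F)$. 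Beyond the main obstacle noted above, the remaining content reduces to the standard ergodic-theoretic toolkit (Abramov's formula, closed currents from flow-invariant measures) applied to the measure of maximal entropy.
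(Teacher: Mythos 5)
Your overall strategy---suspending a pseudo-Anosov monodromy to a flow, realizing every fibration over the face as a cross-section of that single flow, and then extracting $\mathfrak h$ from the return-time data via Abramov's formula---is Fried's strategy. But the crucial step where you pass from the flow's measure of maximal entropy to the topological entropy of the return map contains an error, and in fact your argument proves a statement stronger than Fried's that is false in general.

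Here is the gap. Let $\mu$ be the measure of maximal entropy for the suspension flow $\Phi_t$, normalized so that the return time to $S_0$ is constant and $h_\mu(\Phi)=h_{top}(\Phi)=h_0$. Let $\nu$ be the $\phi_u$-invariant probability measure that $\mu$ induces on the cross-section $S_u$ by disintegration, and let $r_u:S_u\to\R_{>0}$ be the return-time function. Abramov's formula gives
\[
h_0 = h_\mu(\Phi) = \frac{h_\nu(\phi_u)}{\int_{S_u} r_u\, d\nu},
\qquad\text{hence}\qquad
h_\nu(\phi_u) = h_0 \int_{S_u} r_u\, d\nu = h_0\,\bar r_u.
\]
Your argument then identifies $h_\nu(\phi_u)$ with $h_{top}(\phi_u)=\log\lambda(\phi_u)$. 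This is the error: the measure $\nu$ induced by the flow's MME is \emph{not} in general the measure of maximal entropy of the return map $\phi_u$. The flow MME corresponds to the section measure that maximizes the ratio $h_\nu(\phi_u)/\int r_u\,d\nu$ (i.e.\ an equilibrium state for the potential $-h_0\, r_u$), whereas $h_{top}(\phi_u)$ is the supremum of $h_\nu(\phi_u)$ alone. These coincide only when $r_u$ is cohomologous to a constant. In general one gets only the inequality $h_0\,\bar r_u \le \log\lambda(\phi_u)$, not equality. Consequently the functional $u\mapsto \int_M\omega\wedge\iota_X\mu$ does not compute $1/\Vert\phi_u\Vert_T$, and $1/\mathfrak h$ is \emph{not} a linear functional. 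Indeed Fried's Theorem F only asserts (and is only true) that $1/\mathfrak h$ is concave on $\R_+\mathrm{int}(F)$; computations via McMullen's Teichm\"uller polynomial show that it is genuinely nonlinear on typical fibered faces. Fried's actual proof obtains concavity and continuity by analyzing the growth rate of periodic orbits of $\Phi_t$ weighted by the class $u$ (via a dynamical zeta function / Dirichlet series whose abscissa of convergence gives $\mathfrak h(u)$), rather than by producing a single closed $2$-current whose pairing with $u$ is $1/\mathfrak h(u)$. Your first paragraph---identifying $\R_+\mathrm{int}(F)$ with the cone of classes positive on the flow and realizing each fiber as a cross-section---is correct and is indeed the structural input; the breakdown is only in the ergodic-theoretic second half.
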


These two theorems provide examples of pseudo-Anosovs with small Teichm\"uller translation length illustrating Penner's upper bound as follows (see \cite{mcmullen2000polynomial}).  Note that the product of the two function $\mathfrak n(\cdot) \mathfrak h(\cdot)$ is continuous and constant on rays.  In particular, if $K \subset \mathrm{int}(F)$ is any compact subset of the interior of a fibered face $F$, there is a constant $L_K$ that bounds the value of $\mathfrak n \mathfrak h$ on $\mathbb R_+ K$.  For any primitive integral point in $\mathbb R_+K$ representing a fiber with monodromy $\phi \colon S \to S$, we have
\[ |\chi(S)| \Vert \phi \Vert_T = \mathfrak n([S]) \mathfrak h([S]) \leq L_K.\]
In particular, supposing there are surfaces of all genera at least $2$ which are fibers representing elements in $\mathbb R_+ K$ (see e.g.~the proof of Corollary~\ref{C:specific infinitely many} below), then one finds examples of pseudo-Anosov homeomorphisms on every closed surface of genus at least $2$ in $\Psi_T(L_K)$.

\subsection{Incompressible branched surfaces}
\label{sec:branched}

Next, we recall the construction of the incompressible branched surfaces of Floyd--Oertel \cite{floyd1984incompressible}. Our discussion closely follows that of  Oertel \cite[Section 4]{oertel1984incompressible}), except that, as in Tollefson--Wang \cite[Section 6]{tollefson1996taut}, we describe the construction of these branched surfaces in terms of normal surfaces to a triangulation, rather than a handle decomposition. For background on normal surfaces, see \cite{haken1961theorie,jaco1984algorithm, tollefson1996taut}.

Let $M$ be a Haken $3$-manifold with incompressible boundary and a triangulation ${\bf t}$. 
The \emph{weight} $w(S)$ of a properly embedded surface $S$ in general position with ${\bf t}^{(1)}$ is the number of points in $S \cap {\bf t}^{(1)}$.  We recall that the minimal weight for $S$ within its isotopy class can be realized by a \emph{normal} representative since a minimal weight $S$ can be isotoped to be normal rel ${\bf t}^{(1)}$ \cite{jaco1989pl}.

For each incompressible normal surface $S$ with minimal weight in its
isotopy class, there is a \emph{branched surface fibered neighborhood}
$\widetilde \N_S$ produced as follows: $\widetilde \N_S$ is the union
of thickenings of the normal disks appearing in $S$ together with all
$3$-balls lying between two thickened normal disks of the same type.  
We choose compatible product structures on these thickened disks and $3$-balls so that the $I$--fibers (intervals) agree on the boundary of each tetrahedron. Hence, $\widetilde \N_S$ is foliated by $I$--fibers.
The corresponding \emph{branched surface} $\widetilde \B_S$ is the
2-complex obtained from $\widetilde \N_S$ by collapsing each of the
$I$--fibers to a point. (Usually one thinks of the branched surface as
properly embedded in $M$ with $\N$ a regular neighborhood of it; this won't be
crucial for us as we will work explicitly with the fibered
neighborhood itself.)
See Figure \ref{fig:normal}.

\begin{figure}[htbp]
\begin{center}
\includegraphics[ width = 1 \textwidth]{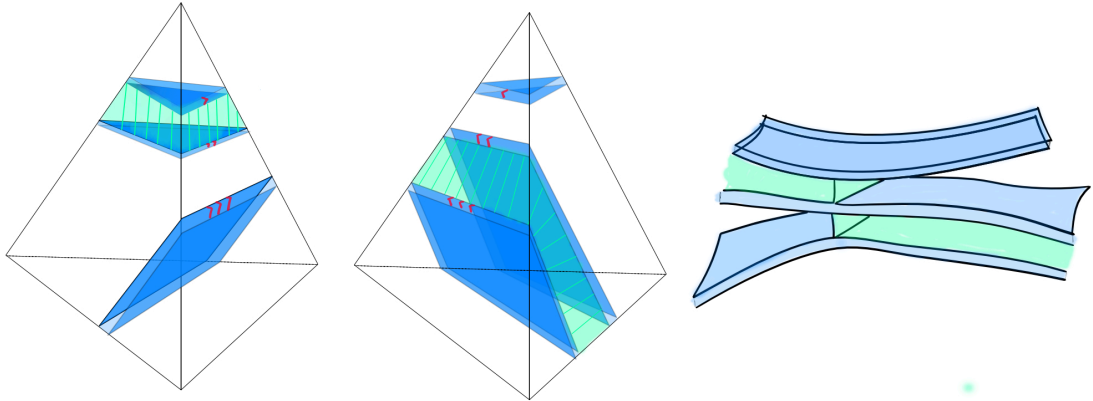}
\caption{Thickened normal disks (blue), fibered $3$-balls (green), and the branched surface fibered neighborhood.}
\label{fig:normal}
\end{center}
\end{figure}

For any branched surface fibered neighborhood $\N$, its boundary 
decomposes into a union of three subsurfaces, the horizontal boundary
$\partial_h \N$, the vertical boundary $\partial_v \N$, and $\N \cap
\partial M$. While $\partial_v \N$ is foliated by subintervals of
fibers, each $I$--fiber of $\N$ meets $\partial_h \N$ at its
endpoints. A surface $F$ in $M$ is {\em carried by $\B$} if it can be
properly isotoped into the interior of a fibered neighborhood $\N = \N(\B)$ of
$\B$ so that it intersects each fiber transversely. It is {\em fully
  carried} if in addition it has nonempty intersection with each
fiber. For example, $S$ is fully carried by $\widetilde N_S$ by
construction. 

Floyd--Oertel define a branched surface $\B$ to be \emph{incompressible} if 
\begin{enumerate}[{\em (i)}]
\item there are no disks (or half-disks) of contact,
\item there are no complementary monogons, and
\item $\partial_h \N(\B)$ is incompressible and
  $\partial$-incompressible in $M\ssm\N(\B)$.
\end{enumerate}
Here, a disk of contact is a disk $D \subset \N$ that is transverse to
the $I$--fibers of $\N$ and $\partial D \subset
\mathrm{int}(\partial_v\N)$. A half-disk of contact is a disk
$D \subset \N$ that is transverse to the fibers with $\partial D = \alpha \cup \beta$,
where $\alpha \ssm \partial \alpha \subset \mathrm{int}(\partial_v\N)$ and $\beta \subset \partial M$
are arcs and $\alpha \cap \beta = S^0$.
A complementary monogon is a disk $D \subset M \ssm \mathrm{int}(\N)$ with $\partial D = D \cap \N = \alpha \cup \beta$ where $\alpha \subset \partial_v \N$ is an $I$--fiber and $\beta \subset \partial_h \N$. Floyd--Oertel show \cite[Theorem 2]{floyd1984incompressible} that if $\B$ is an incompressible branched surface, then any surface fully carried by $\B$ is incompressible and boundary incompressible.

Unfortunately the branched surface $\widetilde \B_S$ constructed above
may have many disks of contact and therefore is not
incompressible. However, Floyd--Oertel show that such a disk of
contact $D$ may be removed by deleting from $\widetilde \N_S$ a
fibered neighborhood of $D$ in $\widetilde \N_S$, thereby producing a
new branched surface fibered neighborhood in which $S$ is fully carried. 
They prove that by applying this operation finitely 
many times, one can produce a fibered neighborhood 
$\N_S$ of an incompressible
branched surface $\B_S$ \cite[Proposition 3]{floyd1984incompressible}. 
  See also \cite[Lemma
  4.6]{oertel1984incompressible} where the construction of $\N_S$ is
done more systematically.
 We denote the corresponding
branched surface by $\B_S$ and also write $\N_S = \N(\B_S)$. 

We say that a branched surface (and its fibered neighborhod) obtained
in this way is {\em adapted to the triangulation ${\bf t}$}, and record
two important properties of the construction:

\begin{itemize}
\item Each component of $\partial_v \widetilde \N_S$ contains a subarc of the $1$-skeleton
  of ${\bf t}$ as a fiber of $\partial_v \widetilde \N_S$. Since 
eliminating a disk of contact is done by deleting a fibered neighborhood of the disk, the property that each component of $\partial_v \N$ meet the $1$-skeleton is preserved.
\item The normal surface $S$, which was assumed to have minimal weight in its isotopy class, is contained in the branched surface fibered neighborhood $\N_S$, where it intersects each fiber transversely.
\end{itemize}

By Floyd--Oertel \cite[Theorem 1]{floyd1984incompressible}  (see also \cite[Theorem 3]{oertel1984incompressible}) this procedure produces a \emph{finite} collection
of properly embedded branched surfaces
$\B_1, \ldots, \B_n$ in $M$ such that $(a)$ any surface fully carried
by one of the $\B_i$ is incompressible and boundary incompressible,
and $(b)$ every incompressible and boundary incompressible orientable
surface is  fully carried by some $\B_i$. In
particular, the branched surface $\B_S$ constructed from the surface
$S$ appears up to isotopy as one of the $\B_i$ in this list.
(That fact that $\{ \widetilde \N_S \}$ forms a finite set of branched surfaces is obvious since there are only finitely many choices for normal disks in the construction. Showing that $\{\N_S \}$ is finite is more difficult and requires showing that one only needs to consider least weight disks of contact, of which there are finitely many.)

We summarize all of this in the following statement:
\begin{proposition} [Floyd-Oertel] \label{prop:branched finiteness}
Let $M$ be a compact Haken 3-manifold with a triangulation 
${\bf t}$. There is a finite collection
$\B_1,\ldots,\B_n$ of incompressible properly embedded branched
surfaces, so that
\begin{itemize}
  \item Each fibered neighborhood $\N(\B_i)$ is adapted to $\bf t$,
  and in particular every component of $\boundary_v\N(\B_i)$ has a
  fiber which is a subarc of an edge of ${\bf t}^{(1)}$. 
  \item Every properly embedded incompressible
boundary-incompressible surface $S$ in $M$ is properly isotopic to a surface
$S'$ fully carried in  one of the $\B_i$, which realizes the minimum weight
with respect to ${\bf t}$ in the isotopy class of $S$.
\end{itemize}
\end{proposition}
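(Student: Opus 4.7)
The plan is to follow the Floyd--Oertel construction outlined in the paragraphs preceding the statement, organized so that finiteness is visible at every stage. The starting point is Haken's theorem: any incompressible, boundary-incompressible properly embedded surface $S\subset M$ has a representative in its proper isotopy class which realizes the minimum weight $w$ with respect to $\bf t$, and such a minimum-weight representative may be isotoped (rel ${\bf t}^{(1)}$) to be normal with respect to ${\bf t}$. This reduces the proof to understanding normal surfaces: given any normal minimum-weight $S$, we must produce an incompressible branched surface $\B_S$ that fully carries $S$, is adapted to ${\bf t}$, and belongs to some \emph{a priori} finite list.

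Given such $S$, construct $\widetilde \N_S$ as a union of thickened normal disks of each type appearing in $S$ together with the fibered $3$-balls lying between consecutive disks of the same type in each tetrahedron, giving them compatible product structures so that the $I$-fibers match across tetrahedron boundaries. Since a triangulation admits only finitely many normal disk types (seven per tetrahedron, up to isotopy), the combinatorial data specifying which disk types appear take only finitely many values, so there are only finitely many isotopy classes of fibered neighborhoods $\widetilde\N_S$ that can arise. By construction, $S$ sits in $\widetilde\N_S$ transverse to the $I$-fibration, and each component of $\partial_v\widetilde\N_S$ contains, as a fiber, an arc of ${\bf t}^{(1)}$ coming from the edges of the tetrahedra along which thickened disks meet — this is the ``adapted'' property.

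The remaining step is to kill disks and half-disks of contact. Following \cite[Proposition~3]{floyd1984incompressible} and the systematic version in \cite[Lemma~4.6]{oertel1984incompressible}, we iteratively delete from $\widetilde\N_S$ a fibered neighborhood of a least-weight disk (or half-disk) of contact $D$; this produces a smaller fibered neighborhood that still fully carries $S$, because the weight of $\partial D$ was minimal among all disks of contact and $S$ can be pushed off $D$ without increasing weight. The operation strictly decreases the total weight contribution, so the process terminates after finitely many steps with a fibered neighborhood $\N_S$ whose associated branched surface $\B_S$ has no disks or half-disks of contact; incompressibility of $\partial M$ and minimality prevent complementary monogons and ensure condition \emph{(iii)}. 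Since the vertical deletion operation only removes, never adds, $\partial_v$-components, the property that each component of $\partial_v\N_S$ contains a ${\bf t}^{(1)}$-arc as a fiber is preserved.

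The main obstacle — and the heart of Floyd--Oertel's argument — is to show that only finitely many distinct $\N_S$ arise as the starting point $\widetilde\N_S$ and the sequence of least-weight disk-of-contact removals vary. The key input is that in a given $\widetilde\N_S$ the collection of least-weight disks of contact is finite up to isotopy (a bounded-combinatorial-complexity statement proved in \cite{floyd1984incompressible,oertel1984incompressible}), so each $\widetilde\N_S$ has only finitely many possible reduction sequences, and consequently only finitely many possible outputs $\N_S$. Combining this with the finiteness of the initial $\widetilde\N_S$ collection yields a finite list $\B_1,\dots,\B_n$ with the required properties, completing the proof.
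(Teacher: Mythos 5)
Your proposal follows essentially the same route the paper takes: it is a faithful unpacking of the Floyd--Oertel construction (normal minimal-weight representative, thickened normal disks giving $\widetilde\N_S$, iterative removal of least-weight disks/half-disks of contact, and the two-tier finiteness argument), exactly as summarized in the paragraphs preceding the statement. The paper does not attempt a detailed proof either --- it cites \cite[Theorems 1--3, Prop.\ 3]{floyd1984incompressible} and \cite[Lemma 4.6]{oertel1984incompressible} for the substance --- so your sketch matches both in substance and in level of detail. One small imprecision worth noting: removing a fibered neighborhood of a disk of contact does not ``never add $\partial_v$-components'' in the sense of component count (it typically splits an annulus of $\partial_v\N$ into two), but your real conclusion --- that no new vertical boundary surface is created, so every component of $\partial_v\N_S$ still contains an arc of ${\bf t}^{(1)}$ as a fiber --- is correct and is the same justification the paper gives for the ``adapted'' property being preserved.
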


\section{Topological preliminaries} 
\label{sec:top}

This section covers a number of fairly basic results from $3$-manifold topology that we will need for the proof of Theorem \ref{th:structure}.

\subsection{Embeddings in products} 
\label{sec:wald}

We first require the following lemma, which follows easily from work
of Waldhausen \cite{waldhausen1968irreducible}. 

If $S$ is a compact surface we say that $(W,\partial_b W) \subset
(S,\partial S)$ is a {\em subsurface with corners} (or just {\em
  subsurface} for short) if $W$ is a compact 2-manifold and $\partial_b W =
\partial W \intersect \partial S$ is a compact 1-submanifold of
$\partial W$. The endpoints of $\partial_b W$ are the corners and
$\partial W$ minus the corners is the {\em smooth} part of $\partial
W$.  We denote the closure of $\partial W \ssm \partial_b W$ as
$\boundary' W$. 

We call $(W,\partial_b W)$ (or just $W$) \emph{trivial} if it
is contained in a disk $D$ whose intersection with $\boundary S$ is empty or a single arc.
Note that if $W$ is not trivial, then either the image of $\pi_1 W$ in $\pi_1 S$ is nontrivial, or there is an essential arc of $S$ contained in  $W$.

Define a modified Euler characteristic as
$$\hat\chi(W,\partial_b W) = \chi(W) - {1\over 2} n$$
where $n$ denotes the number of arc components of $\partial_bW$ (see
e.g. \cite{casson-bleiler}).

\begin{lemma}\label{sub products}
Suppose that $(W, \partial_b W) \subset (S, \partial S)$ is a
subsurface with corners and that the restriction 
to each component of $W$ is not trivial.
 Let $Y = W\times[0,1]$ and suppose we have an embedding  of quadruples
  $$
  F:(Y,W\times\{0\},W\times\{1\}, \partial_b W \times [0,1]) \hookrightarrow
  (S\times[0,1],S\times\{0\},S\times\{1\}, \partial S \times [0,1]),
  $$
where $W \times \{0\} \to S \times \{0\}$ is given by the inclusion map.

 Then $F$ is isotopic, as a map of quadruples, to the
 inclusion map of $Y$. 
\end{lemma}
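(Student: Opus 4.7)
My plan is to reduce the lemma to Waldhausen's theorems on essential surfaces and product structures in irreducible $3$-manifolds \cite{waldhausen1968irreducible}. Since the argument decomposes over components, I may assume $W$ is connected, in which case the nontriviality hypothesis means $(W,\partial_b W) \hookrightarrow (S,\partial S)$ is an essential subsurface with corners: either $\pi_1(W) \to \pi_1(S)$ is nontrivial or $W$ contains an essential properly embedded arc. Correspondingly, $Y = W \times [0,1]$ is incompressible and $\partial$-incompressible in $S \times [0,1]$ both via the standard inclusion and via $F$.

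First I straighten the vertical sides. Using an isotopy of $S \times [0,1]$ supported in a collar of $\partial S \times [0,1]$ and fixing $S \times \{0\}$ pointwise, I arrange that $F|_{\partial_b W \times [0,1]}$ is the inclusion. Each component of $F(\partial_b W \times [0,1])$ is a rectangle or annulus sitting inside an annular component of $\partial S \times [0,1]$ with the bottom edge already correctly placed on $S\times\{0\}$, so this is a two-dimensional isotopy that extends ambiently.

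Next I straighten the top. Let $T := F(W \times \{1\})$; after the preceding step, $T$ is a properly embedded surface in $S \times [0,1]$ with boundary the union of $\partial_b W \times \{1\} \subset S \times \{1\}$ and $F(\partial' W \times \{1\})$ in the interior. The essentiality of $F(Y)$ forces $T$ to be incompressible and $\partial$-incompressible in $S \times [0,1]$, and the embedded product $F(Y)$ itself realizes a parallelism between $W \times \{0\}$ and $T$. Sliding through this parallelism gives a proper homotopy from $T$ to $W \times \{1\}$ rel $\partial$, and Waldhausen's theorem then supplies an ambient isotopy of $S \times [0,1]$, supported away from $S \times \{0\}$ and $\partial_b W \times [0,1]$, carrying $T$ onto $W \times \{1\}$. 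A further isotopy inside $S \times \{1\}$ arranges $F|_{W \times \{1\}}$ to be the identity.

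Finally, applying the same Waldhausen-type argument component by component to $F(\partial' W \times [0,1])$ and $\partial' W \times [0,1]$ — now two properly embedded surfaces with common boundary — yields an ambient isotopy identifying them. At this point $F(Y) = W \times [0,1]$ as subsets and $F$ is a self-homeomorphism of $W \times [0,1]$ restricting to the identity on $\partial(W \times [0,1])$; such a map is isotopic to the identity rel boundary by Waldhausen's uniqueness of product structures \cite{waldhausen1968irreducible}. The main delicate point is the second step, where verifying incompressibility and $\partial$-incompressibility of $T$ requires careful use of the nontriviality hypothesis and a case analysis distinguishing components of $W$ that are $\pi_1$-essential in $S$ from those that are only essential through a properly embedded arc.
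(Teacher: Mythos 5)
Your overall strategy---straighten the boundary pieces one at a time, then finish with Waldhausen's uniqueness of product structures---matches the paper's, and your first step (making $F$ the inclusion on $\partial_b W\times[0,1]$) and last step (applying \cite[Lemma 3.5]{waldhausen1968irreducible} to a self-homeomorphism of $W\times[0,1]$ that is the identity on the boundary) are both sound. However, your second step has a genuine gap that comes from the order in which you attack the pieces.

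The problem is that $T = F(W\times\{1\})$ is \emph{not} a properly embedded surface in $S\times[0,1]$: by the definition of the embedding of quadruples, $T$ lies entirely inside the boundary component $S\times\{1\}$. So Waldhausen's homotopy-implies-isotopy theorems, which concern properly embedded incompressible and $\partial$-incompressible surfaces, simply do not apply to $T$, and the "parallelism" of $T$ with $W\times\{0\}$ (also in the boundary) through $F(Y)$ is not the kind of parallelism his machinery handles. What you are really facing at this stage is a $2$-dimensional problem: the curves and arcs $F(\partial' W\times\{1\})$ bounding $T$ inside the surface $S\times\{1\}$ are not yet controlled. The paper avoids this by treating the \emph{sides} $F(\partial' W\times[0,1])$ first---these genuinely are properly embedded annuli and rectangles, so \cite[Lemma 3.4]{waldhausen1968irreducible} applies directly---and then the top $F(W\times\{1\})$ is \emph{forced} to coincide with $W\times\{1\}$ as the correct complementary region of the now-standard curves; there is nothing left to straighten. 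A second point you skip is the reduction that makes the components of $\partial' W$ essential: nontriviality of $W$ does not by itself prevent $\partial' W$ from having inessential circle or arc components (e.g., $W$ could be a $\pi_1$-injective subsurface with a small disk punched out), and the paper first caps these off (extending $F$ over $E\times[0,1]$ for the corresponding disks, using irreducibility) before invoking incompressibility of the side annuli. If you wanted to keep your ordering, you would have to straighten $F(\partial' W\times\{1\})$ in $S\times\{1\}$ by a surface argument first, which in effect collapses back to straightening the sides, so the paper's ordering is the cleaner route.
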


Note that the nontriviality condition is necessary -- consider a
knotted 1-handle attached to two disks in $S\times\{0\}$ and
$S\times\{1\}$.

\begin{proof} 
By applying a preliminary isotopy of $S \times [0,1]$ supported on a
neighborhood of $\partial S \times [0,1]$, we may assume that the
image of each arc of $\partial \partial' W \times [0,1]$ is vertical
in $\partial S \times [0,1]$.   

If a null-homotopic closed curve $\gamma$ of $\boundary' W$ bounds a
disk $E$ in $S$ then the image of $\boundary(E\times[0,1])$ bounds a
ball by irreducibility and $F$ can be extended over
$E\times[0,1]$. Thus we may assume that each component of $W$ injects
on $\pi_1$. Similarly if $\alpha$ is an arc component of $\boundary'
W$ that cobounds a disk $E$ with an arc $\beta\subset \partial S$, we
can again extend over $E\times[0,1]$. 
Hence, we may assume that $\partial' W$ consists of homotopically essential curves and
essential proper arcs.

Thus each component of $F(\partial' W \times [0,1])$ is either an incompressible annulus in $S \times [0,1]$ or a disk which meets $\partial S \times [0,1]$ in two vertical arcs.
Hence (see for example \cite[Lemma 3.4]{waldhausen1968irreducible})
$F(\partial'W\times[0,1])$ is isotopic to $\partial' W \times [0,1]$, and
we may adjust $F$ by an isotopy that is constant on $W \times \{0\}$
to obtain a map that is the identity on $\partial'W\times[0,1]$. 

One further isotopy, constant on $W \times \{0\}$ and supported on a
neighborhood of $\boundary_b W\times[0,1]$, yields a map (which
we still call $F$) that is the identity on all of $\partial
W\times[0,1]$. 

Now $F$ is the identity on $W\times\{0\} \union \boundary
  W\times[0,1]$, and it follows that $F(W\times\{1\}) = W\times\{1\}$,
    and from this that $F(Y)=Y$. 
Hence, by 
\cite[Lemma 3.5]{waldhausen1968irreducible}, the homeomorphism
$F^{-1} \colon Y \to Y$ is isotopic to the identity via an isotopy
that is constant on $W \times \{0\} \cup \partial W \times
[0,1]$. Precomposing $F$ with this isotopy gives the required isotopy
from $F$ to the identity and completes the proof. 
\end{proof}

\subsection{Level curves}
\label{sec:level}
Let $Y=Z\times I$ where $Z$ is a surface and $I$ an interval. Let $C$
be a disjoint closed union of
simple loops in $\interior(Y)$. We say $C$ is \emph{level} with respect to this
product structure on $Y$ if it is isotopic to a union of the form $\bigcup c_i\times \{t_i\}$
where $c_i$ are loops in $Z$ and $t_i\in [0,1]$. 

\begin{lemma}\label{straighten in product}
Let $Y=Z\times[0,1]$ where $Z$ is a surface. Let $C$ be a finite union of
simple loops in $Y$. Then $C$ is level if and only if there is an
ordering $c_1,\ldots,c_n$ of its components so that each $c_i$ is
isotopic to $Z\times\{0\}$ in the complement of $c_{i+1}\union\cdots\union c_n$.
\end{lemma}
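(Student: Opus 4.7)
The forward direction is essentially immediate. If $C$ is already level, write $c_i = \gamma_i\times\{t_i\}$, reindex so that $t_1\le t_2\le\cdots\le t_n$, and for each $i$ use the vertical isotopy $s\mapsto \gamma_i\times\{t_i-s\}$, $s\in[0,t_i]$, which slides $c_i$ down to $\gamma_i\times\{0\}$ through the slab $Z\times[0,t_i]$. The only $c_j$ with $j>i$ that could meet this slab lie at height exactly $t_i$, and at that initial instant the isotopy is at $c_i$ itself, disjoint from the other components by hypothesis. So the chosen ordering witnesses the required property.

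For the reverse direction I would induct on $n$, leveling the higher-indexed curves first. The conceptual point is that once $c_2,\ldots,c_n$ have already been placed in level position, the given hypothesis for $c_1$ takes place in the complement of precisely those curves, so $c_1$ can be leveled at the end without disturbing them. The base case $n=0$ is trivial.

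For the inductive step, observe that the restricted ordering $(c_2,\ldots,c_n)$ still satisfies the hypothesis of the lemma (the $i$-th condition only involves the tail of the list). By induction there is an ambient isotopy $\Psi$ of $Y$, which we may take to fix $\partial Y$ pointwise by choosing compactly supported extensions of the interior isotopies, such that $\Psi(c_j)=\gamma_j\times\{t_j\}$ with $t_j\in(0,1)$ for each $j\ge 2$. Set $c_1':=\Psi(c_1)$. Because $\Psi$ fixes $Z\times\{0\}$, applying $\Psi$ to the original hypothesis produces a properly embedded annulus $A_1\subset U':=Y\ssm\bigcup_{j\ge 2}(\gamma_j\times\{t_j\})$ with $\partial A_1 = c_1'\cup(\gamma\times\{0\})$ for some simple closed curve $\gamma\subset Z$. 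A standard collar argument lets us isotope $A_1$, keeping $c_1'$ and the complement of a collar neighborhood of $Z\times\{0\}$ fixed, so that on the collar $Z\times[0,\eta]$ the annulus becomes vertical, equal to $\gamma\times[0,\eta]$. Truncating at any $\epsilon\in(0,\eta)$ with $\epsilon<\min_{j\ge 2}t_j$ produces a sub-annulus lying in $\mathrm{int}(Y)\cap U'$ with boundary $c_1'\cup(\gamma\times\{\epsilon\})$, which realizes an isotopy of $c_1'$ to the level curve $\gamma\times\{\epsilon\}$ through the interior. Isotopy extension then gives an ambient isotopy $\Phi$ of $Y$, compactly supported in $U'$, fixing $\partial Y$ and each $\gamma_j\times\{t_j\}$, with $\Phi(c_1')=\gamma\times\{\epsilon\}$. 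The composition $\Phi\circ\Psi$ places all of $C$ in level position.

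The one delicate point to flag is that an ambient isotopy of $Y$ preserves $\partial Y$ setwise, so one cannot drag the interior curve $c_1'$ directly onto $Z\times\{0\}$; the collar normalization of the cobounding annulus, together with the choice $\epsilon<\min_{j\ge 2}t_j$, is precisely what allows the push-off of $c_1'$ to a nearby interior level curve without crossing the already-leveled components.
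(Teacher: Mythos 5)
Your proof is correct, and it takes the same route the paper has in mind: the paper dismisses this with ``The proof is an easy induction,'' and your argument is exactly that induction, fleshed out. The forward direction by sliding down in height order, and the reverse direction by leveling the tail first via the inductive hypothesis and then leveling $c_1$ last, is the right order of operations (leveling $c_1$ first would be undone by the isotopy of the tail, over which one has no support control).

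One small imprecision worth flagging: you describe the trace of the isotopy of $c_1'$ into $Z\times\{0\}$ as a ``properly embedded annulus $A_1$.'' The trace of an isotopy of a circle in a $3$--manifold is in general only an immersed annulus, so that claim is not automatic. Fortunately nothing in your argument actually requires embeddedness: what you use is the one-parameter family of embeddings $F_t$ itself. The collar normalization should therefore be phrased on the family rather than on a surface---for $t$ near $1$ the circles $F_t$ lie in a collar $Z\times[0,\delta]$ with $\delta$ less than $\min_{j\ge 2}t_j$, and composing with a boundary push-off taking $Z\times\{0\}$ to $Z\times\{\epsilon\}$ (and fixing everything above height $2\epsilon$) converts $F_t$ into an isotopy from $c_1'$ to the level curve $\pi_Z(F_1)\times\{\epsilon\}$ that stays in $\mathrm{int}(Y)$ and misses every $\gamma_j\times\{t_j\}$. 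Isotopy extension, supported near the track and hence away from the $\gamma_j\times\{t_j\}$ and from $\partial Y$, then gives the ambient isotopy $\Phi$ as you intend. With that rewording the proof is complete.
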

The proof is an easy induction.

\begin{lemma}\label{different straightenings}
Let $S$ be  a compact orientable surface and let
 $C$ be a level disjoint closed union of essential curves in $S\times \R$.
 Let $T_1$ and
$T_2$ be surfaces isotopic to $S\times \{0\}$ rel $\partial
S\times\R$,  and disjoint  from each
other and from $C$.  If $X$ is the region bounded by $T_1$ and $T_2$ (so $X \cong S \times [0,1]$),
then $C\cap X$ is level in $X$.

Furthermore, if $C \subset Y \subset X$ where $Y \subset X$ is a subproduct, then the isotopy can be chosen to be supported in $Y$---that is, the image of $C$ remains in $Y$ throughout the isotopy.
\end{lemma}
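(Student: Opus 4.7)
The plan is to reduce to Lemma~\ref{straighten in product}, applied to the product $X\cong S\times[0,1]$ with $T_1$ playing the role of $Z\times\{0\}$. I will construct, for each component of $C\cap X$, an annulus in $X$ connecting it to a curve in $T_1$ that avoids the later components in a carefully chosen ordering.

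First I would put $T_1\cup T_2$ into normal position with respect to the cylinders $c\times\R$ for each distinct base curve $c$ appearing in the level decomposition $C=\bigcup c_i\times\{t_i\}$. Using the incompressibility of $T_1$ and $T_2$ in $S\times\R$ (each is $\pi_1$-injective, being isotopic to $S\times\{0\}$) together with irreducibility of $S\times\R$, an innermost-disk argument carried out in $S\times\R\ssm C$ isotopes $T_1\cup T_2$ (disjointly from $C$) so that $T_j\cap(c\times\R)$ consists of essential circles parallel to $c$. I would then cancel adjacent parallel intersection circles: such a pair cobounds an innermost annulus in $T_j$ (all these circles are isotopic in $T_j$ by $\pi_1$-injectivity), which together with the cobounded sub-annulus of $c\times\R$ between them forms a torus bounding a solid torus in $S\times\R$, through which $T_j$ can be isotoped to eliminate the two intersections. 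After iterating, each cylinder $c\times\R$ meets $X$ in a single sub-annulus $c\times[a_c,b_c]$ with $c\times\{a_c\}\subset T_1$ and $c\times\{b_c\}\subset T_2$.

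With this normal form in hand, I would order the components of $C\cap X$ by increasing $\R$-coordinate. For each $d=c\times\{t\}\in C\cap X$, the sub-annulus $A_d=c\times[a_c,t]\subset X$ gives an isotopy from $d$ onto the curve $c\times\{a_c\}\subset T_1$, and $A_d$ is disjoint from every later component: those on other cylinders are automatically disjoint from $c\times\R$, and those on the same cylinder have strictly larger $\R$-coordinate, hence lie outside $c\times[a_c,t]$. Lemma~\ref{straighten in product}, applied with $T_1$ as $Z\times\{0\}$, then gives that $C\cap X$ is level in $X$. For the ``furthermore'' claim, the cylinders $c\times\R$, the annuli $A_d$, and all preliminary isotopies of $T_1\cup T_2$ can be confined to the subproduct $Y\supset C$, so the resulting straightening isotopy is supported in $Y$.

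The main obstacle I expect is the cancellation step: verifying that the torus formed by the two coboundary annuli bounds a solid torus in $S\times\R$ on a side disjoint from $C$ and from the other surface $T_{3-j}$. Establishing this requires careful use of the product structure of $S\times\R$ together with Waldhausen-type uniqueness for essential surfaces and the classification of compact submanifolds with torus boundary in a product, applied in the appropriate relative setting.
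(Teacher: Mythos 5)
Your proposal takes a genuinely different route from the paper. The paper first reduces to the case where one of the two surfaces, say $T_2$, is a level surface $S\times\{t\}$ (passing through an intermediate level surface $T_3$ to then handle arbitrary $T_1,T_2$). Then, for the lowest component $c$ of $C\cap X$, it takes the vertical annulus $A$ from $c$ straight down to $T_2$, uses innermost-region exchange moves on $A\cap T_1$ to push $A$ off $T_1$ and into $X$, and proceeds by induction on height to satisfy the hypotheses of Lemma~\ref{straighten in product}. The key is that only \emph{one} surface, $T_1$, ever needs to be dealt with by exchange moves, and only against a single vertical annulus at a time. You instead propose to normalize \emph{both} $T_1$ and $T_2$ simultaneously against \emph{all} cylinders $c\times\R$ up front, and then read off the straightening isotopies directly.

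The step you flag as ``the main obstacle'' is in fact a genuine gap, not a routine check. When you cancel an adjacent pair of circles in $T_j\cap(c\times\R)$, the solid torus $V$ you push across has boundary $A_1\cup A_2$ with $A_2\subset c\times\R$, and $A_2$ may contain level components of $C$ in its interior (those components of $C$ lying in the same region of $c\times\R\smallsetminus T_j$), as well as intersection circles of $T_{3-j}$ with $c\times\R$. Pushing $T_j$ across $V$ would then cross $C$ or $T_{3-j}$, which is disallowed. Resolving this requires arguing that one can always find a cancellable pair whose cobounded annulus in $c\times\R$ avoids these obstructions, or redoing the cancellation relative to $C$ and $T_{3-j}$; neither is immediate, and the problem is exactly what the paper's order of operations (level $T_2$, one component at a time, exchange moves against a single surface) is designed to sidestep. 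A second, more minor, inaccuracy: after canceling, $T_j\cap(c\times\R)$ is a single \emph{essential} circle in the cylinder, but there is no reason it is a level circle $c\times\{a_c\}$ as you assert; you should instead work with the compact sub-annulus of $c\times\R$ between the two intersection circles of $T_1$ and $T_2$, which is enough for Lemma~\ref{straighten in product} since that lemma only requires isotopies into $T_1$, not into a level set of the ambient product. Finally, for the ``furthermore'' clause, the cylinders $c\times\R$ are not themselves contained in $Y$ (they extend outside $X$), so confining the construction to $Y$ requires the same kind of innermost-region argument against $\partial Y$ that the paper uses, rather than being automatic.
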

By saying that $Y$ is a {\em subproduct of $X$} we mean that there is a subsurface $R \subset S$ and a homeomorphism of pairs $(X,Y) \cong (S \times [0,1],R \times [0,1])$.

\begin{proof} By applying an (ambient) isotopy, we may assume that the components of $C$ have the form $c \times \{t\}$ for some $t \in \R$ with respect to the given product structure on $S \times \R$.

We first prove this in the case that $T_2$ is a level surface $S\times\{t\}$.  We may assume $T_1$ lies above $T_2$. 
Let $c$ be a component of $C\cap X$ of minimal height with respect to the product structure $S \times \R$. The vertical
annulus $A$ taking $c$ to $T_2$ is disjoint from $C \setminus c$, but might intersect $T_1$.  An innermost
region of $A\intersect X$ or $A\intersect X^c$ with boundary in $T_1$ must be a disk or
annulus, and an exchange move will simplify the intersection reducing the number of components of intersection. 
Thus an annulus from $c$ to $T_2$ meeting $T_2$ only in one boundary component, disjoint from $C \setminus c$, and intersecting $T_1$ minimally, will in fact be disjoint from $T_1$, hence contained in $X$.  This gives an isotopy from $c$ to $T_2$ avoiding the remaining
curves. Proceeding by induction on the heights of components of $C$,
we can now apply Lemma \ref{straighten in product} to conclude that
$C\cap X$ is level in $X$.

Now if $T_1$ and $T_2$ are arbitrary, let $T_3 = S\times\{t\}$ be a
level surface below both. Let $X_{ij}$ be the region bounded by $T_i$
and $T_j$ and suppose $X_{13}$ contains $T_2$. Then the previous case
applies to $X_{13}$, and after choosing a product structure on
$X_{13}$ in which the top and bottom are level surfaces, we can again
apply the previous case to $X_{12}$. This concludes the
proof of the first claim.

Finally, suppose $C \subset Y$, with $(X,Y) \cong (S \times [0,1],R \times [0,1])$.  If at any stage of the isotopy of the $i^{th}$ component the corresponding annulus passes outside $Y \cong R \times [0,1]$, and hence through $\partial R \times [0,1]$, another innermost region exchange move argument implies that we can replace the annulus with one having fewer intersections with  $\partial R \times [0,1]$.  In particular, such an annulus with the fewest intersections with $\partial R \times [0,1]$ will be disjoint, and hence the isotopy will be supported in $Y$.
\end{proof}

\subsection{Minimal intersections}

Finally, we will need a few facts regarding intersections of surfaces and $1$--manifolds.  One such fact is the following proposition which roughly states that if $S$ is an embedded surface in $M$ which minimizes intersection with some $1$-manifold $C$ in its isotopy class, then one cannot \emph{homotope} $S$ to reduce intersections with $C$.

\begin{proposition}\label{prop:FHS}
Let $M$ be a compact, irreducible, orientable $3$-manifold and $C \subset M$ is a closed, embedded $1$--manifold.  
Suppose $S \subset M$ is an incompressible, boundary incompressible, properly embedded surface that minimizes the cardinality $|S \cap C|$ in its proper isotopy class.  
If $f \colon S \to M$ is any piecewise smooth map which is properly {\em homotopic} to the inclusion of $S$ and transverse to $C$, then $|f^{-1}(C)| \geq |S \cap C|$.
\end{proposition}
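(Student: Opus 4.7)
My plan is to argue by contradiction: assume $|f^{-1}(C)| < |S \cap C|$ and construct an embedded disk $D' \subset M$ bounded by an arc on $C$ and an arc on $S$, with $\mathrm{int}(D')$ disjoint from $S \cup C$. Pushing $S$ across $D'$ would yield a proper isotopy of $S$ reducing $|S \cap C|$ by two, contradicting the minimality hypothesis. To produce $D'$, I would choose a smooth proper homotopy $H \colon S \times [0,1] \to M$ from the inclusion $i$ to $f$, transverse to $C$. Then $\Gamma := H^{-1}(C)$ is a properly embedded $1$--manifold in $S \times [0,1]$ with $|S \cap C|$ boundary points on $S \times \{0\}$ and $|f^{-1}(C)|$ on $S \times \{1\}$. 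Writing $a,b,d$ for the number of arc components of $\Gamma$ whose endpoints lie both on the bottom, both on the top, or one on each, respectively, one has $|S \cap C| = 2a+d$ and $|f^{-1}(C)| = 2b+d$; hence $a > b \geq 0$ and there exists an arc $\alpha \subset \Gamma$ with both endpoints $(p,0),(q,0) \in (S \cap C)\times\{0\}$.

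From $\alpha$ I build the candidate singular reducing disk. Let $\alpha' \subset S$ be the first-factor projection of $\alpha$; the loop $\alpha \cup (\alpha' \times \{0\})$ is null-homotopic in $S \times [0,1]$ (via the straight-line contraction of the second coordinate), so it bounds an embedded disk $D \subset S \times [0,1]$. The restriction $H|_D \colon D \to M$ is then a singular disk in $M$ whose boundary decomposes into the arc $H(\alpha) \subset C$ joined to the arc $\alpha' \subset S$. Choosing $\alpha$ innermost among arcs of $\Gamma$ with both endpoints on the bottom, and eliminating closed components of $\Gamma$ bounding subdisks of $D$ by local isotopies of $H$, I may arrange that $D \cap \Gamma = \alpha$, so $H(\mathrm{int}(D))$ is disjoint from $C$.

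The main obstacle is upgrading the singular disk $H|_D$ to an embedded disk $D' \subset M$ with $\mathrm{int}(D') \cap (S \cup C) = \emptyset$. The $C$-disjointness of the interior is already controlled by the innermost setup of the previous paragraph. For the $S$-intersections and self-intersections of $H|_D$, I would apply innermost-disk surgery to $H|_D^{-1}(S) \cap \mathrm{int}(D)$ using incompressibility and boundary-incompressibility of $S$ together with irreducibility of $M$, and then invoke Papakyriakopoulos' Dehn Lemma (or the tower construction) to promote the resulting singular map with embedded boundary to an actual embedded disk $D'$. The delicate technical point -- and the main source of difficulty -- is that each surgery modifying the interior of the disk must be performed without creating new $C$-intersections; this is controlled by the embeddedness of $C$, the innermost choice of $\alpha$, and irreducibility of $M$. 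Alternatively, the entire upgrade can be bypassed by endowing $M$ with a Riemannian metric in which each component of $C$ is a length-minimizing geodesic and invoking the Freedman--Hass--Scott theorem to obtain a least-area embedded representative $g$ of the proper homotopy class of $i$, which by Waldhausen's theorem is properly isotopic to $S$, and which a mutual-minimization argument between the area-minimizing surface and the geodesic $C$ shows to minimize $|g^{-1}(C)|$ over the homotopy class.
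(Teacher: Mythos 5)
Your primary approach has a gap at the disk construction. The projection $\alpha'\subset S$ of the embedded arc $\alpha\subset\Gamma$ can self-cross, so $\alpha\cup(\alpha'\times\{0\})$ need not be embedded, and even an embedded null-homotopic loop in $S\times[0,1]$ does not automatically bound an embedded disk; the straight-line contraction produces only a singular disk. The downstream consequence is worse: the boundary of $H|_D$ consists of an immersed arc $H(\alpha)\subset C$ (which can wrap, since $H|_\Gamma$ is only a local diffeomorphism onto $C$) together with the possibly non-embedded arc $\alpha'\subset S$. Dehn's Lemma and the tower construction require an embedded boundary circle, normally lying in the boundary of the ambient $3$-manifold; neither hypothesis is met here. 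Turning the singular bigon into an honest embedded disk meeting $S\cup C$ only along its boundary, while preserving the decomposition of the boundary into one $C$-arc and one $S$-arc, is exactly the hard content, and none of the references you cite supplies it in this setting.

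Your alternative sketch is the route the paper actually takes, but the mechanism is not a ``mutual-minimization argument'' with a fixed metric in which $C$ is geodesic; I do not see how an $f$-independent metric would force $|g^{-1}(C)|$ to minimize over the homotopy class. The paper instead chooses the metric \emph{after} seeing $f$: a product metric on a tubular neighborhood $V_C\cong C\times D^2$ in which each meridian disk has area $A$, a product collar of $\partial M$, and $\Area(f)\le(k+\tfrac14)A$ where $k=|f^{-1}(C)|$. A Freedman--Hass--Scott least-area embedded representative $h$ then satisfies $\Area(h)\le(k+\tfrac12)A$, and because projection of each component of $V_C$ onto $D^2$ is area-nonincreasing, the total absolute degree $d(h)=\sum_W|\deg(W)|$ over the components $W$ of $h(S)\cap V_C$ satisfies $d(h)\le k$. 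Thurston's lemma (a surface in $V$ representing $k$ times a relative class has at least $k$ components) gives $|\deg(W)|\le1$ per component, and then elementary innermost-disk exchanges between the two \emph{embedded} surfaces $h(S)$ and $\partial V_C$---no Dehn Lemma required---isotope $h$ so that $h(S)\cap V_C$ is a union of at most $d(h)$ meridian disks; Waldhausen then identifies this embedding as properly isotopic to $S$, forcing $|S\cap C|\le d(h)\le k$. What your sketch is missing is precisely this $f$-dependent choice of metric and the degree bookkeeping it enables.
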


\begin{proof} 
Suppose $f \colon S \to M$ is any piecewise smooth map properly homotopic to the inclusion of $S$ into $M$, and which is transverse to $C$.  Set $k = |f^{-1}(C)|$.  
Let $V_C \cong C \times D^2$ be a small tubular neighborhood of $C$.  Adjusting $f$ by a proper homotopy, we may assume that $f(S) \cap V_C$ is a union of $k$ disjoint meridian disks.
Next let $N \cong \partial M \times [0,1)$ be a tubular neighborhood of the boundary of
  $M$ whose closure is disjoint from $V_C$.  Using the neighborhood $N$ and the homotopy
  from the inclusion of $S$ to $f$, we can find a further homotopy so that $f|_{\partial
    S}$ is the inclusion $\partial S \to \partial M$ (and is hence an embedding) and so that $f(S) \cap V_C$ is still a union of $k$ disjoint meridian disks.

Next, choose any Riemannian metric on $M$ with the following properties:
\begin{enumerate}
\item the restriction to $V_C \cong C \times D^2$ is a product metric,
\item the restriction to $N \cong \partial M \times [0,1)$ is a product metric,
\item the area of each meridian disk of $V_C$ is some number $A >0$, and 
\item $\Area(f) \leq (k+\frac14)A$.
\end{enumerate}
It is straightforward to construct such a metric.

Now let $h \colon S \to M$ be a least area surface properly homotopic to the inclusion of $S$ into $M$ rel $\partial S$.  Such an $h$ exists by \cite{SacksUhlenbeck,SchoenYau} in the closed case and \cite{Lemaire,HassScott} in the case of non-empty boundary.  Note that $\Area(h) \leq (k+\frac14)A$.  

According to \cite{FHS}, $h$ is an embedding, or in the case that $S$ is closed, possibly a double cover an embedded non-orientable surface with a tubular neighborhood that is a twisted $I$--bundle over the non-orientable surface (see Section 7 of \cite{FHS} for the case $\partial S \neq \emptyset$).  In the latter case, we may replace $h$ with an embedding at the expense of an arbitrarily small increase in the area.  In either case, by a further small isotopy if necessary, we can also assume that $h$ is transverse to $\partial V_C$ and $\Area(h) \leq (k + \frac12)A$.  The embedding $h$ is properly isotopic to the inclusion of $S$ into $M$ since $M$ is irreducible \cite[Corollary 5.5]{waldhausen1968irreducible}.

Consider any component $W$ of $h(S) \cap V_C$ and suppose $V \cong S^1 \times D^2$ is the component containing $W$.

Note that the projection  $V \to D^2$ onto the second factor is an isomorphism
$H_2(V,\partial V) \to H_2(D^2,\partial D^2)$, where the latter group is $\BZ[D^2]$. Thus
we can define $\deg(W)$ to be the integer such that
\[ [W] = \deg(W) [D^2] \in H_2(V,\partial V).\]
This is equal to the topological degree of the projection $(W,\partial W) \to
(D^2,\partial D^2)$, as well as the topological degree
of the map $\partial W \to \partial D^2$. 

Now consider the metric on $D^2$ for which the projection $V \to D^2$ is a Riemannian
submersion, and in particular a contraction. We thus have
\[ \Area(W) \geq |\deg(W)| A.\]
Setting
\[ d(h) = \sum_{W \subset h(S) \cap V_C} |\deg(W)| \]
where the sum is over all components of intersection, 
and combining with our area bound on $h$, we have
\[ d(h) A \leq \Area(h) \leq (k+\frac12) A.\]
Since $d(h)$ is an integer, we have $d(h) \leq k$.  The next claim essentially completes the proof.
\begin{claim}  The map $h$ is properly isotopic to an embedding $h'$, transverse to $\partial V_C$ with $d(h') \leq d(h)$ so that $h'(S) \cap V_C$ is a union of meridian disks.
\end{claim}
Assuming the claim, it follows that $|h'(S) \cap C|$ is the number of meridian disks of intersection $h'(S) \cap V_C$, which is $d(h') \leq d(h) \leq k$.  Since $S$ was assumed to meet $C$ in the fewest number of points in the proper isotopy class, and since $h'$ is properly isotopic to $h$, and hence also the inclusion of $S$, it follows that
\[ |S \cap C| \leq d(h') \leq k = |f^{-1}(C)|,\]
as required.  This completes the proof of the proposition.

\begin{proof}[Proof of Claim]
  Recall first a lemma of Thurston 
\cite[Lemma 1]{thurston1986norm}, that if a properly embedded surface in a 3-manifold $V$ represents a
$k$-th multiple of a homology class in $H_2(V,\partial V)$, then the surface has at least
$k$ components. Thus for our connected components $W$ of $h(S)\cap V_C$, we have
$$|\deg(W)| \leq 1.$$

To avoid cluttering the notation, we write $S_0 = h(S)$, $d(S_0) = d(h)$.  Throughout the proof, we repeatedly replace $S_0$ with isotopic images which we again denote $S_0$.  We first isotope $S_0$ to remove trivial intersections with $\partial V_C$. That is, if some component of $S_0 \cap \partial V_C$ bounds a disk $D$ in $\partial V_C$, then it also bounds a disk $E$ in $S_0$ by incompressibility. Suppose that $D$ is innermost on $\partial V_C$, and consider the disk swap that replaces the disk $E$ in $S_0$ with $D$.  (This can be done via isotopy of $S_0$ in $M$ since $D \cup E$ bounds a ball.) This may reduce the number of intersections with $V_C$, but it does not increase $d(S_0)$. To see this, note that if $W$ was the component of $S_0 \cap V_C$ whose boundary contained $\partial D$, then since $\partial D \to \partial D^2$ has degree $0$, the degree of $W$ is unaffected by capping this boundary component off with the disk $D$. After pushing $D$ slightly into $V_C$, we have a new surface isotopic to $S_0$ with fewer trivial intersections with $\partial V_C$ and no greater degree.

Hence, we may suppose that $S_0$ does not meet $\partial V_C$ in trivial circles. We next isotope $S_0$ so that each component of $S_0 \cap V_C$ is incompressible: If $D$ is a compressing disk for some component $W$ of $S_0 \cap V_C$, then again suppose that $D$ is innermost in the sense that $S_0$ does not meet the interior of $D$. Incompressibility of $S_0$ in $M$ implies that there is a disk $E \subset S_0$ with $\partial E = \partial D$ and we may assume that $E$ is not contained in $V_C$. 

Let $W$ be the component of $S_0 \cap V_C$ meeting $D$. Compressing along $D$ results in two surfaces $W_1$ and $W_2$ and we call $S_0'$ the surface isotopic to $S_0$ obtained by replacing $E$ with $D$ (again possible since $D \cup E$ bounds a ball).  Label so that $W_1$ is a component of $S_0' \cap V_C$.
Let $\mathcal{Y}$ be the components of $S_0 \cap V_C$ contained in $S_0 \ssm E$ and $\mathcal{Z}$ the components of $S_0 \cap V_C$ contained in $E$. Then
\[
d(S_0) = \sum_{Y \in \mathcal{Y}} |\deg(Y)| + |\deg(W)| + \sum_{Z \in \mathcal{Z}} |\deg(Z)|
\]
and 
\[
d(S_0') = \sum_{Y \in \mathcal{Y}} |\deg(Y)| + |\deg(W_1)|.
\]
We must show that $d(S_0') \leq d(S_0)$.  
Since $|\deg(W_1)| $ and $|\deg(W)|$ are both at most 1, 
it suffices to prove that there is at least one $Z \in \mathcal{Z}$ with $|\deg(Z)| = 1$. 

Let $E'$ be an innermost disk on $E$ bounded by a component of $E \cap \boundary
V_C$. Then $\boundary E'$ is an essential curve on the boundary of some component of
$V_C$. If $\mathrm{int}(E')$ is in the exterior of $V_C$ then $\boundary V_C$ is compressible in $\cM$,
and this is impossible as $\cM$ is hyperbolic. Thus $E'$ is a meridian of $V_C$, which
implies it is a component of $\mathcal Z$ with $|\deg(E')| = 1$. 

Thus we have isotoped $S_0$ to reduce intersections with $\boundary V_C$ while not increasing its degree.
We may therefore assume that each component of $S_0 \cap V_C$ is incompressible in
$V_C$. This makes each component either a boundary parallel annulus (degree $0$) or a
meridian disk (degree $\pm1$). Pushing the annuli out of $V_C$ does not affect the
degree and completes the proof. 
\end{proof}
\end{proof}

\section{Fibered fillings of manifolds}
\label{sec:fibered_filling}

In this section we prove Theorem \ref{th:structure} and Theorem \ref{th:main_wp}.  After
setting up notation for Dehn filling and bringing in the branched surfaces from Section
\ref{sec:branched}, we begin the proof in earnest in Section \ref{S:area arguments}. 

\subsection{Fillings and complexity}
\label{S:fill complexity}

Let $\cM$ be a compact $3$-manifold whose boundary is a union
of tori $\partial \cM = \partial_1 \cM \sqcup \cdots \sqcup \partial_r
\cM$, such that $\mathrm{int}(\cM)$ is a complete finite-volume hyperbolic manifold.
A {\em Dehn surgery coefficient} $\beta_i$ on $\partial_i \cM$ is either the
isotopy class of an essential simple closed curve in $\partial_i \cM$,
or ``$\infty$''. 
Each  simple closed curve $\beta_i$ in $\partial_i \cM$  determines a Dehn
filling attaching a solid torus whose meridian is identified with
$\beta_i$, and $\infty$ corresponds to no filling at all. 
Given $\beta = (\beta_1, \ldots \beta_r)$, we denote the manifold
obtained by these specified fillings by $M_\beta$.

Letting $\Delta_i$ denote the set of Dehn surgery coefficients for
$\partial_i\cM$, we say that a property $\mathcal{P}$ holds \emph{for
  all sufficiently long fillings} if there are finite sets $K_i\subset
\Delta_i$ that that $M_\beta$
has $\mathcal{P}$ for all $\beta \in \prod_{i=1}^r(\Delta_i\ssm K_i)$. 
For example, Thurston's hyperbolic Dehn surgery theorem \cite[Theorem
  5.8.2]{Th} states that for any $\ell >0$, the interior of $M_\beta$
is hyperbolic and the lengths of the cores of the filled solid tori
are less than $\ell$, for all sufficiently long fillings.

Now let $\cM$ be as in the statement of Theorem \ref{th:structure} and let $\SS$ be the
collection of all fibers of all fibered fillings of $\cM$.
(Formally $\SS$ is a set of pairs $(S,\beta)$ where $\beta$ determines
a filled manifold $M_\beta$ in which $S$ is a fiber of a fibration.)
Note that we do not require each boundary component of $\cM$ to be filled, so some surfaces in $\SS$ may have boundary.
We also fix a
triangulation ${\bf t}$ of $\cM$.

Given $(S,\beta)\in\SS$, 
isotope $S$ in $M_\beta$
to intersect the added solid tori in meridian disks, and choose it so
the number of disks is minimal. Now let $\cS = S\cap \cM$, and
assume by further isotopy if necessary, that $\cS$ is also transverse to ${\bf t}^{(1)}$ and intersects
it minimally.
Let $\mathring{\SS}=\{\cS:(S,\beta)\in\SS\}$ denote the resulting set of properly embedded surfaces
in $\cM$. Said differently, for each $(S,\beta) \in \SS$, $\cS$ is a
properly embedded surface in $\cM$ which after capping off with a disk
is isotopic in $M_\beta$ to $S$ and, among all such surfaces, minimize the
{\em complexity} defined by the pair $(|\partial \cS|, w(\cS))$ in the lexicographic order, where $w(\cS)$ is the weight of $\cS$.

\begin{lemma}
Each $\cS \in \mathring\SS$ is incompressible and boundary incompressible in $\cM$.
\end{lemma}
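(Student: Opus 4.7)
The plan is to argue by contradiction, using three facts: (i) the fiber $S$ is incompressible and $\partial$-incompressible in $M_\beta$, (ii) $M_\beta$ is irreducible and its torus boundary components are incompressible (because the boundary circles of the essential fiber $S$ are essential in $S$), and (iii) $|\partial\cS|$ equals, up to the fixed contribution from unfilled boundary, the number of meridional intersections of $S$ with the filling solid tori $\bigcup V_i$, and has been minimized in the proper isotopy class of $S$ in $M_\beta$. The overall strategy is: given a hypothetical (boundary) compressing disk for $\cS$ in $\cM$, produce a disk-swap isotopy of $S$ inside $M_\beta$ that strictly reduces the number of meridional intersections, contradicting the minimality of $|\partial\cS|$.

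For incompressibility, I would take a compressing disk $D\subset\cM$ for $\cS$. Then $\partial D\subset S$ is essential in $\cS$ but, by incompressibility of $S$ in $M_\beta$, bounds a disk $D'\subset S$. If $D'$ were disjoint from $\bigcup V_i$ we would have $D'\subset\cS$, contradicting essentiality of $\partial D$ in $\cS$; hence $D'$ contains at least one meridian disk. Irreducibility of $M_\beta$ then gives a $3$-ball bounded by $D\cup D'$, and an isotopy of $S$ across this ball replaces $D'$ by a push-off of $D$, strictly reducing the meridian count and contradicting the minimality of $|\partial\cS|$.

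For $\partial$-incompressibility, consider a $\partial$-compressing disk $D\subset\cM$ with $\partial D=\alpha\cup\beta$, $\alpha\subset\cS$ essential, $\beta\subset\partial\cM$, and split on the location of $\beta$. If $\beta$ lies on an unfilled boundary component of $\cM$, then $D$ is a $\partial$-compressing disk for $S$ in $M_\beta$; by $\partial$-incompressibility of $S$ we obtain a disk $D'\subset S$ with $\partial D'=\alpha\cup\beta'$, $\beta'\subset\partial S$. Essentiality of $\alpha$ in $\cS$ forces $D'$ to contain at least one meridian disk, and incompressibility of the unfilled boundary torus in $M_\beta$ lets me cap $\beta\cup\beta'$ with a disk $D''\subset\partial M_\beta$; the sphere $D\cup D'\cup D''$ bounds a ball by irreducibility of $M_\beta$, and the corresponding isotopy of $S$ reduces the meridian count, a contradiction.

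If instead $\beta$ lies on a filled torus $\partial V_i$, then since $\beta$ meets $\partial\cS$ only at its endpoints, its interior lies in a single annular region of $\partial V_i\setminus\partial\cS$. I would close $D$ off with a disk $D_V\subset V_i$ whose boundary is $\beta\cup\gamma$, choosing $\gamma\subset V_i$ (via the universal cover of $V_i$) so that $\beta\cup\gamma$ is nullhomotopic in $V_i$; after a further isotopy, $\gamma$ can be arranged to lie on $S$, running through the relevant meridian disks $E_a, E_b$ joined by a chord in $\cS$. The disk $D\cup D_V\subset M_\beta$ then has boundary a loop on $S$, and the incompressibility-style argument applies: the loop bounds a disk $D'$ in $S$; irreducibility of $M_\beta$ produces a ball bounded by $D\cup D_V\cup D'$; and the resulting disk swap replaces $D'$ by $D\cup D_V$ while controlling the meridian disks in $D_V$ so that the net change decreases the meridian count. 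The technical heart of the proof is this last case: arranging $\gamma$ cleanly on $S$, handling the possibility that $\beta$ wraps longitudinally in $V_i$, and verifying that the net swap strictly decreases $|\partial\cS|$ (rather than merely redistributing meridian disks).
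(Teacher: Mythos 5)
Your argument for incompressibility is essentially the same as the paper's: a compressing disk $D$ for $\cS$ is coupled with a disk $D'\subset S$ by incompressibility of the fiber, $D'$ must contain meridians, and an isotopy across the ball bounded by $D\cup D'$ reduces the meridian count, contradicting minimality of $|\partial\cS|$.

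For boundary-incompressibility, however, you take a genuinely different route, and the second half of your case analysis has a real gap. The paper does \emph{not} use minimality of $|\partial\cS|$ here at all, and it does not split into cases according to which torus the arc $a$ lies on. Instead it notes that $a$ lies in an annular component $A$ of $\partial\cM\ssm\partial\cS$, cuts $A$ along $a$, attaches two parallel copies of the boundary-compression disk $E$ to obtain a genuine \emph{compressing} disk for $\cS$, and then invokes the incompressibility just established: the resulting disk is parallel to a disk in $\cS$, forcing $\cS$ to be a boundary-parallel annulus, which is absurd. This banding argument is uniform across filled and unfilled boundary tori, and it bootstraps off the first half of the lemma rather than appealing to the complexity-minimizing choice of $\cS$ a second time.

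Your approach instead tries to replicate the disk-swap argument, splitting on whether $\beta$ lies on an unfilled torus of $\cM$ or on $\partial V_i$ for a filling solid torus. The unfilled case is plausibly repairable, but the filled-torus case is left at the level of a sketch, and this is where the difficulty actually lives. The arc $\beta$ sits in an annulus of $\partial V_i\ssm\partial\cS$ whose core is a \emph{meridian} of $V_i$, and an essential arc there may wind any number of times around that core, so there is no canonical closing arc $\gamma$ on the meridian disks making $\beta\cup\gamma$ null-homotopic in $V_i$; the arc you construct, threading through meridian disks and a chord of $\cS$, need not be embedded or controllable, the closing disk $D_V$ then passes through several meridian disks, and the resulting disk swap mixes the roles of $D$, $D_V$, and $D'$ in a way that does not obviously decrease $|\partial\cS|$. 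You flag this yourself as ``the technical heart,'' but it is left undone, and it is precisely the part that would need a real argument. In short, the paper's boundary-incompressibility proof replaces this case analysis with a short banding construction that sidesteps the issue entirely; if you wanted to push your approach through you would need to supply the missing analysis of the filled-torus case, and I would recommend instead adopting the banding argument.
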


\begin{proof}
The proof is standard, but we sketch it for completeness.

Suppose that $\cS$ is compressible and let $D$ be a compressing disk for $\cS$ in $\cM$. Since $S$ is incompressible in its filling $M_\beta$, this disk shares a boundary with a disk $D'$ in $S$, which must meet the cores of the filling tori. Swapping out $D'$ for $D$, which can be done by an isotopy in $M_\beta$, results in a copy of $S$ meeting the cores of the solid tori fewer times, a contradiction. Hence, $\cS$ is incompressible.

Now if $\cS$ is boundary-compressible let $E$ be a boundary compression, so that
$\boundary E$ is the union of an arc $b$ in $\cS$ and an arc $a$ in $\boundary
\cM$. Then $a$ is contained in an annular component $A$ of $\boundary \cM \ssm
\boundary \cS$, and it must be essential there since otherwise $E$ is not an essential
boundary compression. Now $A$ cut along $a$ and attached to two parallel copies of
$E$ gives us a compressing disk for $\cS$, which therefore is parallel to a disk of $\cS$
by incompressibility. But this disk, attached to itself along $b$, forms an annulus
which must be all of $\cS$, so that $\cS$ is a boundary-parallel annulus. This is a contradiction.
\end{proof}

Now Proposition \ref{prop:branched finiteness} gives us a finite
collection $\B_1\ldots,\B_n$ of incompressible branched surfaces in
$\cM$ so that each $\cS\in \mathring\SS$ is fully carried in a
weight-minimizing way in one of them.

For each $\B_i$ let $\SS_{\B_i}$ contain those $(S,\beta)\in\SS$ for which $\cS$ is fully
carried by  $\B_i$. After restricting to ``sufficiently long'' fillings in this
collection, we can make some additional assumptions about $\B_i$:

Let $T_j$ be a component of $\boundary \cM$. If, for $(S,\beta)\in \SS_{\B_i}$, the
coordinate $\beta_j$ only takes on finitely many values, we can exclude all of the
non-$\infty$ values in our definition of sufficiently long. Then unless $\infty$ is also
one of the values, we can ignore $\B_i$ altogether. 

Thus we can assume that for each boundary component $T_j$ of $\cM$ meeting $\B_i$, 
either the fibers of $\SS_{\B_i}$ determine infinitely many slopes in $T_j$ or $T_j$ is not
filled in any of the fillings corresponding to $\SS_{\B_i}$. 
In particular, this means that
each boundary torus $T_j$ of $\partial \cM$ is either disjoint
from $\B_i$,
meets it in a train track and the
complements of these tracks are bigons, or meets $\B_i$ in a disjoint union of parallel simple closed curves in $T_i$ whose complement is a union of annuli.  In the last case $\beta_j=\infty$ for all
$(S,\beta)\in\SS_{\B_i}$ with $\beta$ sufficiently long.

\subsection{Branched surface decomposition}
\label{S:branched surface decomp}
From here on, we work with a single $\B = \B_i$, restricted as above.
The notion of ``sufficiently long filling" will
depend on the particular branched surface $\B$, but finiteness of the
number of branched surfaces provides a uniform notion of sufficiently
long filling, independent of $\B$.
We continue to write $\N = \N(\B)$ for the fibered neighborhood of $\B$ in $\cM$.

Divide $\partial \cM$ into the tori that meet $\B$ -- called
$\partial^B\cM$ -- and the rest, $\partial^F\cM$ ($F$ for
``floating''). 

Given $(S,\beta)\in \SS_\B$, 
let $V_\beta$ be the union of solid tori associated to the
fillings, $M_\beta= \cM \union V_\beta$.  We write $V_\beta^B$ and $V_\beta^F$ for the solid
tori that meet and do not meet $\B$, respectively.
We remark that $\partial V_\beta^F = \partial^F\cM$ and $\partial V_\beta^B \subset \partial^B\cM$ with this containing being proper if there are unfilled boundary components of $\cM$ for $\beta$.

Now let $X = \cM \ssm \N(\B)$ and set  $X_\beta = X \union V_\beta^F$. 
We divide $\partial X_\beta$ into $\partial X_\beta = \partial_h X_\beta \cup \partial_v X_\beta$, where $\partial_h X_\beta = \partial_h \N$, and $\partial_v X_\beta$ consists of annuli which are either annulus components of $\partial_v \N$, annulus components of $X \cap \partial \cM$, or unions of rectangle components of $\partial_v \N$ with bigon regions in $\partial \cM \ssm \N$.  We similarly decompose $\partial X = \partial_hX \cup \partial_v X$, where $\partial_h X = \partial_h X_\beta$, and $\partial_v X = \partial_v X_\beta \cup \partial^F \cM$.

  The foliation by interval fibers of $\N(\B)$ extends first to a
  foliation on each of the non-floating boundary tori $\partial^B\cM$; this is because the
  complement of $\boundary_v\N$ in $\boundary^B\cM$ is a union of bigons and annuli.
  For each $\beta$, the meridians are transversal to this foliation so it
  extends to a foliation of the solid tori $V^B_\beta$, which is
  transverse to (a fixed family of)  meridian disks.

  Call the resulting foliation $\I$, and note that it is defined in $M_\beta \ssm \inter(X_\beta) = \N \cup \partial^B\cM \cup V^B_\beta$.

\begin{figure}[htbp]
\begin{center}
\includegraphics[ width = 1 \textwidth ]{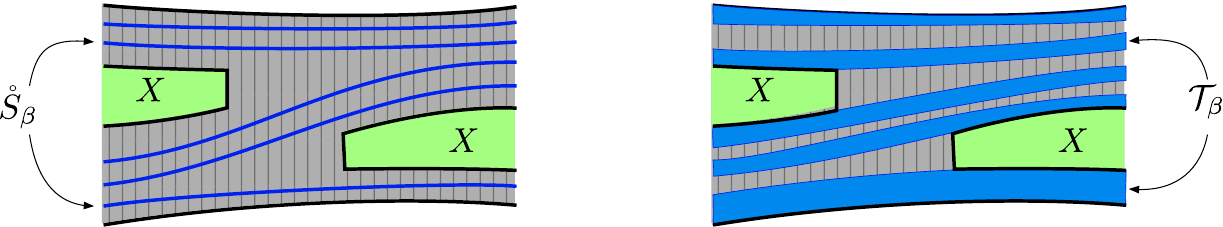}
\caption{Left: $\cS_\beta$ is carried by $\N$. Right: $S_\beta$ is thickened to the
  $I$-bundle $\T_\beta$ (blue). The $I$-foliation is shown in the complementary part of
  $\N$, which is part of $\FF_\beta$.}
\label{fig:schematic}
\end{center}
\end{figure}

\begin{figure}[htbp]
\begin{center}
\includegraphics[ width = .6 \textwidth ]{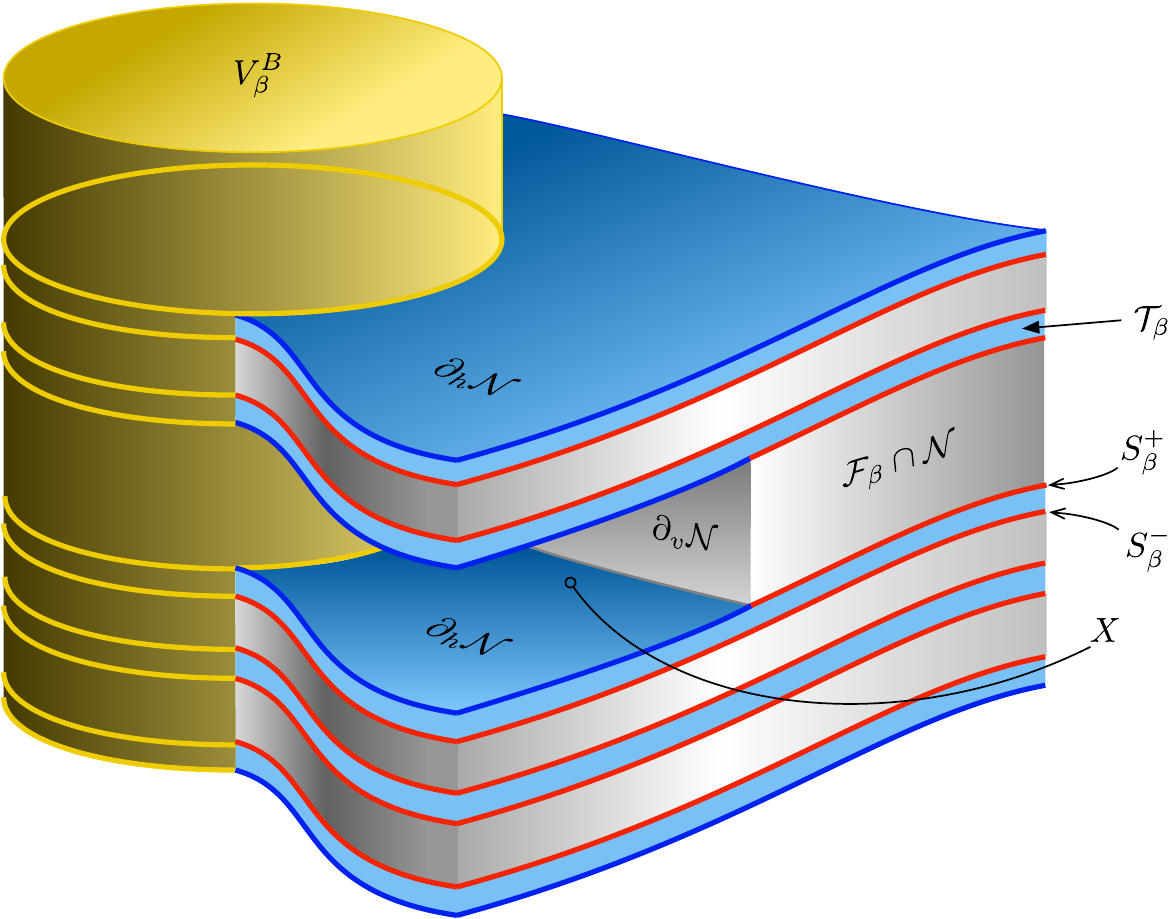}
\caption{A cutaway view of $\N$ near a component of $V^B_\beta$. The $I$-bundle $\T_\beta$
  is indicated in blue inside $\N$ and the complementary $I$-bundle $\FF_\beta$ is in
  gray. The surfaces $S^\pm_\beta$ are indicated with blue edges in $\boundary_h\N$, with
  yellow along the meridians, and with red edges in the remainder $W_\beta$.}
\label{fig:B3d}
\end{center}
\end{figure}

\subsection{Product regions in $M_\beta$}
\label{S:product regions}

Fix $(S,\beta)\in \SS_\B$, and for convenience denote $S$ by $S_\beta$.
We can realize $S_\beta$  as a surface contained in $\N(\B)$ union the solid
tori $V_\beta^B$, in which it is transverse to the foliation $\I$. 
Thicken
$S_\beta$ to make a product $I$-bundle $\T_\beta$ with $I$--fibers being arcs of leaves of $\I$, then isotope $\T_\beta$ 
so that
its boundary contains $\partial_h\N(\B)$. 
This is done by pushing the
boundary surfaces outward along $\I$-leaves until they touch the
endpoints. See Figure \ref{fig:schematic} for a schematic of this, and Figure
\ref{fig:B3d} for a 3-D view near a component of $V_\beta$.

Let $\FF_\beta$ be the closure of
$M_\beta \ssm \T_\beta$. Since $S_\beta$ is a fiber of $M_\beta$, $\FF_\beta$ is also a product
$I$-bundle. We use the $I$-bundle structure to define $\boundary_v$ and $\boundary_h$ for
both $\FF_\beta$ and $\T_\beta$, writing
\[ \partial_h \FF_\beta = \partial_h \T_\beta= S_\beta^+\sqcup S_\beta^-,\]
and noting that $\partial_v \FF_\beta \cup \partial_v\T_\beta = \partial M_\beta$.

Note that $\FF_\beta$ contains $X_\beta$, 
$\FF_\beta\ssm X_\beta$ is contained in $\N \union V^B_\beta$,  and the foliation
$\I$ restricts to a foliation $\I|(\FF_\beta \ssm X_\beta)$.
Hence, $\FF_\beta$ decomposes into the `$I$-foliated part' $\I|(\FF_\beta \ssm X_\beta)$
and the `bounded part' $X_\beta$, as anticipated by our discussion in Section \ref{sec:outlines}.

Our main goal now is to show that $\I|(\FF_\beta \ssm X_\beta)$ is 
(up to isotopy) also the restriction of the product interval foliation on $\FF_\beta \cong
S_\beta\times[0,1]$.  For a summary of our strategy, see the outline in Section \ref{sec:outlines}.
From this we will deduce that $\I$ is the foliation by flow lines of 
the suspension flow for the monodromy of the fiber $S_\beta$ of the given fibration of
$M_\beta$. This is completed in Proposition \ref{Align foliations}.

\subsection{Regions in $\boundary_h\FF_\beta$ and hyperbolic geometry}
\label{S:area arguments}

Let $W_\beta$ denote the closure of $\partial_h\FF_\beta \ssm \partial_h X$. Note that
$W_\beta$ is the union 
of the meridian disks $\D_\beta = \partial_h\FF_\beta \intersect V_\beta^B$ and regions that are
contained in the interior of $\N(\B)$. 

Note that  $\boundary_h X$ is a subsurface with corners in $\boundary_h \FF_\beta
\intersect \cM$ in the sense of Section \ref{sec:wald} -- it is bounded by circles and arcs whose endpoints are on the
circles of $\boundary_h \FF_\beta \intersect \boundary\cM$ -- some of which are boundaries
of the meridian disks and some can be in $\boundary M_\beta$ itself, when
that is nonempty.
See Figures \ref{Fig:boundaryF} and \ref{fig:Econtact} for some example local configurations. 

\begin{figure}[htbp]
\begin{center}
\includegraphics[width = .9 \textwidth]{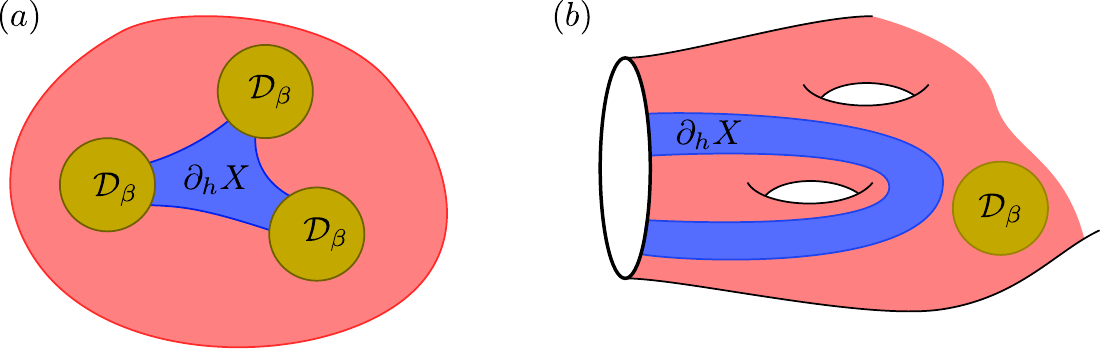}
\caption{
Examples of the decomposition of $\boundary_h\FF_\beta$. $W_\beta$ is the union of red
and the yellow meridians. 
(a) shows a component of $W_\beta$ of low complexity containing meridians, which is
ruled out for long fillings by Lemma \ref{meridians in W}. (b) shows part of a larger component of
$W_\beta$ adjacent to the boundary of $M_\beta$.}
\label{Fig:boundaryF}
\end{center}
\end{figure}

For any component $E$ of $W_\beta$, let $\hat E$ denote the union of $E$ with any disks of
$\boundary_h\FF_\beta \ssm \inter(E)$ that meet $\partial(\boundary_h\FF_\beta)$ in at most one arc. Note
that $E$ and $\hat E$ are  subsurfaces with corners of $\boundary_h\FF_\beta$ in the sense of Section \ref{sec:wald}.

Lemmas \ref{meridians in W} and \ref{no trivial components} restrict the structure of
components of $W_\beta$ for sufficiently long fillings.

\begin{lemma}\label{meridians in W}
Fix an integer $k$. For all sufficiently long fillings, if $E$ is a component of $W_\beta$ with $\hat\chi(\hat E)=k$, then $\hat E$ is disjoint from $\D_\beta$.
\end{lemma}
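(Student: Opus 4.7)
The plan is to reduce this to a finiteness statement about subsurfaces of a fixed surface in $\cM$. The idea is that if $\hat E$ has bounded $\hat\chi$, then its intersection with $\cM$ is modeled---after collapse along the foliation $\I$---on one of finitely many subsurfaces of the fixed surface $\boundary_h\N$, and each such model forces the meridian slopes of any filling tori that $\hat E$ meets to lie in a finite set. Enlarging the excluded sets $K_j$ to contain these finitely many slopes then gives the required notion of ``sufficiently long''.

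The key observation is that $\boundary_h X = \boundary_h\N(\B)$ is a fixed compact surface in $\cM$, depending only on $\B$ and not on $\beta$. The foliation $\I$ provides a product structure on a regular neighborhood of $\boundary_h\N$ in $\N$, and extends across the bigons of $\boundary^B\cM \ssm \N$ to give a well-defined projection of any curve on $\boundary^B\cM$ transverse to the track $\B \cap \boundary^B\cM$ onto the $1$-complex $\boundary\boundary_h\N$. Now consider $\hat E \cap \cM \subset \boundary_h\T_\beta$, which is transverse to $\I$ by construction: collapsing along $\I$-leaves produces a subsurface $R_\beta \subset \boundary_h\N$ with $\hat\chi(R_\beta) \leq \hat\chi(\hat E) = k$. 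The part of $\boundary R_\beta$ that lies in $\boundary\boundary_h\N \cap \boundary^B\cM$ encodes precisely the meridian slopes $\beta_j$ of the filling tori $V_j^B$ that $\hat E$ meets, via the $\I$-extension through the bigons.

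For the finiteness step, since $\boundary_h\N$ is a fixed compact surface of finite type and $R_\beta$ may be assumed essential in it (after the $\hat{}$ operation absorbs trivial disk components), the number of isotopy classes of $R_\beta$ with $\hat\chi(R_\beta) \leq k$ is finite; this is the standard fact that essential subsurfaces of a fixed compact surface with bounded modified Euler characteristic form finitely many isotopy classes. Each such isotopy class determines, via the $\I$-extension through each $\boundary_j\cM$, a specific finite set of slopes on $\boundary_j\cM$: the meridian slope $\beta_j$ must equal one of these whenever $\hat E$ contains a meridian disk of $V_j^B$. Enlarging the excluded sets $K_j$ to include these finitely many slopes then completes the proof.

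The main obstacle is formalizing the $\I$-collapse of $\hat E \cap \cM$ onto $R_\beta$ and the correspondence between the isotopy class of $R_\beta$ and the permitted meridian slopes on each $\boundary_j\cM$. Care will be required at the switches of the train track $\B \cap \boundary^B\cM$, where the $1$-foliation $\I$ on the torus is non-smooth, and in handling multi-sheeted regions of $\T_\beta$ inside $\N$, where $R_\beta$ can have components coming from intermediate sheets of the fiber. Once these technicalities are sorted out, the conclusion reduces to routine finiteness of essential subsurfaces together with the observation that finitely many non-$\infty$ slopes in each $\Delta_j$ can always be added to the excluded set.
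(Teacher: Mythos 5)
Your proposal takes a genuinely different route from the paper's, which is a \emph{geometric} argument: the paper identifies $\cM$ nearly isometrically inside the hyperbolic $M_\beta$, produces a ruled-surface homotope of $\hat E$ whose area is bounded by $-2\pi\hat\chi(\hat E)+\kappa$ (with $\kappa$ controlling the curvature of $\boundary\hat E\subset\boundary\boundary_h X$, a $\beta$-independent set), invokes Lemma~\ref{small surface homotopy} to push this small-area surface off the deep Margulis tubes, and then uses Proposition~\ref{prop:FHS} (least-area surfaces and minimality in the \emph{homotopy} class) to conclude. Unfortunately, the combinatorial replacement you propose has a gap that I do not see how to close.

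The critical step is your claim that the isotopy class of the $\I$-collapse $R_\beta\subset\boundary_h\N$ pins down the filling slopes $\beta_j$ up to finitely many choices. But the collapse along $\I$-leaves is a many-to-one map, and it is precisely the \emph{multiplicity} of the sheets over a given point of $\B$ (or $\boundary_h\N$) that records how many times the meridian $\partial D$ crosses the train track $\B\cap\boundary_j\cM$, i.e.\ the slope. Once you pass to the image $R_\beta$ \emph{as a subsurface up to isotopy}, that weight data is lost: a boundary circle of a subsurface of a fixed surface is an embedded curve, and two meridian curves of very different slopes, both transverse to the track, can collapse to isotopic subsurfaces. Related to this, the inequality $\hat\chi(R_\beta)\le\hat\chi(\hat E)$ is not established and is suspect: for fixed $\hat\chi(\hat E)=k$ the punctured surface $\hat E\cap\cM$ can have $\hat\chi$ arbitrarily negative, since nothing bounds the number of meridian disks $\hat E$ contains. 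Your argument also never uses the fact that $S_\beta$ was chosen to minimize intersection with the cores (which enters the paper's proof through Proposition~\ref{prop:FHS}), and it is hard to see how the lemma could hold without it --- one could always push a fiber to intersect the cores more, creating trivially small components of $W_\beta$ containing meridians. In short, the finiteness you want is a geometric fact (bounded-area surfaces cannot penetrate deep Margulis tubes), not a combinatorial one about subsurfaces of $\boundary_h\N$, and the reduction to the latter loses exactly the data that controls the slope.
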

Here $\hat\chi$ is modified Euler characteristic as in Section \ref{sec:wald}.

\begin{proof} 
  Consider first those $\beta$ for which $\boundary M_\beta = \emptyset$, in which case $\boundary_h\FF_\beta$ is a
  closed surface and $\hat\chi(\hat E) = \chi(\hat E)$.

  Identify $\cM$ once and for all with the complement of some standard cusps in the
  finite-volume hyperbolic metric on $\inter(\cM)$. Thurston's Dehn filling theorem tells us that $\cM$ embeds nearly
isometrically in the hyperbolic structure on $M_\beta$, for sufficiently long fillings, so that its
complement $\inter(V_\beta)$ consists of the Margulis tubes for the
corresponding curves. Moreover the radii of these tubes are
arbitrarily large for sufficiently long fillings $\beta$. 
Note that $\boundary\hat E$ is contained in $\boundary \boundary_h X$, which is independent of
$\beta$. There is therefore some bound $\kappa$, independent of $\beta$,  on the total curvature of
$\boundary \hat E$ in $M_\beta$. 

Fix a triangulation of $\hat E$ with vertices on the boundary so that each triangle has at
most one edge on the boundary. For each $\beta$ let $f_\beta \colon \hat E
\to M_\beta$ be a map which is a ruled surface on each triangle
and is homotopic rel
boundary to the inclusion map. Then the
Gauss-Bonnet theorem for the induced metric on $f_\beta(\hat E)$ gives us
$$
\Area(f_\beta(\hat E)) \le -2\pi\chi(\hat E) + \kappa.
$$
Note that the right hand side is independent of $\beta$.

The following lemma (whose proof appears below) now allows us to finish the proof:
  \begin{lemma}\label{small surface homotopy}
    Given $A\ge 0$ there is an $R\ge 0$ such that the following holds.  Let $V$
    be a hyperbolic Margulis tube of radius $R$ and let $W$ be a compact, connected, oriented
    surface with a map $f\colon(W,\boundary W) \to (V,\boundary V)$ such
    that $\Area(f(W)) < A$. Then $f$ is homotopic rel $\boundary W$ into
    $\boundary V$. 
  \end{lemma}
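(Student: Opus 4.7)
The plan is to reduce the statement to vanishing of a single integer-valued topological obstruction and then to control that integer via a calibration. Let $\gamma \subset V$ denote the core geodesic. Radial projection gives a deformation retraction $V \setminus \gamma \to \partial V$ that fixes $\partial V$ pointwise, so it suffices to homotope $f$ rel $\partial W$ to a map whose image lies in $V \setminus \gamma$. Since $V \simeq S^1$ and $\partial V \simeq T^2$ are aspherical, $\pi_n(V, \partial V) = 0$ for $n \geq 3$, and since $W$ is $2$-dimensional, the only obstruction to such a homotopy is the primary one, namely the algebraic intersection $f_*[W] \cdot [\gamma] \in H_2(V, \partial V) \cong \mathbb{Z}$. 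When this integer vanishes, pairs of oppositely-signed transverse intersections of (a generic perturbation of) $f$ with $\gamma$ can be cancelled by homotopy rel $\partial W$, and then composition with the radial retraction yields the desired homotopy into $\partial V$.

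To force this integer to be zero once $R$ is large, I would construct an explicit calibrating $2$-form. Lift to the universal cover $\tilde V = N_R(\tilde\gamma) \subset \mathbb{H}^3$ and use cylindrical coordinates $(r,\theta,z)$ about $\tilde\gamma$ in which the hyperbolic metric reads
\[
dr^2 + \cosh^2(r)\, dz^2 + \sinh^2(r)\, d\theta^2.
\]
The $2$-form $\omega_0 := \sinh(r)\, dr \wedge d\theta$ is closed, is invariant under every screw motion $(r,\theta,z)\mapsto(r,\theta+\phi,z+\ell)$ (and hence descends to $V$), and has pointwise hyperbolic norm identically equal to $1$, since $dr \wedge d\theta$ has norm $1/\sinh(r)$ in this metric. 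Its integral over a meridian disk $D = \{z = \mathrm{const}\}$ is
\[
\int_D \omega_0 \;=\; \int_0^{2\pi}\!\!\int_0^{R} \sinh(r)\, dr\, d\theta \;=\; 2\pi(\cosh R - 1),
\]
so rescaling to $\omega := \omega_0/\bigl(2\pi(\cosh R - 1)\bigr)$ gives a closed form that pairs to $1$ with the generator of $H_2(V,\partial V) \cong \mathbb{Z}$ and so represents the Poincar\'e--Lefschetz dual of $[\gamma]$, while $\|\omega\|_\infty = 1/\bigl(2\pi(\cosh R - 1)\bigr)$.

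Assembling the pieces via the pointwise calibration inequality gives
\[
\bigl|\,f_*[W]\cdot[\gamma]\,\bigr| \;=\; \left|\int_W f^*\omega\right| \;\le\; \|\omega\|_\infty \cdot \Area\bigl(f(W)\bigr) \;<\; \frac{A}{2\pi(\cosh R - 1)}.
\]
Choosing $R$ so that $2\pi(\cosh R - 1) > A$ (for instance any $R > \cosh^{-1}(1 + A/(2\pi))$) then forces the integer on the left to vanish, and the topological reduction above completes the argument. I expect the main point of care to be the topological reduction---verifying that the intersection with $\gamma$ is really the unique obstruction and that its vanishing yields a rel-boundary homotopy off of $\gamma$---rather than the calibration estimate, which is a direct computation in cylindrical coordinates.
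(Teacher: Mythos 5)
Your proposal is correct and uses essentially the same strategy as the paper: the closed calibrating form $\sinh(r)\,dr\wedge d\theta$, its integral $2\pi(\cosh R - 1)$ over a meridian disk, the bound of $|\int_W f^*\omega|$ by $\|\omega\|_\infty\cdot\Area(f(W))$, and the conclusion that the relevant integer vanishes once $R$ is large enough. The only difference is in the topological step: the paper composes $f$ with the fiberwise orthogonal projection $p\colon V\to D^2$ to meridian disks, identifies the integer as $\deg(p\circ f)$, and invokes the relative Hopf degree theorem to homotope $p\circ f$ rel $\partial W$ into $\partial D^2$; you instead phrase it as an obstruction-theoretic statement about the intersection number with the core geodesic $\gamma$ and removal of intersection points, followed by radial retraction. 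These are equivalent. One small remark: your phrase about ``cancelling pairs of oppositely-signed transverse intersections'' should be taken as shorthand for the obstruction-theoretic claim you make, not as a straightforward local Whitney-trick style cancellation (which is delicate in dimension $3$); the cleanest rigorous form of that step is exactly the projection-plus-Hopf-degree argument the paper gives.
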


  Applying this to each intersection of $f_\beta(\hat E)$ with the Margulis tubes $V_\beta$, 
  and choosing $\beta$ long enough to give the needed value for the tube radii,
we find that
we can homotope $\boundary_h\FF_\beta$ in $M_\beta$ (and hence
$S_\beta$ in $M_\beta$) to remove its intersections with the
cores of $V_\beta$ that occur in $\hat E$. 
Since $S_\beta$ was already chosen to minimize these intersections in its isotopy class,
and Proposition \ref{prop:FHS} says that it must also minimize them in its homotopy
class, we conclude that $\hat E$ could not in fact have contained any meridian disks. 

Now consider those $\beta$ for which some specific set of
coordinates is $\infty$. The corresponding tori are unfilled so
$\boundary M_\beta$ is nonempty, and each component is associated to a
cusp in $M_\beta$. We adapt the argument to handle these cusps.

We identify $M_\beta^\partial \equiv \cM \ssm \partial M_\beta$ with the complement of the remaining standard cusps
in the finite-volume hyperbolic structure on $\inter(\cM)$. 
Again for sufficiently long fillings we can embed this nearly
isometrically in the hyperbolic structure on $\inter(M_\beta)$ as the
complement of the Margulis tubes of the filled boundary components. 

Let $\hat E_\beta = \hat E\cap M^\boundary_\beta$, which is $\hat E$
minus the arcs and curves of its boundary that lie in $\boundary
M_\beta$. We can (after suitable isotopy of the hyperbolic metric) assume that the boundary
arcs of $\hat E_\beta$ are, in some neighborhood of the boundary,
totally geodesic rays exiting the cusps. Thus the ends of $\hat
E_\beta$ can be deformed to finite-area cusps or ``spikes'' (regions
between two asymptotic geodesics).

Now our triangulation of $\hat E_\beta$ can be chosen with an ideal
vertex at each end of the surface, and when we homotope it rel
boundary (and rel ideal boundary points) to a ruled surface, the 
Gauss-Bonnet theorem applies again, but with an additional $\pi$ in the
boundary term for each spike. Thus we have
$$
\Area(f_\beta(\hat E_\beta)) \le -2\pi\hat\chi(\hat E) + \kappa.
$$
Lemma \ref{small surface homotopy} again applies, allowing us for
sufficiently long fillings to find
a proper homotopy of $\hat E_\beta$ rel boundary and rel ends which
removes all intersections of $\hat E$ with $V_\beta$. A standard
argument in the collar of $\boundary M_\beta$ allows us to obtain a
homotopy of $\hat E$ itself which does the same thing. Again we
conclude that $\hat E$ could not have contained any meridian disks. 
\qedhere

\end{proof}

We now supply the proof of Lemma \ref{small surface homotopy}.
\begin{proof}
We can write $V = D^2 \times S^1$, where $D^2\times\{t\}$ are totally geodesic meridian
disks for $t\in S^1$, and let $p:V\to D^2$ be the projection, in such a way that $p$ is
area preserving on the meridian disks. 
    
We can write the area form of the meridian disks of $V$ 
explicitly in cylindrical coordinates $(r,\theta,z)$ in the universal
cover of $V$, namely $\alpha = \sinh r\, dr\wedge d\theta$. Note that
this is closed, and evaluates to $2\pi(\cosh R-1)$ on a meridian disk.
Thus $\alpha$ represents $2\pi(\cosh R -1)p^*(\eta)$ where $\eta$ is the fundamental class
of $H^2(D^2,\boundary D^2)$. 

Now the map $(p \circ f)_* : H_2(W,\boundary W) \to
H_2(D^2,\boundary D^2)$ is just multiplication by an integer, the degree $\deg(p \circ
f)$. We can therefore compute this degree
by integrating $\alpha/2\pi(\cosh R-1)$. That is,
$$
\deg (p \circ f) = {1\over 2\pi(\cosh R-1)}\int_W f^*\alpha.
$$
On the other hand, 
because (fiberwise) orthogonal projection in the tangent bundle of $V$ to the meridian
disk direction is 
contracting, we also have 
$$
\left|\int_Wf^*\alpha\right| \le \Area(f(W)) < A.
$$
Thus we have
$$ |\deg(p \circ f)| < A/2\pi(\cosh R-1).$$
Since the degree is an integer, for suitably large $R$ we conclude $\deg(p \circ f) = 0$.

  Now a relative version of the Hopf Degree Theorem (see for example the
   Extension Theorem in \cite[Chapter 3]{guillemin2010differential}) tells us that,
   since $\deg(p \circ f)=0$, there is a homotopy $G$ rel $\boundary W$ taking $p \circ f$
   to a map $g$ that takes values in $\boundary D^2$. Writing $f = (f_1,f_2)$ where $f_1 = p \circ f$ and applying the homotopy $G$ to the first coordinate completes the proof.
 \end{proof}

Recall from Section~\ref{sec:wald} that a component of $W_\beta$ is called \emph{trivial} if it is contained in a disk $D$ which is either contained in the interior of $S_\beta^\pm$ or meets $\partial S_\beta^\pm$ in a single arc. Note that if a component $E$ of $W_\beta$ is not trivial, then $\partial \hat E \ssm \partial S_\beta^\pm$ consists of homotopically essential curves and essential proper arcs in $S_\beta^\pm$.

 \begin{lemma}\label{no trivial components}
   For sufficiently long fillings,    $W_\beta$ has no trivial
   components.
  \end{lemma}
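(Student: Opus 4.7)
My plan is to argue by contradiction: if $W_\beta$ admits a trivial component $E$ for arbitrarily long fillings $\beta$, I will identify the augmented surface $\hat E$ with a disk (or half-disk) of contact for $\B$, contradicting the incompressibility of $\B$ from Section \ref{sec:branched}. This matches the strategy sketched at the end of Section \ref{sec:outlines}.

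The first step is to bound $\hat\chi(\hat E)$ uniformly in $\beta$. Because $E$ sits in a disk $D \subset S_\beta^\pm$ meeting $\partial S_\beta^\pm$ in at most one arc, $E$ is planar, and by the Jordan curve theorem each inner boundary circle of $E$ bounds a disk in $D \ssm E$. These inner disks are precisely the disk components of $\partial_h \FF_\beta \ssm \mathrm{int}(E)$ meeting $\partial(\partial_h \FF_\beta)$ in at most one arc, so they get absorbed in the formation of $\hat E$. It follows that $\hat E$ is itself a disk contained in $D$, with $\hat\chi(\hat E) \in \{1/2, 1\}$.

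With this uniform bound in hand, I would apply Lemma \ref{meridians in W} to conclude that $\hat E \cap \D_\beta = \emptyset$ for all sufficiently long fillings. Combined with the inclusion $\hat E \subset \partial_h \FF_\beta \subset \N(\B) \cup V_\beta^B$ from the construction, the absence of meridian disks forces $\hat E \subset \N(\B)$. Since $\hat E$ lies in the fiber surface $\partial_h \FF_\beta$, it is transverse to the $I$-fibers of $\N$; and once the inner capping disks have been absorbed, the boundary of $\hat E$ lies on $\partial(\partial_h \N) \cup \partial M_\beta \subset \partial_v \N \cup \partial M_\beta$. A small perturbation pushing the $\partial_v \N$-portion of $\partial \hat E$ into the interior of $\partial_v \N$ then exhibits $\hat E$ as a disk of contact (when $D \subset \mathrm{int}(S_\beta^\pm)$) or a half-disk of contact (when $D$ meets $\partial S_\beta^\pm$), contradicting the incompressibility of $\B$.

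The main obstacle I expect is the bookkeeping needed to confirm that $\hat E$ is genuinely a disk with $\partial \hat E$ in the interior of $\partial_v \N \cup \partial M_\beta$, rather than picking up stray branch arcs or leaving non-disk complementary components behind. Verifying this requires a careful local analysis of how $D$ interacts with the branched surface neighborhood $\N(\B)$, using the fact that $\cS_\beta$ has minimum weight in its isotopy class and is fully carried by $\B$, together with the observation that the absorption process in the definition of $\hat E$ is precisely what strips off the portions of $\partial E$ running through $\mathrm{int}(\partial_h \N)$.
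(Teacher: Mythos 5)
Your argument is correct and matches the paper's proof almost step for step: both identify $\hat E$ as a disk (or half‑disk), invoke Lemma~\ref{meridians in W} with the uniform bound on $\hat\chi(\hat E)$ to rule out meridians for sufficiently long fillings, conclude $\hat E\subset\N$ with boundary along $\partial_v\N$ (and possibly $\partial M_\beta$), and slide the boundary into $\interior(\partial_v\N)$ to exhibit a disk or half‑disk of contact, contradicting incompressibility of $\B$. The only cosmetic difference is that you spell out the absorption step showing $\hat E$ is a disk, which the paper states more tersely.
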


\begin{figure}[htbp]
\begin{center}
\includegraphics[width = .7 \textwidth]{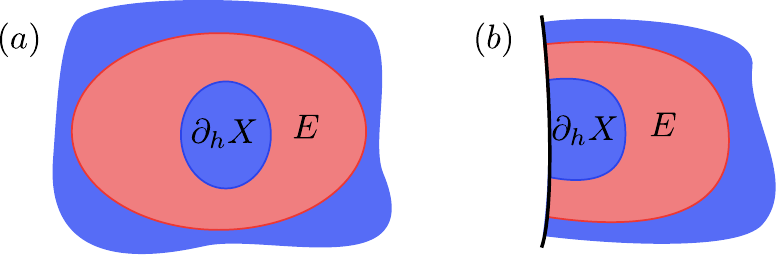}
\caption{Two trivial components of $W_\beta$ which result in a disk and half-disk of contact.}
\label{fig:Econtact}
\end{center}
\end{figure}

\begin{proof}
First suppose that a component $E$ of $W_\beta$ were contained in a disk that does not meet $\boundary( \boundary_h\FF_\beta)$.
Then $E$ would be a (possibly punctured) disk,
so $\hat E \subset \boundary_h\FF_\beta$ would also be a disk not meeting $\partial( \boundary_h\FF_\beta)$.

Lemma \ref{meridians in W} 
implies that for sufficiently long fillings, $\hat E$ (and hence $E$) contains no meridians. 
In particular $\partial \hat E$ must be a smooth boundary component of
$\partial_h X$ (as in Figure \ref{fig:Econtact}(a)). That is, $\hat E \subset \N$ and $\partial \hat E \subset \partial_v \N$. 
But then, as in \cite[Claim 1]{floyd1984incompressible}, we can isotope $\hat E$ to slide $\partial \hat E$ 
into $\interior (\partial_v \N)$ thereby producing a disk of contact for $\B$. See Figure \ref{fig:contact}.
\begin{figure}[htbp]
\begin{center}
\includegraphics[width = .5 \textwidth]{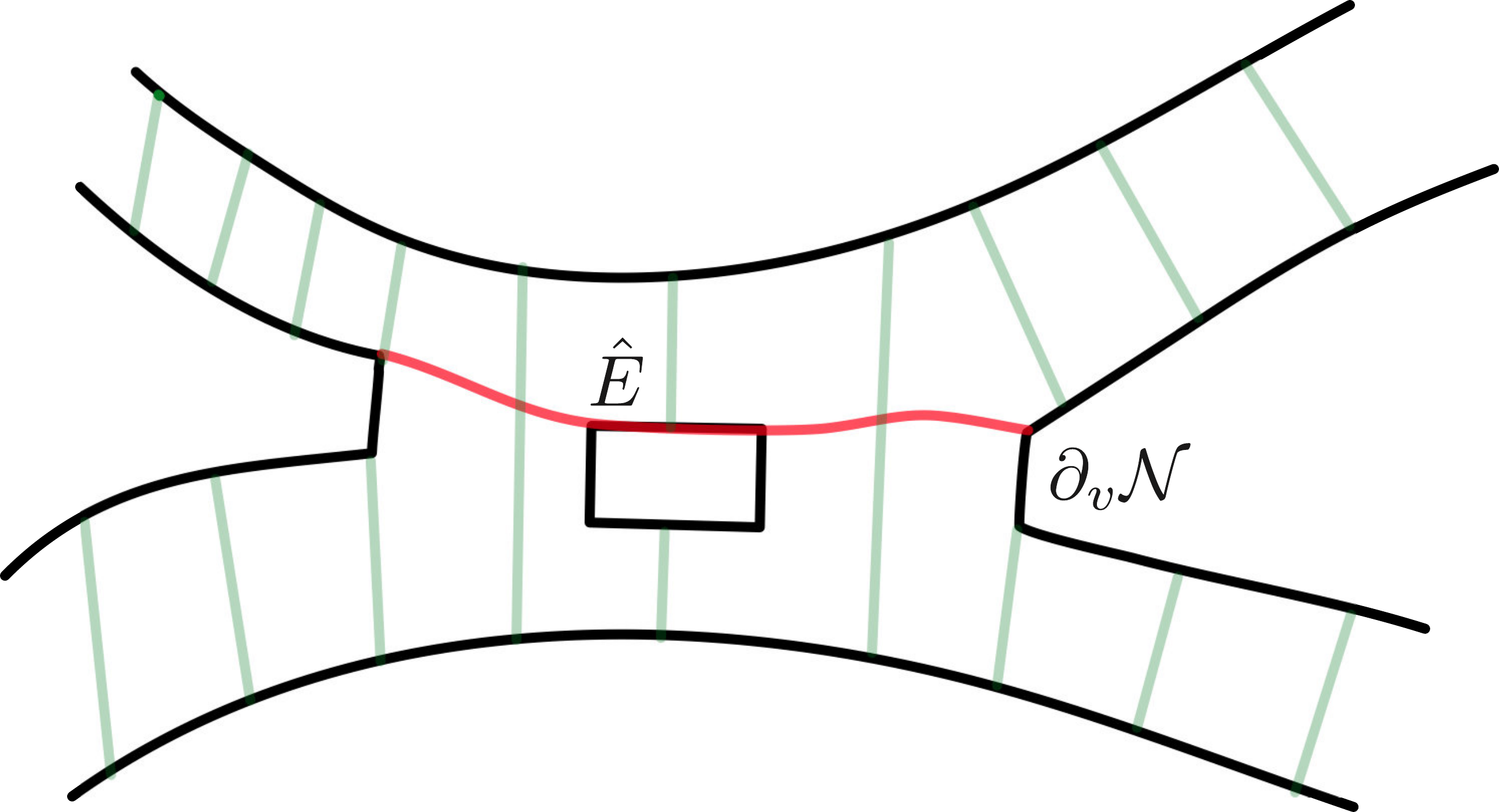}
\caption{A disk $\hat E$ (red) in $\N$ with boundary in $\partial_v \N$. Sliding $\partial \hat E$ slightly downward produces the disk of contact. }
\label{fig:contact}
\end{center}
\end{figure}
This is a contradiction and the proof is complete in this case.

If instead $E$ were contained in some disk in $\boundary_h\FF_\beta$ which meets $\partial
(\boundary_h\FF_\beta)$ in a single arc, then $\hat E$ would also be a disk whose boundary
has a single arc in common with $\partial (\boundary_h\FF_\beta)$ (as in Figure
\ref{fig:Econtact}(b)). Just as in the previous case, Lemma \ref{meridians in W} implies that $\hat E$ contains no meridians and so is contained in $\N$.
This time since $\partial E$ consists of one arc in $\partial \cM$ and one arc in $\partial_v \N$
we see that $\hat E$ produces a half-disk of contact for $\B$ and also results in a contradiction.
\end{proof}

\subsection{Transverse orientability of $\B$}

\begin{lemma}\label{transverse orientation}  
The branched surface $\B$ is transversely orientable.  Equivalently, the foliation $\I$, defined on $M_\beta \setminus X_\beta = \N(\B) \cup V^B_\beta$, is orientable.
\end{lemma}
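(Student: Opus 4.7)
My plan is to argue by contradiction using the minimality of the complexity pair $(|\partial\cS|, w(\cS))$. Suppose $\I$ is not orientable. Since $\T_\beta$ is a product $I$-bundle whose fibers are arcs of $\I$-leaves, the sub-foliation $\I|_{\T_\beta}$ carries the coherent orientation pointing from $S_\beta^-$ to $S_\beta^+$. If every $I$-fiber of $\FF_\beta\cap(\N\cup V^B_\beta)$ had one horizontal endpoint on $S_\beta^+$ and the other on $S_\beta^-$, then this orientation would extend across all of $\N\cup V^B_\beta$, contradicting non-orientability. Thus some component $Q$ of $\FF_\beta\cap(\N\cup V^B_\beta)$ has $I$-fibers with both horizontal endpoints on a single component of $\partial_h\FF_\beta$, which we may take to be $S_\beta^+$.

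I would then use $Q$ to construct an isotopy of $\cS$ that strictly decreases $w(\cS)$. Via the $I$-fibration of $Q$, the horizontal boundary of $Q$ consists of subsurfaces of $S_\beta^+$ identified in pairs (or via an involution, in the twisted case); these are the push-offs, through $\T_\beta$-fibers, of corresponding subsurfaces $\cS_1, \cS_2\subset\cS$ lying on the $S_\beta^+$-side. I would define an isotopy of $\cS$ supported in a regular neighborhood of $\cS_1\cup Q\cup\cS_2$ which sweeps $\cS_1$ along the $I$-fibers of $Q$ onto $\cS_2$. Along each fiber $f$ of $Q$, the full $\I$-leaf through $f$ meets $\cS$ in two consecutive points bracketing $f$, and the sweep cancels this pair. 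Because $\B$ is adapted to ${\bf t}$, each component of $\partial_v\N$ contains an arc of ${\bf t}^{(1)}$ as an $I$-fiber; at least some such arcs lie among the fibers of $Q$, so the sweep strictly decreases $|\cS\cap{\bf t}^{(1)}|$. Since the isotopy is supported in the interior of $\cM$, it preserves $|\partial\cS|$, so the new surface $\cS'$ has $|\partial\cS'|=|\partial\cS|$ and $w(\cS')<w(\cS)$, contradicting the lexicographic minimality in the choice of $\cS$.

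The main technical difficulty I anticipate is verifying that the sweep can be realized as an embedded isotopy of $\cS$. Concretely, one must check that the identification of the two horizontal boundary pieces of $Q$ coming from its $I$-fibration agrees with the identification obtained by composing the push-off maps $\cS_1\to Q_1$ and $Q_2\to \cS_2$ through $\T_\beta$-fibers, so that $\cS_1$ lands exactly on $\cS_2$ under the sweep without creating self-intersections. This should reduce to a local analysis on each $I$-fiber of $Q$, where the sweep realizes the standard cancellation of a pair of consecutive transverse intersections of $\cS$ with the underlying $\I$-leaf.
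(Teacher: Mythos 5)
The step you flag as the ``main technical difficulty'' is in fact where the entire content of the proof lies, and your proposal does not supply the needed argument. The problem is that the ``sweep'' you describe---pushing $\cS_1$ along $I$-fibers of $Q$ onto $\cS_2$---is not an isotopy of the embedded surface $\cS$ to a new embedded surface: $\cS_2$ is already part of $\cS$, and sweeping $\cS_1$ onto it produces a collision, not a cancellation. In one dimension one can cancel a pair of consecutive transverse intersections of a curve with a leaf, but the two-dimensional analogue requires the region between the two sheets to have very restricted topology; this is exactly why Floyd--Oertel's removal of disks of contact requires a \emph{disk}.

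The paper's proof gets around this by constraining that topology first. After extending the problematic region $P$ to a product $\hat P \cong \hat E \times [0,1]$ with both ends $\hat E_0, \hat E_1$ on $S_\beta^+$, it uses the fact that $(S_\beta^+,\partial S_\beta^+) \hookrightarrow (\FF_\beta,\partial_v\FF_\beta)$ is a homotopy equivalence of pairs: the homotopy from $\hat E_0$ to $\hat E_1$ through $\hat P$ then forces $\hat E_0$ and $\hat E_1$ to be homotopic \emph{in $S_\beta^+$} while being disjoint, which is only possible if $\hat E$ is a disk meeting $\partial S_\beta^+$ in at most two arcs or an essential annulus. Lemmas \ref{no trivial components} and \ref{meridians in W} rule out the small disks and meridian-containing cases, so only rectangles and essential annuli remain. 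Your argument skips this step entirely, and without it one has no control over $\hat E$.

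Even once $\hat E$ is known to be an annulus or rectangle, the isotopy that contradicts minimality is not a sweep of one end onto the other. The paper locates the annulus $A\subset S_\beta^+$ lying \emph{between} $\hat E_0$ and $\hat E_1$, shows via boundary-compressibility, irreducibility, and an innermost argument that $A$ together with one vertical annulus $A_v\subset\partial_v\N$ of $\hat P$ cobounds a solid torus $U$ (or a ball, in the rectangle case) which is a component of $X_\beta$, and then replaces $A$ by $A_v$ pushed slightly into $\N$. The weight drops because $A_v$ is a component of $\partial_v\N(\B)$ and so, by Proposition~\ref{prop:branched finiteness}, contains a subarc of ${\bf t}^{(1)}$ as an $I$-fiber that $A$ must cross but the pushed-in $A_v$ misses. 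Your claim that ``some such arcs lie among the fibers of $Q$'' misplaces the ${\bf t}^{(1)}$ arcs---they live in $\partial_v\N$, not in the interior fibers of $Q$---and your isotopy never passes near them. Finally, your assertion that the isotopy preserves $|\partial\cS|$ because it is ``supported in the interior of $\cM$'' is unjustified: the region $U$ may contain a filling solid torus of $V_\beta$, and one must check (as the paper does) that $A$ and $A_v$ are both disjoint from the cores, so that the isotopy does not change the count. In the rectangle case there is also an additional contradiction not present in your sketch: the ball $U$ is shown to lie in $\cM$ itself, producing a genuine complementary monogon for $\B$, contradicting incompressibility directly rather than via weight.

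In short, your opening reduction (locating a component $Q$ of the $I$-foliated part with both ends on $S_\beta^+$) matches the paper's, but the argument that derives a contradiction from it is missing its key ingredients: the homotopy-equivalence constraint on $\hat E$, the innermost-solid-torus construction producing the actual isotopy, and the careful bookkeeping of both components of the complexity pair.
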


Transverse orientability of $\B$ is clearly equivalent to the orientability of the foliation of $\N = \N(\B)$ by $I$--fibers.  Any orientation on this foliation of $\N$ easily extends to an orientation of the foliation $\I$.  

The proof is an adaptation of an argument of Oertel \cite{oertel1986homology} who proved
that branched surfaces constructed from Thurston-norm minimizing surfaces are transversely orientable.
In our case we appeal to the notion of complexity defined in Section~\ref{S:fill
  complexity} and minimization in an isotopy class, rather than Thurston's complexity,
$\chi_-$, minimized over a homology class.

\begin{proof}
  Suppose $\B$ is not transversely orientable. Let $(S_\beta,\beta)\in \SS_\B$, and fix a transverse orientation on
  $S_\beta$, and hence on $\cS_\beta$. We will show that, for sufficiently long $\beta$,
  this leads to a contradiction.
  
 Since $\cS_\beta$ is fully carried on $\B$ there must be a
  branch of $\B$ where the orientations are inconsistent. So there is
  a region in $\N$ where there are two adjacent sheets of $\boundary_h\FF_\beta$
  whose transverse orientations point into (or out of) the region of $\FF_\beta$
  between them. Extending this region maximally along 
  $\FF_\beta\ssm\inter(X_\beta)$, one obtains a subset of $\FF_\beta$ of the form $P \cong E\times [0,1]$ where $E\times\{0\}$ and $E\times\{1\}$ are identified
 with components $E_0$ and $E_1$ of $W_\beta$ on the {\em same component} of 
 $\partial_h \FF_\beta$, which we denote $S_\beta^+$ without loss of generality.

 Just as at the beginning of the proof of Lemma \ref{sub products}, if $\hat E_0 \neq E_0$
 then we can extend the product region $P$ to $\hat P \cong \hat E \times  [0,1]$ such
 that $  \hat E \times \{0\}$ and $\hat E \times \{1\}$ are identified with $\hat E_0$ and
 $\hat  E_1$. 
In a bit more detail, any disk $D_0$ of $\hat E_0 \ssm E_0$ corresponds to a disk $D_1$ in $\hat E_1 \ssm E_1$ (by incompressibility and boundary incompressibility of $S_\beta^+$ in $\FF_\beta$) and these disks, along with a foliated annulus of $P$, cobound a ball in $\FF_\beta$ (by irreducibility of $\FF_\beta$). Each such ball can be foliated with intervals, extending the foliation of $P$, and $\hat P$ is the subset of $\FF_\beta$ obtained by taking the union of $P$ with all such foliated balls.
 
Since $(S^+_\beta,\boundary S^+_\beta) \hookrightarrow (\FF_\beta,\boundary_v\FF_\beta)$ is a
homotopy equivalence of pairs, 
the homotopy of $\hat E_0$ to $\hat E_1$ along $\hat P$ implies that these two
regions are homotopic in $S_\beta^+$ through maps preserving $\partial S^+_\beta$.
But on the other
hand, the regions $\hat E_0$ and $\hat E_1$ are disjoint. This
is only possible if $\hat E$ is a disk meeting $\partial S^+_\beta$ in at most two arcs
or an annulus not meeting $\partial S^+_\beta$. 

Lemma~\ref{no trivial components} rules out disks meeting $\partial S_\beta$ in at most one arc
for sufficiently long fillings.  Since orientability of $\B$ is independent of filling
slope $\beta$, we may assume that $\hat E$ does not have this form.  Therefore, $\hat E$ is
either a disk meeting $\partial S_\beta$ in exactly two arcs (a ``rectangle'') or is an essential
annulus. In either case, Lemma \ref{meridians in W} tells us that for sufficiently long
fillings, $\hat E_0$ and $\hat E_1$ contain no meridians. 

Let us first consider the annular case.
Thus, we have that $\hat E_0$ and $\hat E_1$ are two annuli  bounded by smooth curves and
parallel in $S^+_\beta$, and $\hat P$ is a solid torus. 
The vertical boundary  $\boundary_v\hat P$ consists of two annuli identified with $\boundary\hat E \times
[0,1]$. Each of them is incompressible with boundary on $S^+_\beta$ and must therefore be boundary
compressible. By irreducibility of $\FF_\beta$, they must each cobound a solid torus with
an annulus in $S^+_\beta$. Choosing $U$ the innermost of these two solid tori, we see that
$U$ meets $S^+_\beta$ in an annulus $A$ between $\hat E_0$ and $\hat E_1$, and meets $\hat
P$ in one of the annuli of $\boundary_v\hat P$, which we denote $A_v$. See Figure
\ref{monogon2}. The meridian disk $m_\beta$ of $U$, given by the boundary compression of $A_v$,
meets $A_v$ in a single arc.

Though $U$ may contain other foliated solid tori parallel to $\hat P$, we can take $\hat P$
innermost so that $U$ is a component of $X_\beta$.
Note that $U$ cannot be a solid torus in $\cM$, since that would make $m_\beta$ a monogon for
$\B$. Thus it must contain a solid torus $V$ of $V_\beta$.

 \begin{figure}[htbp]
\begin{center}
\includegraphics[width = .5 \textwidth]{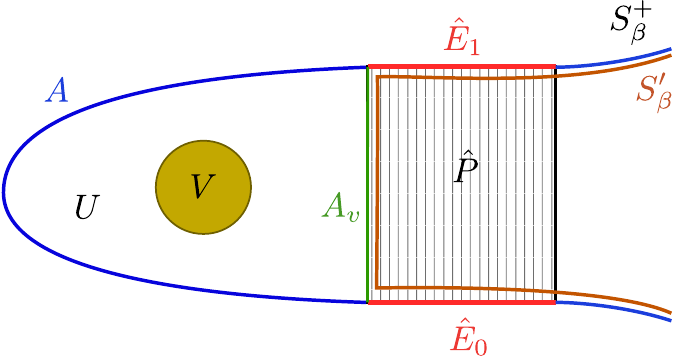}
\caption{This figure crossed with $S^1$ illustrates the annulus case in Lemma
  \ref{transverse orientation}. Crossed with $[0,1]$ and without the yellow disk, it
  illustrates the rectangle case.}
\label{monogon2}
\end{center}
\end{figure}

   Now replace $A$ with $A_v$ in $S^+_\beta$ to produce:
   \[ S_\beta' = (S_\beta^+ \setminus A )\cup A_v. \]
   Push $S_\beta'$ slightly further into $\N(\B)$ so that it misses $A_v$.

  This surface is isotopic to $S_\beta$ (through $U$), because the meridian $m_\beta$ meets $A_v$ and $A$ in a
  single arc. The isotopy does not change the intersection with the cores of
  the solid tori, but reduces the intersection with the 1-skeleton of
  ${\bf t}$. This is because, by Proposition \ref{prop:branched finiteness}, the annulus
  $A_v$, which is a component of $\boundary_v\N(\B)$, contains arcs of ${\bf t}^{(1)}$
  which intersected $S^+_\beta$ but miss $S'_\beta$. 
   This contradicts the complexity-minimizing choice of
   $S_\beta$, also made possible by Proposition \ref{prop:branched finiteness}.

It remains to consider the case where each $\hat E_i$ is an essential rectangle,
i.e. a nontrivial disk in $S_\beta^+$ which meets $\partial S_\beta^+$ along two arcs.
Figure \ref{monogon2} again describes the situation, but one should interpret the diagram
cross $[0,1]$ instead of $S^1$. Thus we see $\hat E_1$ and $\hat E_2$ as rectangles in
$S^+_\beta$, and $\hat P$ as a cube, with its front and back faces (the diagram rectangle
crossed with $\{0\}$ and $\{1\}$) lying in $\boundary M_\beta$. 
The arcs labeled $A_v$ and $A$ represent rectangles lying in $\boundary_v \N$ and
$S^+_\beta$ respectively, and their union $A_v\union A$ is a properly embedded
annulus, whose boundary circles lie in the toroidal boundary $\boundary M_\beta$. 

Suppose it is not null-homotopic. Then, because $\cM$ is hyperbolic 
$A_v\union A$  must be a boundary-parallel annulus. This means that each arc connecting its
boundaries can be deformed rel endpoints to $\boundary M_\beta$. Applying this to such an
arc lying just in $A$, we obtain a boundary-compression of $S^+_\beta$, which is a
contradiction. 

Thus $A_v\union A$ is null-homotopic so its boundary circles bound disks in
$\boundary M_\beta$ because $\boundary M_\beta$ is incompressible.
By irreducibility then the region between $A_v\union A$ and these two disks is a ball, 
labeled $U$ in the figure.

Again by choosing $\hat P$ innermost we can assume that $U$ is a component of $X_\beta$. 
There is no component of $V_\beta$ in $U$ this time, since it is a ball, so it is in fact
a component of $X$. This means that
a disk $m_\beta$ constructed as a compression of the annulus $A_v\union A$ is an actual
monogon for $\B$ in $\cM$, and this is a contradiction to the incompressibility of $\B$.

This contradiction implies that $\B$ is orientable. 
\end{proof}

\subsection{Product structures are tame}

We can now assemble the proof of the key fact that, for sufficiently long fillings, 
the  foliation $\I|(\FF_\beta \ssm X_\beta)$ comes from a product structure on $\FF_\beta$.

\begin{proposition}\label{Align foliations}
For sufficiently long fillings, the foliation $\I|(\FF_\beta \ssm X_\beta)$ can be extended to a foliation of all of $\FF_\beta$.  Consequently, $\I|(\FF_\beta \ssm X_\beta)$ agrees with a product foliation $\FF_\beta \cong S_\beta \times [0,1]$ (up to isotopy), and each component of $X_\beta$ is a subproduct of the associated product structure on $\FF_\beta$.
\end{proposition}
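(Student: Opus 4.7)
The plan is to chain together the three preceding lemmas---transverse orientability of $\B$ (Lemma \ref{transverse orientation}), absence of trivial components of $W_\beta$ (Lemma \ref{no trivial components}), and the unknotting statement for embedded subproducts (Lemma \ref{sub products})---which together leave essentially nothing further to prove. The content of the proposition is to apply Lemma \ref{sub products} to the embedding of the foliated complement of $X_\beta$ into $\FF_\beta$, once its hypotheses have been arranged by the first two lemmas.

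First, I would invoke Lemma \ref{transverse orientation} to conclude that the $I$-foliation $\I$ on $\FF_\beta\setminus X_\beta$ is orientable. Orientability guarantees that every connected component $P$ is a genuine product $P\cong E_P\times[0,1]$ whose two horizontal faces $E_P^-=E_P\times\{0\}$ and $E_P^+=E_P\times\{1\}$ lie in $S_\beta^-$ and $S_\beta^+$ respectively; these faces are precisely the components of $W_\beta$ on the two sheets of $\partial_h\FF_\beta$. Next, for sufficiently long fillings, Lemma \ref{no trivial components} says that no component of $W_\beta$ is trivial, so each $E_P^\pm$ is a non-trivial subsurface with corners. Setting $W=\bigsqcup_P E_P^-\subset S_\beta^-$ and $Y=W\times[0,1]$, the inclusion of the foliated part $\bigsqcup_P P\hookrightarrow\FF_\beta$ gives an embedding of quadruples
\[ F\colon(Y,W\times\{0\},W\times\{1\},\partial_b W\times[0,1])\hookrightarrow(\FF_\beta,S_\beta^-,S_\beta^+,\partial_v\FF_\beta\cap\partial M_\beta) \]
whose restriction to $W\times\{0\}$ is the literal inclusion, after identifying $\FF_\beta\cong S_\beta\times[0,1]$ and $S_\beta^-\cong S_\beta$ along $S_\beta\times\{0\}$. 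Lemma \ref{sub products}, whose hypothesis explicitly allows $W$ to be disconnected provided each component is non-trivial, then delivers an isotopy of $F$ to the standard inclusion of $Y$ as a subproduct of $S_\beta\times[0,1]$, which promotes to an ambient isotopy of $\FF_\beta$ via isotopy extension.

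After this isotopy, each foliated component sits in $\FF_\beta$ as the standard subproduct $E_P^-\times[0,1]$, its $I$-foliation coincides with the restriction of the product foliation, and the complement $X_\beta=(S_\beta\setminus W)\times[0,1]$ splits into subproducts $R\times[0,1]$, one for each component $R$ of $S_\beta\setminus W$. Thus the product foliation on $\FF_\beta$ extends $\I|(\FF_\beta\setminus X_\beta)$ to all of $\FF_\beta$, it agrees with $\I$ up to the constructed isotopy, and each component of $X_\beta$ is a subproduct of the resulting product structure, as required. The main conceptual obstacle is that the foliated part may have many components and one needs a single isotopy aligning all of them simultaneously with the product structure, but this is already absorbed into the statement of Lemma \ref{sub products}, so no extra assembly step is needed.
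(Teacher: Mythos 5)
Your proof is correct and follows essentially the same route as the paper's: Lemma~\ref{transverse orientation} forces the two horizontal faces of each foliated component onto opposite sides of $\partial\FF_\beta$, Lemma~\ref{no trivial components} rules out trivial faces for sufficiently long fillings, and Lemma~\ref{sub products} (applied to the disjoint union of all foliated components at once) then aligns the foliation with a product structure. Your write-up simply makes explicit the assembly of components into a single subsurface $W$ and the resulting subproduct decomposition of $X_\beta$, both of which the paper leaves implicit.
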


\begin{proof}
On each component $P$ of $\FF_\beta \ssm X_\beta$, the foliation $\I|(\FF_\beta \ssm
X_\beta)$ by intervals determines a product structure $E \times [0,1]$.  Lemma
\ref{transverse orientation} implies that $\I$ is orientable, and hence any leaf must
intersect both components of $\partial \FF_\beta$.  In particular, $E\times\{0\}$ and $E\times\{1\}$ are components of $W_i$ on {\em opposite} sides of $\partial \FF_\beta$.
Furthermore, Lemma \ref{no trivial components} implies that for all sufficiently long fillings $E\times\{0\}$ is not trivial. 
Therefore, Lemma \ref{sub products} implies that the product structure of $\FF_\beta$ can be isotoped so that it matches the product structure determined by the foliation $\I|(\FF_\beta \ssm X_\beta)$.  
\end{proof}

\subsection{Fixed-fiber reduction and the completion of the proof.} 
\label{S:fiber theorem proofs}
We are now ready for the proofs of Theorems \ref{th:structure} and \ref{th:main_wp}.

\begin{proof}[Proof of Theorem~\ref{th:structure}] As in our setup so far (Section
  \ref{S:branched surface decomp}) we restrict attention to a single branched surface $\B$
  and the associated fillings and fibers $\SS_\B$.
  
For $(S,\beta)\in \SS_\B$ sufficiently long we can apply Proposition~\ref{Align
  foliations}, which tells us that the foliation $\I$ on $\N\cup V^B_\beta$ extends to
$X_\beta$, giving a product foliation on $\FF_\beta$ for which $X_\beta$ is a subproduct.
The product structures on $\FF_\beta$ and $\T_\beta$ define a specific mapping torus
structure on $M_\beta$ and hence suspension flow $(\psi_s)$ on $M_\beta$ (well-defined up
to reparameterization).  By construction of $\I$ on $V_\beta$, we see that $V_\beta$ is
invariant by $(\psi_s)$ and the $I$--fibers of $\N$ are arcs of flow lines. 

In particular the cores of $V^B_\beta$ are already vertical with respect to this
structure. Left to handle are the solid tori $V^F_\beta$, which may still be knotted in
$\FF_\beta$. To address this we need to establish additional uniformity which will allow
us to invoke a theorem of Otal about unknottedness of short geodesics in Kleinian surface groups.

Let $\check M_\beta = \cM \cup V^F_\beta$ be the manifold obtained by filling $\cM$ along only
the floating tori $V^F_\beta$, and note that $\check M_\beta = \N \cup X_\beta = M_\beta
\ssm \inter(V^B_\beta)$.  Since
$(\psi_s)$ preserves $V^B_\beta$, it restricts to a flow on $\check M_\beta$.

Now we observe that for {\em any} surface $\cS$
fully carried by $\B$ -- for example we can start with $(S,\beta_0)$ in $M_{\beta_0}$ and
remove intersections with $V^B_{\beta_0}$ -- we can embed it in $\N$ in the standard way
and view it as a surface in $\check M_\beta$ for a different value of $\beta$. We
henceforth fix such a $\cS$ and allow $\beta$ to vary.
Then $\cS$ is transverse to the flow $(\psi_s)$ and in fact meets every flow line in
forward and backward time;
otherwise a flow half-line $\psi_{[s,\infty)}$ or $\psi_{(-\infty,s]}$ would miss $\N$ meaning that it was trapped in some component of
$X_\beta$, which is impossible since $X_\beta$ is a product and the flow lines intersect
it in compact arcs of the product foliation.  Hence, there is a well-defined first return
map $\phi \colon \cS \to \cS$ of $(\psi_s)$ and $\check M_\beta$ is the mapping torus on
$\phi$.  

As in our previous constructions, we 
thicken $\cS$ to a product $\T_{\cS}$ and push it out along $I$--fibers of $\N$, so that
$\partial_h\T_{\cS}$ contains $\partial_h \N$.  The closure of the complement $\FF_{\beta,\cS}$
is also a product, since $\cS$ is a fiber in $\check M_\beta$.  Since the product
structures on each of $\T_{\cS}$ and $\FF_{\beta,\cS}$ are compatible with $(\psi_s)$, it
follows that $X_\beta$ is a subproduct of $\FF_{\beta,\cS}$. Note that this product
structure is the same up to isotopy as the product structure inherited from $\FF_\beta$,
since both are determined by the decomposition 
$\boundary X_\beta = \boundary_h X_\beta \union \boundary_v X_\beta$. 

Consider the infinite cyclic cover $N_{\beta,\cS}$ of $\check M_\beta$ associated to $\cS$.
Otal's theorem \cite{otal1995nouage} implies that there is an $\ell >0$ depending only on
$|\chi(\cS)|$ so that if the cores of the solid tori $V_\beta^F$ have hyperbolic length
less than $\ell$ then their lifts to $N_{\beta,\cS} \cong \cS \times \R$ are level. 
(We note that although Otal only explicitly treats the case where $\cS$ is closed, the general
case is similar. Alternatively, the version needed here is explicitly stated by Bowditch \cite[Theorem 2.2.1]{bowditch2011ending} 
and also follows directly from a more general result of Souto \cite{souto2008short}.)
Henceforth, we consider only $\beta$ sufficiently long so that the cores of $V_\beta^F$ have
length less than $\ell$, which is again possible by Thurston's Dehn surgery theorem.

The product structure $N_{\beta,\cS} \cong \cS \times \R$ is obtained by gluing the $\BZ$--indexed lifts of $\T_{\cS}$ and $\FF_{\beta,\cS}$ together.  Observe that all $V_\beta^F \subset X_\beta \subset \FF_{\beta,\cS}$, and that $X_\beta$ is a subproduct of $\FF_{\beta,\cS}$ by Proposition~\ref{Align foliations}.
By Lemma \ref{different straightenings}, working in one of the lifts of $\FF_{\beta,\cS}$ to $N_{\beta,\cS}$ (which projects homeomorphically to $\check M_\beta$), the cores of $V_\beta^F$ are level in the product structure of $\FF_{\beta,\cS}$, and further more the isotopy is supported in $X_\beta$.  That is, after an isotopy, we may assume that the cores of $V_\beta^F$ are level with respect to $\cS$.

But the product structures on $X_\beta$ are isotopic, so 
we see that  the cores of $V^F_\beta$ are level with respect to
$\cS_\beta$, and hence $S_\beta$ as well.  Since the cores of $V^B_\beta$ are already transverse, it follows that
the cores of all filling solid tori $V_\beta$ are standard, and since $M$ is obtained from
$M_\beta$ by drilling out these cores, we are done. 
\end{proof}

We conclude this section by proving Theorem \ref{th:main_wp}.
\begin{proof}[Proof of Theorem \ref{th:main_wp}] 
Fix $L >0$. If $\phi \in \Phi_{wp}(L)$, then $\vol(M_\phi) \le \frac{3}{2}\sqrt{2\pi}L$ by Theorem \ref{th:vol}. By the J\o rgensen--Thurston theorem \cite[Theorem 5.12.1]{Thu78}, there is a finite collection $\M_0$ of hyperbolic $3$-manifolds of volume at most $V =  \frac{3}{2}\sqrt{2\pi}L$ so that every hyperbolic $3$-manifold of volume at most $V$ is obtained from some manifold in $\M_0$ by hyperbolic Dehn filling. 

Since the set $\M_0$ is finite, Theorem \ref{th:structure} gives that
after excluding at most finitely many slopes per boundary component
per manifold in $\M_0$, all other fillings of the manifolds in $\M_0$
have cores that are level or transverse. 

For each $\cM\in \M_0$ choose a boundary component and consider the
finitely many manifolds obtained by filling along the excluded slopes
for that boundary. Let $\M_1$ be the collection of all such fillings
over all members of $\M_0$. 
Since $\M_1$ is also finite, we may repeat the process of applying
Theorem \ref{th:structure}. Proceeding inductively, we terminate when
no more fillings are needed to obtain elements of
$\Phi_{wp}(L)$. Along the way we have accounted 
for all of the members of $\Phi_{wp}(L)$, showing that they come from our
combined union of finite families by drilling along level or transverse curves.
\end{proof}

We end this section by describing an example of $1$--manifolds in a sequence of Dehn fillings on a compact manifold with hyperbolic interior, such that (a) each manifold fibers in infinitely many ways and (b) for all sufficiently long fillings, the $1$--manifold in each filling identified by the theorem is level with respect to one fiber and transverse with respect to another.
\begin{example}[Level/transverse is relative] \label{ex:2structures}
Let $a$ and $b$ be homologous nonseparating curves that fill a closed surface $S$ of genus $g \geq 2$, and let $\phi_n = T_a^n \circ T_b^{-n}$ where $T_c$ is the Dehn twist about $c$.
These mapping classes are pseudo-Anosov for $n\ge 1$ by Thurston's construction \cite{Th}, and so the mapping tori $M_n = M_{\phi_n}$ are hyperbolic $3$--manifolds by Thurston's hyperbolization theorem \cite{thurston1986hyperbolic,otal2001hyperbolization}.

We can view $M_n$ as obtained from a sequence of Dehn surgeries on $C = a \times \{\frac23\} \cup b \times \{\frac13\}$ in the mapping torus of the identity $M_0 = S \times [0,1]/(x,1) \sim (x,0) \cong S \times \BS^1$.
By the J\o rgensen--Thurston theorem \cite[Theorem 5.12.1]{Thu78}, the sequence of hyperbolic manifolds $M_{n} = M_{f_n}$ limits to the cusped hyperbolic manifold $\cM =M_n \ssm (a^* \cup b^*)$ obtained by removing the geodesic representatives of $a$ and $b$ from $M_n$ for $n\gg1$.
This is the manifold obtained by drilling along (disjoint copies of) the level curves $a$ and $b$ of the fiber $S$ in any manifold of the sequence, and is homeomorphic to $M_0 \setminus C$.

Now chose a different fiber $S_n$ of the manifold $M_n$ over the same fibered face as $S$ into which $a$ and $b$ {\bf cannot} be homotoped (see \S\ref{S:Thurston-Fried}).
To see that it is possible to find such a fiber $S_n$, first observe that the Poincar\'e dual of $S$ lies in the subspace of $H^1(M)$ which vanishes on the homology class of $a$ (and $b$, since $a$ and $b$ are homologous).  
The linear subspace of $H^1(M)$ consisting of classes that vanish on this homology class has codimension $1$ since $\phi_n$ acts trivially on $H_1(S)$ (that is, $\phi_n$ is in the Torelli group), and hence any element of $H^1(S)$, in particular one that is nonzero on $a$, extends to an element of $H^1(M)$.

Since $a^*$ and $b^*$ become arbitrarily short as $n$ tends to infinity, these curves are necessarily a part of $\C$ from Theorem \ref{th:structure}; in fact, their union is equal to $\C$.  According to that theorem, $a$ and $b$ must be transverse in $S_n$, for $n$ sufficiently large. 
In fact, we see that $\cM$ is a fibered manifold with fibers $S_n$ punctured along $a \cup b$.

Therefore, for all $n$ sufficiently large, $a^* \cup b^* = \C$ in $M_n$ is level with respect to the fiber $S$ but transverse with respect to the fiber $S_n$.
\end{example}

\section{Bounding Weil-Petersson translation length} 
\label{sec:estimates}

In this section we prove the following theorem from the introduction.\\

\noindent {\bf Theorem~\ref{T : twisted Linch bound 2}.}
{\em There exists $c > 0$ so that if $\phi \colon S \to S$ is a pseudo-Anosov on a closed surface, $\alpha \subset S$ is a simple closed curve with $\tau_\alpha = \tau_\alpha(\phi) \geq 9$, and $k \in \mathbb Z$, then
\[ \Vert T_\alpha^k \circ \phi \Vert_{wp} \leq \Vert \phi \Vert_T  \sqrt{c |\chi(S)|}.\]}

The bound on translation length in the theorem is obtained by producing an explicit $(T_\alpha^k \circ \phi)$--invariant path $\mathbb R \to \T(S)$ and bounding the WP-length of a fundamental domain for the action of $\langle T_\alpha^k\circ \phi \rangle$.
The path is constructed as a {\em leaf-wise conformal structure} on the mapping torus of $T_\alpha^k \circ \phi$.  

The remainder of this section is concerned with constructing the required structure and proving the  bound on the Weil-Petersson translation length.  We begin in Section \ref{S:leaf-wise conformal structure} where we make precise what we mean by a suspension and leaf-wise conformal structure on a fibered $3$--manifold.  This provides a more convenient framework for carrying out the construction.  In Section \ref{S:singular solv structure} we describe the leaf-wise conformal structure coming from the singular-solv structure which is essentially the starting point for our construction.  Since the singular-solv structure is constructed from the axis for a pseudo-Anosov with respect to the Teichm\"uller metric, this explains the appearance of $\Vert \phi \Vert_T$ in the bound we obtain.

Next, in Section \ref{S:Dehn twist and fill} we describe how we will perform Dehn surgery on the manifold $M$, viewed as a suspension, so that the Dehn filled manifold is still a suspension, and the monodromy has been composed with a power of a Dehn twist.  This is followed by Section \ref{S:good solid tori}, which contains two technical lemmas: one describes a particular solid torus in $M$ that is situated nicely with respect to the singular-solv structure; the second produces the explicit solid torus and leaf-wise conformal structure that we will use in our Dehn filling.

We assemble the ingredients in Section \ref{S:bound proof} and prove a more precise version of Theorem~\ref{T : twisted Linch bound 2}.  In Section \ref{S:examples} we explain how to use this to show that $\Psi_{wp}(L)$ contains infinitely many conjugacy classes of pseudo-Anosov mapping classes on every closed, orientable surface of genus at least $2$, and finally in Section \ref{S:generalities} we explain how to apply the theorem to produce examples in a much more general setting.

For the remainder of this section, we will take $\Sigma$ to be an arbitrary surface.  The two cases of interest to us are when $\Sigma$ is a closed surfaces (in which case we often denote it by $S$) and when $\Sigma$ is an annulus (and we denote it $\BA$).

\subsection{Leaf-wise conformal structures} 
\label{S:leaf-wise conformal structure}  
Suppose $\Sigma$ is a surface and $\pi \colon E \to B$ a $\Sigma$--bundle over a connected $1$--manifold $B$ which we view as either an interval in $\R$ or a circle $L\BS^1 = \R / L \BZ$ of length $L > 0$ (in particular, $\R$ locally acts on $B$ by translation).  We consider local flows $\phi_t$ on $E$ such that for all $x \in E$ and $t \in \R$, $\pi(\phi_t(x)) = \pi(x) + t$, as long as $\phi_t(x)$ is defined.  If $E = \Sigma \times J$ for an interval $J \subseteq \R$ and $\pi$ the projection onto the second factor, then after changing the product structure we may assume that $\phi_t(x,s) = (x,s+t)$.
For $s \in B$, we write $\Sigma_s = \pi^{-1}(s)$.  Given $s,t \in B$, we can restrict $\phi_{t-s}$ to a homeomorphism
\[ \phi_{t-s} \colon \Sigma_s \to \Sigma_t.\]

For fibrations $\pi \colon M \to L\BS^1$ and $s,t \in L\BS^1$, $t-s$ is only defined modulo $L \BZ$.  In this situation, we pass to the infinite cyclic cover of $M$ corresponding to the kernel of the homomorphism $\pi_* \colon \pi_1M \to \pi_1L\BS^1 \cong \BZ$, and lift the flow and fibration over $L\BS^1$ to a fibration over $\R$, to well-define $\phi_{t-s}$.  Then we have $\Sigma_0 = \Sigma_L$ in $M$ and $\phi = \phi_L \colon \Sigma_0 \to \Sigma_0$ is the monodromy of the bundle.  We refer to the bundle and flow $(\pi \colon M \to L\BS^1,(\phi_t))$ as a {\em suspension} (since $\phi_t$ is naturally the suspension flow of the monodromy $\phi$).

A {\em leaf-wise conformal structure} $\zeta$ on any $\Sigma$--bundle $E$ is a conformal structure on each surface $\Sigma_s$, making it into a Riemann surface $\Sigma_s^\zeta$, such that $\phi_{t-s} \colon \Sigma_s \to \Sigma_t$ is a quasi-conformal homeomorphism whenever it is defined.
Let $\nu_{t,s} \in \belt_1(\Sigma_s^\zeta) \subset \belt(\Sigma_s^\zeta)$ denote the Beltrami differential of $\phi_{t-s}$.  

If the path of Beltrami differentials $t \mapsto \nu_{t,s} \in \belt_1(\Sigma_s^\zeta)$ is piecewise smooth, we write
\[ \mu_s^\zeta = \frac{d}{dt}\Big|_{t=s}\nu_{t,s}.\]
We call this the {\em tangent field} of the family.  To justify this name, observe that the map $t \mapsto [\nu_{t,s}] \in \CT(\Sigma_s^\zeta) = \CT(\Sigma)$ defines a path in the Teichm\"uller space and that $\mu_t^\zeta \in \belt(\Sigma_t^\zeta)$ represents the tangent vector at time $t$ to this path, for each $t \in J$.

\begin{proposition}\label{P:length bound}
Let $(\pi:M\to L \, \BS^1,(\phi_t))$ be a suspension with fibers $S_t=\pi^{-1}(t)$, let $\zeta$ be a leaf-wise conformal structure, and let $\mu_t^\zeta\in \belt(S_t^\zeta)$ be its tangent field. The mapping class $\phi = \phi_L \in \Mod(S_0)$ has Weil-Petersson translation length $\Vert \phi \Vert_{wp}$ bounded by
$$\Vert \phi \Vert_{wp}\le\int_0^L \left(\sqrt{\int_{S_t}\vert\mu^\zeta_t\sigma^\zeta_t\vert^2}\right)dt,$$
where $\sigma^\zeta_t$ is the hyperbolic metric uniformizing $S_t^\zeta$.
\end{proposition}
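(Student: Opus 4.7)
The plan is to construct, from $\zeta$, a smooth path $t \mapsto X_t$ in $\T(\Sigma)$ whose endpoints lie in the same $\phi$-orbit, and then bound its Weil--Petersson length by the stated integral using Lemma~\ref{lem wp}.

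First, pass to the infinite cyclic cover $\tilde\pi \colon \tilde M \to \R$ of $\pi$. The lifted flow provides a trivialization $\tilde M \cong \Sigma \times \R$ under which $(\phi_t)$ acts by translation on the $\R$-factor, and each fiber $\tilde S_t \cong \Sigma$ via projection. The lifted leaf-wise conformal structure makes each $\tilde S_t$ into a Riemann surface, which via the trivialization determines a marked point $X_t \in \T(\Sigma)$. The generator of the deck group identifies $\tilde S_t$ with $\tilde S_{t+L}$ conformally, and in the trivialization acts on $\Sigma$ as the monodromy $\phi$; consequently $X_L = \phi \cdot X_0$ in $\T(\Sigma)$.

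Because $\phi$ acts on $(\T(\Sigma),d_{wp})$ by isometries,
\[
\Vert \phi\Vert_{wp} \;\le\; d_{wp}(X_0, X_L) \;\le\; \int_0^L \Vert \dot X_t\Vert_{wp}\, dt.
\]
For fixed $t$, the family $s\mapsto \nu_{s,t}$ is a piecewise smooth path of Beltrami differentials on $S_t^\zeta$ with $\nu_{t,t} = 0$, and its derivative at $s=t$ is $\mu_t^\zeta$. Under the trivialization above, this derivative represents the Teichm\"uller-space tangent vector $\dot X_t$ to the path $t \mapsto X_t$, so $\Vert \dot X_t\Vert_{wp} = \Vert \mu_t^\zeta\Vert_{wp}$. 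Applying Lemma~\ref{lem wp} on the Riemann surface $S_t^\zeta$ with hyperbolic uniformization $\sigma_t^\zeta$ then gives
\[
\Vert \mu_t^\zeta\Vert_{wp} \;\le\; \sqrt{\int_{S_t}\vert \mu_t^\zeta \sigma_t^\zeta\vert^2},
\]
and integrating over $[0,L]$ yields the claimed bound.

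The substantive point to verify carefully is that the path $t\mapsto X_t$ really does close up so that $(X_0,X_L)$ are $\phi$-related under the marking/trivialization conventions, and that $\mu_t^\zeta$ represents the Teichm\"uller-space tangent vector $\dot X_t$ rather than some other Beltrami deformation. Both come down to unpacking the definitions in Section~\ref{S:leaf-wise conformal structure}, together with the description of $\phi$ as the holonomy of the suspension flow around $L\BS^1$; once those identifications are made, everything reduces to the $L^2$ bound for the Weil--Petersson conorm already established in Lemma~\ref{lem wp}.
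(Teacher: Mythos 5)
Your proposal is correct and follows essentially the same route as the paper: both bound $\Vert\phi\Vert_{wp}$ by the Weil--Petersson length of the path $t\mapsto[\phi_t\colon S_0\to S_t^\zeta]$ and then apply Lemma~\ref{lem wp} pointwise to the tangent field. The paper simply states this more tersely, while you unpack the bookkeeping (infinite cyclic cover, trivialization, the identification $X_L=\phi\cdot X_0$) that the paper leaves implicit.
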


\begin{proof}[Proof of Proposition \ref{P:length bound}]
The translation length $\Vert\phi\Vert_{wp}$ is bounded from above by the length of the path 
$$[0,L] \to\CT(S_0),\ t\mapsto [\phi_t \colon S_0\to S_t^\zeta].$$
In formulas this means that 
$$\Vert \phi \Vert_{wp}\le\int_0^L\Vert\mu_t^\zeta\Vert_{wp}.$$
The desired bound follows now from Lemma \ref{lem wp}.
\end{proof}

\subsection{Singular solv structure} 
\label{S:singular solv structure}

Suppose $\phi \colon S \to S$ is a pseudo-Anosov {\em homeomorphism} and $M = M_\phi$ is the mapping torus.  The {\em singular-solv metric} on $M$ is a piecewise Riemannian metric that induces a Euclidean cone metric on the fibers of a fibration $\pi \colon M \to L \BS^1$, where $L = \log(\lambda(\phi)) = \Vert \phi \Vert_T$; see e.g.~\cite{CannonThurston,mcmullen2000polynomial}.  There is a natural unit speed flow $(\phi_t)$ making $(\pi \colon M \to L\BS^1,(\phi_t))$ into a suspension so that:
\begin{enumerate}
\item The Euclidean cone metrics on the fibers defines a leafwise conformal structure $\zeta$,
\item the maps $\phi_{t-s} \colon S_s^\zeta \to S_t^\zeta$ are Teichm\"uller mappings for all $s,t$, with initial and terminal quadratic differentials $\varphi_s$ and $\varphi_t$, respectively,
\item for all $s,t$, the Beltrami differential of $\phi_{t-s}$ is given by $\nu_{t-s} = \tanh(t-s) \frac{\bar \varphi_s}{|\varphi_s|}$,
\item the tangent field is given by $\mu_t^\zeta = \frac{\bar \varphi_t}{|\varphi_t|}$, and
\item the vertical and horizontal foliations of $\varphi_s$ are the stable and unstable foliations for $\phi$, for all $s$.
\end{enumerate}

Note that the tangent field $(\mu^\zeta_t)$ to the singular-solv leaf-wise conformal structure $\zeta$ has $|\mu^\zeta_t| = 1$, for all $t$.  Therefore, applying Proposition~\ref{P:length bound}, we obtain
\[ \Vert \phi \Vert_{wp}\le\int_0^L \left(\sqrt{\int_{S_t}\vert\mu^\zeta_t\sigma^\zeta_t\vert^2}\right)dt \leq \int_0^L \sqrt{\int_{S_t}( \sigma^\zeta_t)^2} = L \sqrt{\Area(S)} = \Vert \phi \Vert_T \sqrt{\Area(S)},\]
as expected.  To prove Theorem~\ref{T : twisted Linch bound 2} we will perform an appropriate Dehn surgery on $M$ by drilling out the curve $\alpha$ on a fiber $S$, and replacing it with an appropriate solid torus and leaf-wise conformal structure.  The goal of the next section is to describe the setup for such a surgery construction.

\subsection{Dehn twists and Dehn filling} 
\label{S:Dehn twist and fill}

For any interval $J \subset \R$, the product
$$\BT_J =\BA\times J,$$
is a (not necessarily compact) solid torus.  We denote the local flow in this special case by $\phi^\BT_t(x,s)=(x,t+s)$ (defined for $s,t+s \in J$, as usual), and denote the projection onto the second factor by $\pi_\BT:\BT_J \to J$.  In particular, we have $\pi_\BT \phi^\BT_t (x,s) = \pi_\BT(x,s) + t = s+ t$, where defined.  As usual, we define $\BA_s = \BA \times \{s \}$ for all $s \in J$.

If we have a suspension $(\pi:M\to L\BS^1,(\phi_t))$, an embedding of a compact solid torus $\iota:\BT_J\hookrightarrow M$, and an orientation preserving local isometry $\bar \iota \colon J \to L \BS^1$, then we say that the suspension and solid torus are {\em compatible (via $\iota$ and $\bar \iota$)} if the following diagram commutes, whenever all maps are defined
\[ \xymatrix{ 
\BT_J \ar[r]^{\phi^\BT_s} \ar[d]_\iota & \BT_J \ar[r]^{\pi^\BT} \ar[d]_\iota & J \ar[d]^{\bar \iota}\\
M \ar[r]^{\phi_s} & M \ar[r]^\pi & L\BS^1.\\
} \]

Given a compatible suspension $(\pi \colon M \to L\BS^1,(\phi_t))$ and solid torus $\iota \colon \BT_J \to M$ with $J = [a,b]$, let $M' = M - \iota(\mathrm{int}(\BT_J))$.  For each $k$, the $\frac1k$--Dehn surgery on $\iota(\BT_J)$ in $M$ is obtained gluing $\BT_J$ to $M'$ via a homeomorphism $g_k \colon \partial \BT_J \to \partial M' = \iota(\partial \BT_J)$,
given by 
\[ g_k(x) = \left\{ \begin{array}{ll} \iota(x) & \mbox{for } x \in \BA \times \{a\} \cup \partial \BA \times J\\
\iota \circ t_k(x) & \mbox{for } x \in \BA \times \{b\}. \end{array} \right. \]
where $t_k \colon \BA \to \BA$ is a homeomorphism representing the $k^{th}$ power of a Dehn twist in the core curve of $\BA$.
The resulting manifold $M_k = M' \cup_{g_k} \BT_J$ admits a flow $(\phi_s)$ which locally agrees with the flow of the same name on $M' \subset M$ and on $\BT_J$ locally agrees with $(\phi_s^{\BT})$, since the original solid torus was compatible.
The monodromy is conjugate (by a power of $\phi$) to the composition $T_\alpha^k \circ \phi$, where $\alpha$ is a curve in $S_0$ in the isotopy class of the core of $\iota(\BT_J)$; see e.g.~\cite{Stallings-Fib,Harer-Fib,Long-Morton-Fib}.

\subsection{Good solid tori} 
\label{S:good solid tori}

Throughout this section, we let $M = M_\phi$ for a pseudo-Anosov $\phi$.
We will perform Dehn surgeries as described in the previous section in the presence of leaf-wise conformal structures on both $M$ and the filling solid tori $\BT_J$.  The leaf-wise conformal structure on $M$ will come from the singular-solv metric, and the solid torus which we will remove is described in Lemma~\ref{L:good solid torus} below.  The leaf-wise conformal structure on $\BT_J$ will be constructed by hand and will be such that the gluing maps restricted to the top and bottom $\BA \times \partial J$ are conformal.  This is described in Lemma~\ref{L:sharp leafwise annulus}.  As the proofs of these are somewhat lengthy and technical, we defer their proofs to Section~\ref{S:constructions and estimates}.

Recall that for any simple closed curve $\alpha \subset S$ the {\em twisting coefficient of $\alpha$ for $\phi$}, denoted $\tau_\alpha(\phi)$, is defined to be the distance in the subsurface projection to the annulus with core curve $\alpha$ of the stable and unstable laminations $\mathcal L_+,\mathcal L_-$ for $\phi$:
\[ \tau_\alpha(\phi) =  d_\alpha(\mathcal L_+,\mathcal L_-), \]
see \cite{MM2} for details.  As in Theorem~\ref{T : twisted Linch bound 2} we now assume $\tau_\alpha = \tau_\alpha(\phi) \geq 9$.

The importance of this condition rests on a result of Rafi \cite{rafi2005characterization} which provides a definite modulus Euclidean annulus (depending on $\tau_\alpha$) isometrically embedded in the Euclidean cone metric $\varphi_t$ defining the conformal structure $\zeta_t$ on $S_t$ for some $t$.  Using $\phi_t$ to flow this backward and forward produces the required solid torus.  Carrying out this construction carefully leads to the following, whose proof we defer to Section~\ref{S:singular solv tori}.

\begin{lemma} \label{L:good solid torus} For some $h \geq \tfrac{1}2 \arccosh(\tfrac{\tau_\alpha}2-3)$ and $J = [-h,h]$ there is an embedding $\iota \colon \BT_J \to M$ compatible with the suspension $(\pi \colon M \to L\BS^1,(\phi_t))$ which is disjoint from the singularities.
Furthermore, the induced leaf-wise conformal structure $\zeta$ on $\BT_J$ agrees with the standard one on $\BA_{-h}$, and there exists  $\Psi \colon \BA_h^\zeta \to \BA_{-h}^\zeta$ conformal so $\Psi \circ \phi_{2h}^\BT\colon \BA_{-h} \to \BA_{-h}$ is the $r^{th}$ power of a Dehn twist for some integer $r$.
\end{lemma}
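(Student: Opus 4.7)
The plan is to use Rafi's characterization of short curves along Teichm\"uller geodesics \cite{rafi2005characterization} to locate a flat cylinder in the homotopy class of $\alpha$ inside some fiber of the singular-solv structure on $M$, and then produce the desired solid torus $\iota(\BT_J)$ as the flow-saturation of this cylinder under the suspension flow $(\phi_t)$. Compatibility with the flow will be automatic from this construction, so the substance of the argument lies in the quantitative control and in the Dehn twist claim at the end.

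First I would apply Rafi's theorem to the Teichm\"uller axis $\{\varphi_t\}$ of $\phi$. The hypothesis $\tau_\alpha \geq 9$ is the large-twisting hypothesis that forces the existence, at some time $t_0$, of an embedded Euclidean cylinder $A \subset (S_{t_0}, \varphi_{t_0})$ in the free homotopy class of $\alpha$, disjoint from the cone singularities of $\varphi_{t_0}$, with modulus at least $\arccosh(\tau_\alpha/2 - 3)$. After shifting the $\BS^1$-coordinate so that $t_0 = 0$, I would fix an isometry $\iota_0 \colon \BA \to A$ from the standard flat model annulus $\BA$ (normalized so that its modulus equals $2h$) onto $A$, and then define
\[
\iota(x, s) = \phi_s(\iota_0(x)) \quad\text{for } s \in J = [-h, h].
\]
The flow-compatibility identities $\iota \circ \phi^{\BT}_t = \phi_t \circ \iota$ and $\pi \circ \iota = \bar\iota \circ \pi_{\BT}$ are immediate. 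What has to be checked is that $\iota$ is an embedding and that $\iota(\BT_J)$ is disjoint from the singular locus. This will follow because the Teichm\"uller flow is affine in the flat coordinates on $\varphi_s$, so $\iota(\BA \times \{s\})$ remains a flat cylinder for every $s$, and the modulus lower bound on $A$ measures precisely the buffer to the nearest singular geodesic, which is shrunk only by the factor $e^{-h}$ under the flow.

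Next I would verify the leaf-wise conformal structure claim. By the choice of $\iota_0$, the induced structure $\zeta$ on $\BA_{-h}$ is the standard flat product structure on $\BA$. For other $s$, $\BA_s^\zeta$ is an affine image of this standard structure under the Teichm\"uller flow, so it is conformally a flat cylinder, possibly of different modulus. The map $\Psi \colon \BA_h^\zeta \to \BA_{-h}^\zeta$ can then be taken as any conformal identification between these moduli. The composition $\Psi \circ \phi^{\BT}_{2h}$ is a conformal self-homeomorphism of $\BA_{-h}^\zeta$ that preserves the two boundary circles setwise; any such map is isotopic rel boundary to an integer power of the Dehn twist in the core curve, yielding the integer $r$.

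The main obstacle will be the quantitative geometry. Extracting the precise lower bound $h \geq \tfrac{1}{2}\arccosh(\tau_\alpha/2-3)$ from Rafi's theorem, and verifying that the flat cylinder remains embedded and disjoint from the singular locus under the Teichm\"uller flow for time $h$ in each direction, are both nontrivial estimates. They require careful control of how embedded flat cylinders interact with the singular locus of a quadratic differential along a Teichm\"uller geodesic, and of how the modulus of such a cylinder is compressed and stretched by the affine action of the flow.
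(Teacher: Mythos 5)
Your proposal follows the same overall strategy as the paper (locate a Euclidean cylinder about $\alpha$ via Rafi's theorem, flow-saturate it, use Teichm\"uller-flow affineness to control the result), but there are genuine gaps, the most serious of which is the final Dehn twist claim.

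Your argument that ``$\Psi\circ\phi^{\BT}_{2h}$ is a conformal self-homeomorphism of $\BA_{-h}^\zeta$ that preserves the two boundary circles setwise; any such map is isotopic rel boundary to an integer power of the Dehn twist'' has two errors. First, $\Psi\circ\phi^{\BT}_{2h}$ is not conformal: $\Psi$ is conformal but $\phi^{\BT}_{2h}$ is the affine map induced by the Teichm\"uller flow, so the composite is only quasi-conformal. Second, even granting that the composite is the affine shear $x+iy\mapsto(x+ry)+iy$ on the modulus-$1$ cylinder $\BS^1\times[0,1]$, a shear by an arbitrary real $r$ is \emph{not} isotopic rel boundary to an integer Dehn twist power, so the claim fails unless one can show $r\in\mathbb Z$. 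This is exactly the point the paper works hardest on: it does not take the maximal Rafi cylinder, but instead selects a sub-cylinder of modulus
\[
m_0 = \sqrt{\Bigl(\bigl\lfloor\sqrt{(\tfrac{\tau_\alpha}{2}-2)^2-1}\bigr\rfloor\Bigr)^2+1},
\]
carefully arranged so that $\sqrt{m_0^2-1}\in\mathbb Z$. Matching the norm of the Beltrami coefficient of the shear, $|r|/\sqrt{r^2+4}$, to the quasiconformal dilatation $\sqrt{m_0^2-1}/m_0$ of $\phi_{2h}$ then forces $r=\pm2\sqrt{m_0^2-1}\in 2\mathbb Z$. Your proposal has no mechanism to produce this integrality.

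Two further points are imprecise in a way that would need fixing. Your modulus bookkeeping is off: the modulus of $\phi_t(A')$ decays as $m_0/\cosh(2t)$, not by $e^{-h}$, and this is what determines the choice $h=\tfrac12\arccosh(m_0)$ so that $\BA_{-h}$ has modulus exactly $1$; your normalization (giving the central slice modulus $2h$) does not make $\BA_{-h}$ standard. And your embeddedness argument, appealing to ``the buffer to the nearest singular geodesic,'' is too vague; the paper instead argues via the dichotomy for two Euclidean annuli in a cone surface --- they either overlap in a common sub-annulus (which would force a nontrivial power of $\phi$ to preserve the isotopy class of $\alpha$, impossible for a pseudo-Anosov) or cross transversely (impossible since each has modulus $\geq 1$ and the product of moduli of two transversely crossing Euclidean annuli is at most $1$).
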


To fill $M' = M \setminus \iota(\mathrm{int}(\BT_J))$ with $\BT_J$ affecting an arbitrary Dehn twist in $\alpha$ as described in the previous subsection, we will need a different leaf-wise conformal structure on $\BA_J$.  To do this we essentially follow the same idea as in the proof of incompleteness of the Weil-Petersson metric by Wolpert \cite{Wol-incomplete} and Chu \cite{chu}.  Specifically, we recall that the incompleteness comes from paths in Teichm\"uller space exiting every compact set in which a curve on the surface is ``pinched" to have length tending to zero, but which nonetheless has finite WP-length.  We apply this idea to first pinch $\alpha$ to be arbitrarily short along the first half of the interval $J$, then perform as much twisting as we like on a negligible sub-interval in the middle of $J$, and then ``un-pinch''.  By carrying out all the estimates using the complete (infinite area) hyperbolic structure on the interior of the annulus, we may apply the Scwartz-Pick Theorem to obtain upper bounds whenever the annulus is conformally embedded into a Riemann surface.   The specific construction we use provides the following. It will be proven in Section \ref{sec:gluing_tori}.

\begin{lemma} \label{L:sharp leafwise annulus} Given $k \in \mathbb Z$, $h > 0$, $\rho > 1$, there exists a leaf-wise conformal structure $\eta_k$ on the solid torus $\BT_J = \BA \times J$ for $J = [-h,h]$ agreeing with the standard structure (of modulus $1$) on $\BA_{-h} = \BA$, with tangent field $(\mu_s^{\eta_k})$ identically zero in a neighborhood of $\partial \BA \times J$, so that for all $s \in J$,
\[ \int_{\BA_s^{\eta_k}} \vert \mu_s^{\eta_k} \sigma_s^{\eta_k} \vert^2 \leq \frac{8 \rho \pi}{h^2},\]
where $\sigma_s^{\eta_k}$ is the complete hyperbolic metric on the interior of $\BA_s$, and so the local flow $(\phi_s^\BT)$ restricted to the boundary circles are dilations.
Moreover, there is a conformal map $T_k \colon \BA_{h}^{\eta_k} \to \BA_{-h}^{\eta_k}$ so that the composition $T_k \circ \phi_{2h}^\BT \colon \BA_{-h}^{\xi_k} \to \BA_{-h}^{\xi_k}$ is the $k^{th}$ power of a Dehn twist in the core curve.
\end{lemma}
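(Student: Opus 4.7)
The plan is to follow the Wolpert--Chu path through $\CT(\BA)$ which exhibits Weil--Petersson incompleteness at the pinching locus $\ell \to 0$: pinch the core curve of $\BA$, perform the $k$ twists cheaply in the nearly-pinched regime, and un-pinch. Wolpert's asymptotic formulas for the WP metric near this locus give $\Vert \partial/\partial\tau \Vert_{wp}^2 \sim \ell/(2\pi)$ and $\sqrt\ell$ as the approximate WP radial coordinate, so that the WP length from modulus $1$ (i.e.~$\ell = \pi$) to near $\ell = 0$ is at most $\sqrt 2$, and $k$ twists at $\ell = \ell_1$ cost $k\sqrt{\ell_1/(2\pi)} \leq 1$ once $\ell_1 \leq 2\pi/k^2$. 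Thus the full pinch--twist--unpinch path $\gamma \colon [-h,h] \to \CT(\BA)$ from the marking basepoint to the $T_\alpha^k$-translate has WP length $L_0 \leq 2\sqrt 2 + 1$, independent of $k$.

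Next I would realize this path concretely via a family of flat structures on $\BA = S^1 \times [0,1]$. Fix $\delta_0 \in (0, 1/2)$ and keep the collar strips $S^1 \times [0, \delta_0]$ and $S^1 \times [1-\delta_0, 1]$ at their standard flat structure of modulus $\delta_0$ throughout the family, so that the tangent field $\mu_s^{\eta_k}$ will vanish in a neighborhood of $\partial\BA \times J$ and the flow on the boundary circles will be the identity in the fixed flat coordinate (hence a dilation in any conformal parameterization of the circle). The middle region $S^1 \times [\delta_0, 1-\delta_0]$ will carry at time $s$ the flat structure of modulus $n(s)$ with twist $\tau(s)$ against the identity marking. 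Split $J = [-h,h]$ into three equal subintervals and arrange $(n(s), \tau(s))$ so that $n$ grows from $1 - 2\delta_0$ to a very large $N$ with $\tau \equiv 0$ on the first, $\tau$ advances from $0$ to $k$ at $n \equiv N$ on the middle, and $n$ returns to $1 - 2\delta_0$ with $\tau \equiv k$ on the third. Both $\BA_{\pm h}^{\eta_k}$ are then standard modulus-$1$ annuli with markings differing by $k$ twists, so there is a conformal $T_k \colon \BA_h^{\eta_k} \to \BA_{-h}^{\eta_k}$ for which $T_k \circ \phi_{2h}^\BT$ is, by construction, in the class of the $k^{th}$ power of a Dehn twist in the core.

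The main technical obstacle is the sharp pointwise bound $\int |\mu_s^{\eta_k} \sigma_s^{\eta_k}|^2 \leq 8\rho\pi/h^2$. The naive realization, with $\phi_t^\BT$ literally the identity of $\BA$, produces constant Beltramis in the flat coordinate $w = \theta + iy$ of the middle, namely $\mu_s = -n'/(2n)\, d\bar w/dw$ (pinching) or $\mu_s = i\tau'/(2n)\, d\bar w/dw$ (twisting); integrating against the complete hyperbolic metric $\sigma = (\pi/M)\csc(\pi y/M)$ (with $M = n + 2\delta_0$) over the middle, whose hyperbolic area is $(2\pi/M)\cot(\pi\delta_0/M)$ and uniformly bounded in $M$, yields a bound which only permits $n$ to grow by a bounded factor over $J$ and hence fails for large $k$. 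The fix is to realize the same path $\gamma$ by a different family whose tangent Beltramis are \emph{nearly harmonic}: replace the constant representatives by cutoff versions of $\mu = \overline{dw^2/w^2}/\sigma^2$, which inherit a $\sin^2(\pi y/M)$ factor from $\sigma^{-2}$ and vanish to second order at $\partial\BA$, so the cutoff near the boundary costs only $O(\epsilon^3)$ in $L^2$-mass. By the orthogonal decomposition of $\belt$ into harmonic and $\bar\partial$-exact parts, harmonic Beltramis minimize $\int |\mu\sigma|^2$ in their tangent-vector equivalence class at value equal to $\Vert v_s \Vert_{wp}^2$, and the cutoff can be arranged so that the $L^2$-cost is bounded by a factor of $\sqrt\rho$. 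Parameterizing $\gamma$ at constant WP speed gives $\Vert v_s \Vert_{wp} = L_0/(2h)$, and combining with $L_0 \leq 2\sqrt 2 + 1 < 4\sqrt{2\pi}$ yields $\int |\mu_s^{\eta_k}\sigma_s^{\eta_k}|^2 \leq \rho \Vert v_s \Vert_{wp}^2 = \rho L_0^2/(4h^2) \leq 8\rho\pi/h^2$, as required.
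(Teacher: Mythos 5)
Your high-level strategy---pinch, twist, unpinch, with boundary collars held at a fixed flat modulus so the tangent field dies near $\partial\BA\times J$---is the same as the paper's. What differs is the machinery used to get the pointwise $L^2$ bound, and here you have correctly diagnosed a pitfall of a \emph{wrong} construction rather than the one the paper actually uses. You observe that if $\mu_s$ is constant on a fixed middle region $S^1\times[\delta_0,1-\delta_0]$, whose hyperbolic area stays $\approx 2/\delta_0$ as the modulus $M\to\infty$, then $\int|\mu_s\sigma_s|^2$ does not decay and $n(s)$ can only grow by a bounded factor. But in Proposition~\ref{P:pinching path} the Beltrami $\mu_t^\zeta$ has $|\mu_t^\zeta|=1/t$ only on the \emph{middle sub-annulus} of $\BA_t^\zeta$ (the conformally central half, $|y|<t$ inside $|y|\le 2t$), whose hyperbolic area is $2\pi/t$ by Lemma~\ref{L:hyp area half}. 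The product $(1/t)^2\cdot(2\pi/t)=2\pi/t^3$ is summable, and the arclength reparameterization of Corollary~\ref{C: reparameterized} makes it constant $8\pi/h^2$. So the paper gets the bound with piecewise-affine maps and a direct integral computation, with no need for harmonic-Beltrami theory; the support of the Beltrami grows linearly in the conformal modulus, which is exactly what your ``naive'' model misses.

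Your replacement---harmonic representatives $\bar\varphi/\sigma^2$, a constant-WP-speed parameterization, and a cutoff to force compact support---is a genuinely different, more abstract route and could give a marginally sharper constant, but it is not carried out. Three concrete gaps: (i) the Wolpert asymptotics you cite are for finite-area surfaces; for the infinite-area annulus with $\varphi=dw^2$ and $\sigma=(\pi/M)\csc(\pi y/M)|dw|$ a direct computation gives $\|\partial/\partial\tau\|^2\sim\ell^3$ rather than your $\ell/(2\pi)$, and the pinch length comes out $\approx\sqrt2\,\pi$ rather than $\sqrt2$, so your $L_0\le 2\sqrt2+1$ is not right (the correct value happens to still sit just below the $4\sqrt{2\pi}$ threshold, but this needs verification rather than assertion); (ii) the claim that the cutoff ``costs only a factor $\sqrt\rho$'' needs an argument---$\chi\mu$ represents a shorter tangent vector, so the family must be rescaled to hit the right markings, and it must be checked that after rescaling the path still terminates at the $T_\alpha^k$-translate of the standard modulus-$1$ structure at time $h$, which the paper handles by an explicit conformal map in Lemma~\ref{L:pinch-twist}; (iii) one must actually produce a family of conformal structures on $\BA\times J$ whose tangent field \emph{is} the cutoff harmonic Beltrami, i.e.\ integrate the Beltrami ODE and confirm compatibility with the frozen flat collars, the dilation condition on the boundary circles, and the endpoint normalization. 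The paper sidesteps all of this with its explicit $\tilde f_t$.
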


\subsection{Proof of  Theorem~\ref{T : twisted Linch bound 2}.} 
\label{S:bound proof}

This theorem will follow easily from the following more precise version by setting $c = 2\pi(1 + \frac6{h^2})$.

\begin{theorem} \label{T : twisted Linch bound 3}
Suppose $\phi \colon S \to S$ is a pseudo-Anosov on a closed surface, $\alpha \subset S$ is a simple closed curve with $\tau_\alpha \geq 9$, and $k \in \mathbb Z$.  Then
\[
 \Vert T_\alpha^k \circ \phi \Vert_{wp} \leq \Vert \phi \Vert_T \sqrt{2 \pi |\chi(S)| \left(1+\frac{6}{h^2} \right)}
\]
for some $h \geq \tfrac12 \arccosh \left( \tfrac{\tau_\alpha}2 - 3 \right)$.
\end{theorem}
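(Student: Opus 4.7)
The plan is to realize the mapping torus $M_{T_\alpha^k\circ\phi}$ as a Dehn surgery on $M_\phi$ carrying a carefully assembled leafwise conformal structure, and then apply Proposition~\ref{P:length bound} to the resulting suspension. The analytic bulk is packaged into Lemmas~\ref{L:good solid torus} and \ref{L:sharp leafwise annulus}, which I take as black boxes; the work is to splice their outputs together and estimate.

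Starting from the singular--solv suspension $(\pi\colon M_\phi\to L\BS^1,(\phi_t))$ of Section~\ref{S:singular solv structure}, with $L=\Vert\phi\Vert_T$ and tangent field satisfying $|\mu^\zeta_t|\equiv 1$, the assumption $\tau_\alpha\ge 9$ lets me invoke Lemma~\ref{L:good solid torus} to produce a compatible embedding $\iota\colon\BT_J\hookrightarrow M_\phi$ with $J=[-h,h]$ and $h\ge\tfrac12\arccosh(\tau_\alpha/2-3)$, an integer $r$, and a conformal $\Psi\colon\BA_h^\zeta\to\BA_{-h}^\zeta$ so that $\Psi\circ\phi^\BT_{2h}$ equals the $r$-th power of a Dehn twist on $\BA_{-h}$. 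Next I apply Lemma~\ref{L:sharp leafwise annulus} with parameter $k-r$ and some $\rho>1$ (to be chosen below), giving a leafwise conformal structure $\eta_{k-r}$ on a copy of $\BT_J$ whose tangent field vanishes near $\partial\BA\times J$, whose fiberwise norm obeys $\int_{\BA_s}|\mu^{\eta_{k-r}}_s\sigma^{\eta_{k-r}}_s|^2\le 8\rho\pi/h^2$, and whose boundary data supplies a conformal $T_{k-r}\colon\BA^{\eta_{k-r}}_h\to\BA^{\eta_{k-r}}_{-h}$ for which $T_{k-r}\circ\phi^\BT_{2h}$ is the $(k-r)$-th power of a Dehn twist. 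The Dehn surgery of Section~\ref{S:Dehn twist and fill} now replaces $\iota(\BT_J)$ by the new torus carrying $\eta_{k-r}$; the boundary dilations and the vanishing tangent field near $\partial\BA\times J$ ensure that $\zeta$ and $\eta_{k-r}$ assemble into a leafwise conformal structure on the resulting suspension $(\pi\colon M_k\to L\BS^1,(\phi_t))$. Because the combined twist contribution is $r+(k-r)=k$, the monodromy of $M_k$ is conjugate in $\Mod(S)$ to $T_\alpha^k\circ\phi$, so their WP translation lengths agree.

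To estimate, I split the fiber integral at level $t$ as
\[
\int_{S_t}|\mu_t\sigma_t|^2 \;=\; \int_{S_t\ssm\BA_t}|\mu^\zeta_t\sigma_t|^2 \;+\; \int_{\BA_t}|\mu^{\eta_{k-r}}_t\sigma_t|^2,
\]
where $\sigma_t$ is the hyperbolic metric uniformizing the fiber $S_t$ of $M_k$. Since $|\mu^\zeta_t|\equiv 1$, the first term is at most $\int_{S_t}\sigma_t^2=2\pi|\chi(S)|$ by Gauss--Bonnet. On the annulus, the inclusion $\BA_t\hookrightarrow S_t$ is holomorphic, so Schwarz--Pick gives $\sigma_t\le\sigma^{\eta_{k-r}}_t$ pointwise, and the second term is bounded by $8\rho\pi/h^2$ via Lemma~\ref{L:sharp leafwise annulus}. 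Proposition~\ref{P:length bound} then yields
\[
\Vert T_\alpha^k\circ\phi\Vert_{wp}\;\le\;\Vert\phi\Vert_T\sqrt{\,2\pi|\chi(S)|+8\rho\pi/h^2\,}.
\]
Finally, choosing $\rho\in(1,3|\chi(S)|/2]$, a nonempty interval since $|\chi(S)|\ge 2$, upgrades $8\rho\pi/h^2$ to at most $12\pi|\chi(S)|/h^2$ and produces the claimed $\Vert\phi\Vert_T\sqrt{2\pi|\chi(S)|(1+6/h^2)}$.

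The main obstacle I expect is the splicing step itself: verifying that the conformal boundary identifications from the two lemmas truly combine to give a piecewise smooth family of Beltrami differentials on $M_k$ (so that Proposition~\ref{P:length bound} applies as stated), and that the resulting monodromy is $T_\alpha^k\circ\phi$ up to conjugation rather than a sign variant. This reduces to careful bookkeeping with the surgery conventions of Section~\ref{S:Dehn twist and fill}; the analytic heart lies entirely in the two black-box lemmas, whose proofs are deferred to Section~\ref{S:constructions and estimates}.
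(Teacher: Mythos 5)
Your overall strategy is the paper's: embed the good solid torus of Lemma~\ref{L:good solid torus} compatibly with the singular--solv suspension, replace it via the surgery of Section~\ref{S:Dehn twist and fill} by a solid torus carrying a conformal structure from Lemma~\ref{L:sharp leafwise annulus}, verify the structures glue, and estimate the length of the resulting path via Proposition~\ref{P:length bound}, Gauss--Bonnet, and Schwarz--Pick. However, there is a genuine gap in the estimate, plus a bookkeeping slip in the twist arithmetic.

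The gap is in the fiber-integral estimate. You split $\int_{S_t}|\mu_t\sigma_t|^2$ into the contribution from $S_t\ssm\BA_t$ and a \emph{single} annulus $\BA_t$. But a fiber $S_t$ of $M_k$ generally meets the filling solid torus $\iota_k(\BT_J)$ in \emph{several} disjoint annuli. Indeed $J=[-h,h]$ with $h\ge\tfrac12\arccosh(\tau_\alpha/2-3)$, which may greatly exceed the period $L=\Vert\phi\Vert_T$ of the suspension flow; then $\bar\iota\colon J\to L\BS^1$ is far from injective, and the solid torus winds around $M_k$ many times, hitting each fiber in $n(t)$ annuli. Each component contributes up to $8\rho\pi/h^2$, so the correct annular term is $n(t)\cdot 8\rho\pi/h^2$, not $8\rho\pi/h^2$. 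The count $n(t)\le\tfrac32|\chi(S)|$ is where the factor $|\chi(S)|$ actually enters: the $n(t)$ annuli are pairwise nonhomotopic (their cores are related by nonzero powers of the pseudo-Anosov $\phi$, so no two are isotopic), so they form a pants-decomposition-size family. Your device of inflating $\rho$ into $(1,\tfrac32|\chi(S)|]$ to recover the target constant is not mathematically justified --- $\rho$ is a slack parameter that one should send to $1$, and the equality of your final constant with the paper's is a coincidence, not a validation. Once you put the multiplicity factor $n(t)$ in, the paper's bound $\sqrt{2\pi|\chi(S)|+n(t)\cdot 8\rho\pi/h^2}\le\sqrt{2\pi|\chi(S)|+\tfrac32|\chi(S)|\cdot 8\rho\pi/h^2}$ drops out, and letting $\rho\to1$ gives the stated $\sqrt{2\pi|\chi(S)|(1+6/h^2)}$.

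Separately, your twist bookkeeping is off. The conformal gluing at the top face is necessarily $\Psi^{-1}\circ T_{k'}$ (you need $\Psi^{-1}$ to land back in $\BA_h^\zeta$), and conjugating by $\phi_{2h}^\BT$ gives $(\Psi\phi_{2h}^\BT)^{-1}(T_{k'}\phi_{2h}^\BT)=T^{-r}T^{k'}=T^{k'-r}$, so the monodromy is $T_\alpha^{k'-r}\circ\phi$. With your choice $k'=k-r$ this is $T_\alpha^{k-2r}\circ\phi$, not $T_\alpha^k\circ\phi$; the correct accounting is $(k-r)-r$, not $r+(k-r)$. This does not threaten the theorem because $k$ is arbitrary (the paper uses $k'=k$ and observes the resulting $T_\alpha^{k-r}\circ\phi$ sweeps through all powers), but as written your arithmetic is incorrect.
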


\begin{proof} Suppose $\iota \colon \BT_J \to M$ is the solid torus from Lemma~\ref{L:good solid torus} and $\Psi \colon \BA_h \to \BA_{-h}$ and $r$ as in Lemma~\ref{L:good solid torus}.  Fix $k \in \BZ$, $\rho >  1$, and let $\eta_k$ be the leaf-wise conformal structure on $\BT_J$, and $T_k \colon \BA_h \to \BA_{-h}$ as in Lemma~\ref{L:pinch-twist}.  We will prove the required bound for $\Vert T_\alpha^{k-r} \circ \phi \Vert_{wp}$, which will suffice since $k$ was arbitrary.

Let $M' = M - \iota(\mathrm{int}(\BT_J))$ and glue $\BT_J$ to $M'$ along their boundaries via the map
\[ g_k \colon \partial \BT_J \to \partial M' = \iota(\partial \BT_J), \]
given by 
\[ g_k(z,t) = \left\{ \begin{array}{ll} \iota(z,t) & \mbox{for } (z,t)  \in \BA \times \{-h\} \cup \partial \BA \times J\\
\iota \circ \Psi^{-1} \circ T_k(z,t) & \mbox{for } (z,t) \in \BA \times \{h\}. \end{array} \right. \]
The resulting manifold $M_k = M' \cup_{g_k} \BT_J$ admits a flow $(\phi_s)$ which locally agrees with the flow of the same name on $M' \subset M$ and on $\BT_J$ locally agrees with $(\phi_s^{\BT})$, since the original solid torus was compatible.  Let $\iota_k \colon \BT_J \to M_k$ denote the compatible inclusion of the solid torus $\BT_J$.

We claim that the leaf-wise conformal structures $\zeta$ on $M$ (restricted to $M'$) and $\eta_k$ on $\BT_J$ glue together to give a leaf-wise conformal structure $\zeta_k$ on $M_k$.  Near $\BA_{- h}$ in the fiber containing this annulus, $\BA_{-h}^\zeta$ and $\BA_{-h}^{\eta_k}$ are the standard structures, while near $\BA_h$ in the fiber containing it, we note that the map $\Psi^{-1} \circ T_k \colon \BA_h^{\eta_k} \to \BA_h^{\zeta}$ is the composition of two conformal maps, hence is conformal.  Near all other annuli, we have removed a locally isometrically embedded flat cylinder and glued in another one.  Since $(\phi_s^\BT)$ is a dilation on the boundary for both $\zeta$ and $\eta_k$, the gluing maps of boundaries are in fact dilations, and the conformal structures can be explicitly glued together.  Let $(\mu^{\zeta_k}_s)$ denote the tangent field, which is given by $\mu^\zeta_s$ on $S_t \cap M'$ and by $\mu^{\eta_k}_s$ on $S_s \cap \iota_k(\BT_J)$.

The monodromy of $M_k$ is given by $T_\alpha^{k-r} \circ \phi$.  To see this, note that conjugating $\Psi^{-1} T_k$ by $\phi_{2h}^\BT$ we get
\[ (\phi_{2h}^\BT)^{-1}\Psi^{-1} T_k \phi_{2h}^\BT  = (\Psi \phi_{2h}^\BT)^{-1} (T_k \phi_{2h}^\BT) \]
which is the composition of the $k^{th}$ power of a Dehn twist and the inverse of the $r^{th}$ power of a Dehn twist, both in the core curve of the annulus.  Thus we have changed the original monodromy by the $(k-r)^{th}$ power of a Dehn twist in the core curve of $\BA_h$, which is the image of $\alpha$ by the flow.  Hence the monodromy is conjugate (by a power of $\phi$) to $T_\alpha^{k-r}\phi$.

For any $s \in L\BS^1$, set $S_s^0 = S_s \cap M'$.  The closure of the complement of $S_s^0$ in $S_s$ is a disjoint union of annuli (slices of the product $\BT_J = \BA \times J$) which we denote
\[ \overline{S_s - S_{s,0}} = \bigsqcup_{i=1}^{n(s)} \BA_{s,i}.\]
The number of annuli $n(s)$ is bounded by $\frac{3}{2}|\chi(S)|$, which is the largest number of disjoint essential annuli on $S$ that are pairwise nonhomotopic.

Now we write the integral as a sum of integrals on the subsurfaces
\[ \int_{S_s^{\zeta_k}} \vert \mu^{\zeta_k}_s \sigma^{\zeta_k}_s \vert^2 = \int_{S_{s,0}^{\zeta_k}} \vert \mu^{\zeta_k}_s \sigma^{\zeta_k}_s \vert^2 + \sum_{i=1}^{n(s)} \int_{\BA_{s,i}^{\zeta_k}} \vert \mu^{\zeta_k}_s \sigma^{\zeta_k}_s \vert^2. \]
The flow in $M$ has unit Teichm\"uller speed, and so $\vert \mu^\zeta_s \vert = 1$.  Since $\mu^{\zeta_k}_s = \mu^\zeta_s$ on $S_{s,0}^{\zeta_k} = S_{s,0}^\zeta$, the first integral is bounded by the hyperbolic area (as it was with $M$ itself)
\[ \int_{S_{s,0}^{\zeta_k}} \vert \mu^{\zeta_k}_s \sigma^{\zeta_k}_s \vert^2 \leq \Area(S_{s,0}^{\zeta_k}) \leq \Area(S_s^{\zeta_k}) = 2\pi |\chi(S)|.\]
Each annulus $\BA_{s,i}$ is the $\iota_k$--image of $\BA_{s(i)}$ where $\bar \iota_k(s(i)) = s$, and $\iota_k$ induces a conformal map $\BA_{s(i)}^{\eta_k} \to \BA_{s,i}^{\zeta_k}$.  By the Schwartz-Pick theorem, the hyperbolic metric $\sigma_s^{\zeta_k}$ on $S_s^{\zeta_k}$ restricted to $\BA_s^{\zeta_k}$ is bounded above by the complete hyperbolic metric $\sigma_{s(i)}^{\eta_k}$ on $\BA_{s(i)}^{\eta_k} \cong \BA_{s,i}^{\zeta_k}$.  Combining this with Lemma~\ref{L:pinch-twist} implies that for each $i = 1,\ldots, n(s)$,
\[ \int_{\BA_{s,i}^{\zeta_k}} \vert \mu^{\zeta_k}_s \sigma^{\zeta_k}_s \vert^2 \leq \int_{\BA_{s(i)}^\zeta} \vert \mu_{s(i)}^{\eta_k} \sigma_{s(i)}^{\eta_k} \vert^2 \leq \frac{8 \rho \pi}{h^2}.\]

Now, by Proposition~\ref{P:length bound} and since $n(s) \leq \frac32|\chi(S)|$, we have
\begin{eqnarray} \label{eq:twistbound}
\Vert T_\alpha^{k-r} \circ \phi \Vert_{wp} & \leq & \displaystyle{\int_0^L \sqrt{\int_{S_s^{\zeta_k}} \vert \mu^{\zeta_k}_s \sigma^{\zeta_k}_s \vert^2} ds \leq \int_0^L \sqrt{2\pi |\chi(S)| + \sum_{i=1}^{n(s)} \frac{8 \rho \pi}{h^2}} \, \,  ds} \\ 
& \leq & \displaystyle{\Vert \phi \Vert_T \sqrt{2 \pi |\chi(S)| + \frac{8 \rho \pi }{h^2} \left(\frac{3}{2} |\chi(S)| \right)  } }  \notag
\end{eqnarray}
Taking the limit as $\rho \to 1$, we obtain
\begin{eqnarray*} \Vert T_\alpha^{k-r} \circ \phi \Vert_{wp} & \leq & \displaystyle{\Vert \phi \Vert_T \sqrt{2 \pi |\chi(S)| \left(1+\frac{6}{h^2} \right)} }.
\end{eqnarray*}
This completes the proof.
\end{proof}

We also record the following corollary of Theorem~\ref{T : twisted Linch bound 2}.

\begin{corollary} \label{C: basic Teich to wp} If $\Vert \phi \Vert_T \leq \frac{c'}{|\chi(S)|}$ for some $c' >0$, and if $\alpha$ is a curve with $\tau_\alpha(\phi) \geq 9$, then for all $k \in \mathbb Z$
\[ \Vert T_\alpha^k \circ \phi \Vert_{wp} \leq \frac{c'\sqrt{c}}{\sqrt{|\chi(S)|}}.\]
where $c$ is the constant from Theorem~\ref{T : twisted Linch bound 2}.\qed
\end{corollary}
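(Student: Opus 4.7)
The plan is to derive this corollary as an immediate substitution into the bound of Theorem~\ref{T : twisted Linch bound 2}. Since the hypotheses of that theorem are exactly $\phi$ pseudo-Anosov on a closed surface, $\tau_\alpha(\phi) \geq 9$, and $k \in \mathbb Z$, and since the present corollary assumes all of these together with an extra bound on $\Vert\phi\Vert_T$, the conclusion should follow by plugging one inequality into the other.

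Concretely, I would first invoke Theorem~\ref{T : twisted Linch bound 2} to write
\[
\Vert T_\alpha^k \circ \phi \Vert_{wp} \;\leq\; \Vert \phi \Vert_T \sqrt{c\,|\chi(S)|}.
\]
Then I would substitute the hypothesis $\Vert \phi \Vert_T \leq c'/|\chi(S)|$ into the right-hand side to obtain
\[
\Vert T_\alpha^k \circ \phi \Vert_{wp} \;\leq\; \frac{c'}{|\chi(S)|}\sqrt{c\,|\chi(S)|} \;=\; \frac{c'\sqrt{c}}{\sqrt{|\chi(S)|}},
\]
which is precisely the stated bound.

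There is no real obstacle here, since the corollary is a one-line algebraic rearrangement of the theorem; the only subtlety worth mentioning is that the statement implicitly treats the case $\Vert \phi \Vert_T = 0$ as not occurring (it doesn't for pseudo-Anosov $\phi$), so the hypothesis $\Vert \phi \Vert_T \leq c'/|\chi(S)|$ is genuinely a smallness condition rather than a vacuous one. The corollary exists to repackage Theorem~\ref{T : twisted Linch bound 2} in the form most useful for the applications in \S\ref{S:examples} and \S\ref{S:generalities}, where the Teichm\"uller translation length is controlled by Fried--Thurston theory (Theorems~\ref{T:fibered face} and \ref{T:entropy extension}) in the shape $|\chi(S)|\Vert\phi\Vert_T \leq c'$, and the conclusion then exhibits a sequence of pseudo-Anosovs whose normalized WP translation length $\sqrt{|\chi(S)|}\,\Vert T_\alpha^k\circ\phi\Vert_{wp}$ is uniformly bounded by $c'\sqrt{c}$.
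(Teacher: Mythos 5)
Your proof is correct and is precisely the one-line substitution the paper intends; the paper marks the corollary with \qed and omits the argument as immediate from Theorem~\ref{T : twisted Linch bound 2}, so your version matches the paper's approach.
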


\subsection{Examples in all genus.} 
\label{S:examples}

From Corollary~\ref{C: basic Teich to wp} we can prove Corollary~\ref{C: infinitely many}
from the introduction, which we restate here, with an explicit bound on $L$.

\begin{corollary} \label{C:specific infinitely many} For any $L \ge 124$
the set $\Phi_{wp}(L)$ contains infinitely many conjugacy classes of pseudo-Anosov mapping classes for every closed surface of genus $g \geq 2$.
\end{corollary}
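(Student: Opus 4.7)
The plan is to execute the argument sketched immediately following Corollary~\ref{C: infinitely many}. Produce one fixed hyperbolic $3$--manifold $M$ that admits fibrations over the circle with closed fiber $S_g$ of every genus $g\ge 2$, all containing a common simple closed curve $\alpha$, such that the monodromies $\phi_g\colon S_g\to S_g$ enjoy the uniform bounds $|\chi(S_g)|\,\Vert\phi_g\Vert_T\le c'$ and $\tau_\alpha(\phi_g)\ge 9$. With such an $M$ in hand, Theorem~\ref{T : twisted Linch bound 2} gives
\[
\sqrt{|\chi(S_g)|}\,\Vert \phi_g\circ T_\alpha^k\Vert_{wp}\;\le\;|\chi(S_g)|\sqrt{c}\,\Vert\phi_g\Vert_T\;\le\;\sqrt{c}\,c'
\]
for every $g\ge 2$ and every $k\in\BZ$, so every pseudo-Anosov of the form $\phi_g\circ T_\alpha^k$ lies in $\Phi_{wp}(\sqrt c\,c')$. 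The numerical claim $L\ge 124$ then amounts to choosing $M$ so that the resulting constants $c$ and $c'$ satisfy $\sqrt c\,c'\le 124$.

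For the construction of $M$, I would begin with a compact $3$--manifold with $b_1\ge 2$ whose Thurston norm ball has a top-dimensional fibered face $F$ of dimension at least two, chosen so that the class Poincar\'e dual to a simple closed curve $\alpha\subset M$ vanishes on $F$; this guarantees $\alpha$ is isotopic into every fiber associated with a primitive integral class in $\R_+\,\mathrm{int}(F)$. Natural candidates are well-studied examples such as a Dehn filling on the magic manifold or on a braided link complement in the solid torus, or the mapping torus of an explicit pseudo-Anosov on a multi-punctured surface carrying a $\phi$-invariant curve. By inspection of the Thurston norm one verifies that primitive integral rays in $\mathrm{int}(F)$ realize fibrations of every genus $g\ge 2$ containing $\alpha$. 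Theorem~\ref{T:fibered face} then identifies $-\chi(S_g)$ with the Thurston norm on such primitive classes, and Theorem~\ref{T:entropy extension} delivers the uniform bound $|\chi(S_g)|\,\Vert\phi_g\Vert_T\le c'$ via the fact that $\mathfrak n\mathfrak h$ is continuous and constant on rays, hence bounded on any compact subset of $\mathrm{int}(F)$ that meets all required directions. The lower bound $\tau_\alpha(\phi_g)\ge 9$ can be arranged by choosing $\alpha$ to intersect the stable/unstable foliations of every $\phi_g$ suitably, or more brutally by composing each $\phi_g$ once and for all with a fixed large power of $T_\alpha$ (which merely shifts the eventual index $k$ and inflates $c'$ by a controllable factor).

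With these ingredients, the rest of the argument is standard. For each fixed $g$, the composition $\phi_g\circ T_\alpha^k$ is pseudo-Anosov for all but finitely many $k\in\BZ$, since $\alpha$ together with the $\phi_g$-invariant foliations fills $S_g$ (Thurston's construction / Fathi's theorem). The mapping tori $M_{\phi_g\circ T_\alpha^k}$ are obtained from the cusped manifold $M_{\phi_g}\ssm\alpha$ by $1/k$--Dehn filling along $\alpha$, and Thurston's hyperbolic Dehn surgery theorem together with J\o rgensen's finiteness imply that these are pairwise distinct hyperbolic $3$--manifolds for all but finitely many $k$. Since non-conjugate mapping classes produce non-isometric mapping tori, this yields infinitely many conjugacy classes in $\Phi_{wp}(\sqrt c\,c')$ in every genus $g\ge 2$.

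Finally, the explicit numerical bound $L\ge 124$ is obtained by controlling $c$ and $c'$. The constant $c=2\pi(1+6/h^2)$ with $h\ge\tfrac12\arccosh(\tau_\alpha/2-3)$ from Theorem~\ref{T : twisted Linch bound 3} can be pushed as close to $2\pi$ as desired by increasing $\tau_\alpha$, while $c'$ must be extracted from an explicit estimate of topological entropy times Thurston norm over a compact region of the chosen fibered face. I expect the main obstacle to be precisely the first step: exhibiting a single manifold $M$ inside which a fixed curve $\alpha$ occurs in a closed fiber of every genus with a uniform twisting lower bound, and then tracking enough explicit data through the entropy estimates to certify the concrete constant $124$; everything after that is a quick application of Theorem~\ref{T : twisted Linch bound 2} together with well-known Dehn surgery finiteness.
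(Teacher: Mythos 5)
Your proposal correctly identifies the strategy (which the paper itself sketches after Corollary~\ref{C: infinitely many}), but it stops short of supplying the ingredients that actually make the proof go through. You explicitly defer the key step — ``exhibiting a single manifold $M$ inside which a fixed curve $\alpha$ occurs in a closed fiber of every genus with a uniform twisting lower bound'' — and name only generic candidates (the magic manifold, a braided-link complement, etc.) without verifying any of them. Since the statement carries a concrete numerical constant $L\ge124$, this is not a presentational gap but a substantive one: the constant cannot be certified without an explicit construction and explicit dilatation and twisting bounds.

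The paper resolves this by starting from a completely concrete pseudo-Anosov $\phi_2 = T_{\alpha'}T_\alpha^9 T_\beta^{-1}$ on a genus $2$ surface, realized affinely on an explicit square-tiled surface. From that data it reads off $\lambda(\phi_2)=21+2\sqrt{110}$ and $\tau_\alpha(\phi_2)\ge 11$, then uses results from \cite{ALM} to produce a second genus $2$ fiber $S'\subset M$ containing $\alpha$ with $[S']$ linearly independent of $[S]$ in the fibered face, so that $(g-2)[S]+[S']$ yields a genus $g$ fiber $S_g$ containing $\alpha$ for all $g\ge 3$. Convexity of $\mathfrak h$ then gives $|\chi(S_g)|\,\Vert\phi_g\Vert_T\le 4\log(21+2\sqrt{110})<14.95$. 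There is also a point you gloss over: you suggest handling the twisting bound ``by choosing $\alpha$ to intersect the stable/unstable foliations of every $\phi_g$ suitably'' or by precomposing with a power of $T_\alpha$. Neither, as stated, explains why the twisting number is uniform over all genera. The paper instead invokes the fact (from \cite{Minsky-Taylor}) that $\tau_\alpha$ is an invariant of the fibered face, i.e. $\tau_\alpha(\phi_g)=\tau_\alpha(\phi_2)$ whenever $[S_g]$ and $[S]$ lie in the same fibered face; this is the mechanism that transfers the single bound $\tau_\alpha(\phi_2)\ge 11$ to all $\phi_g$ simultaneously, and it is missing from your argument. Without that observation, and without a concrete $\phi_2$, you cannot reach $L\ge124$.
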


\begin{proof} Consider the curves $\alpha,\alpha',\beta$ on a genus two surface $S_2$ shown in Figure~\ref{F:genus 2 example}.  Suppose $\phi_2 \colon S_2 \to S_2$ is the mapping class defined by $\phi_2 = T_{\alpha'} T_{\alpha}^{9}T_\beta^{-1}$.  We can (explicitly) construct a square-tiled flat metric on $S$ so that $\beta$ is vertical and $\alpha,\alpha'$ are horizontal, and so that $\phi_2$ is affine (c.f.~Thurston's construction \cite{Th}).  Specifically, this surface is built from a cylinder of height 9 and circumference 2 about $\alpha$ and height $1$ and circumference $2$ about $\alpha'$ as shown in the middle of the figure.  This can be done so that the derivative of $\phi_2$ is given by the matrix on the right of the figure.

\begin{figure}[htb]
\begin{center}
\begin{tikzpicture}[xscale=1.1,yscale=1.1]
\node at (0,0) {\includegraphics[scale=.2]{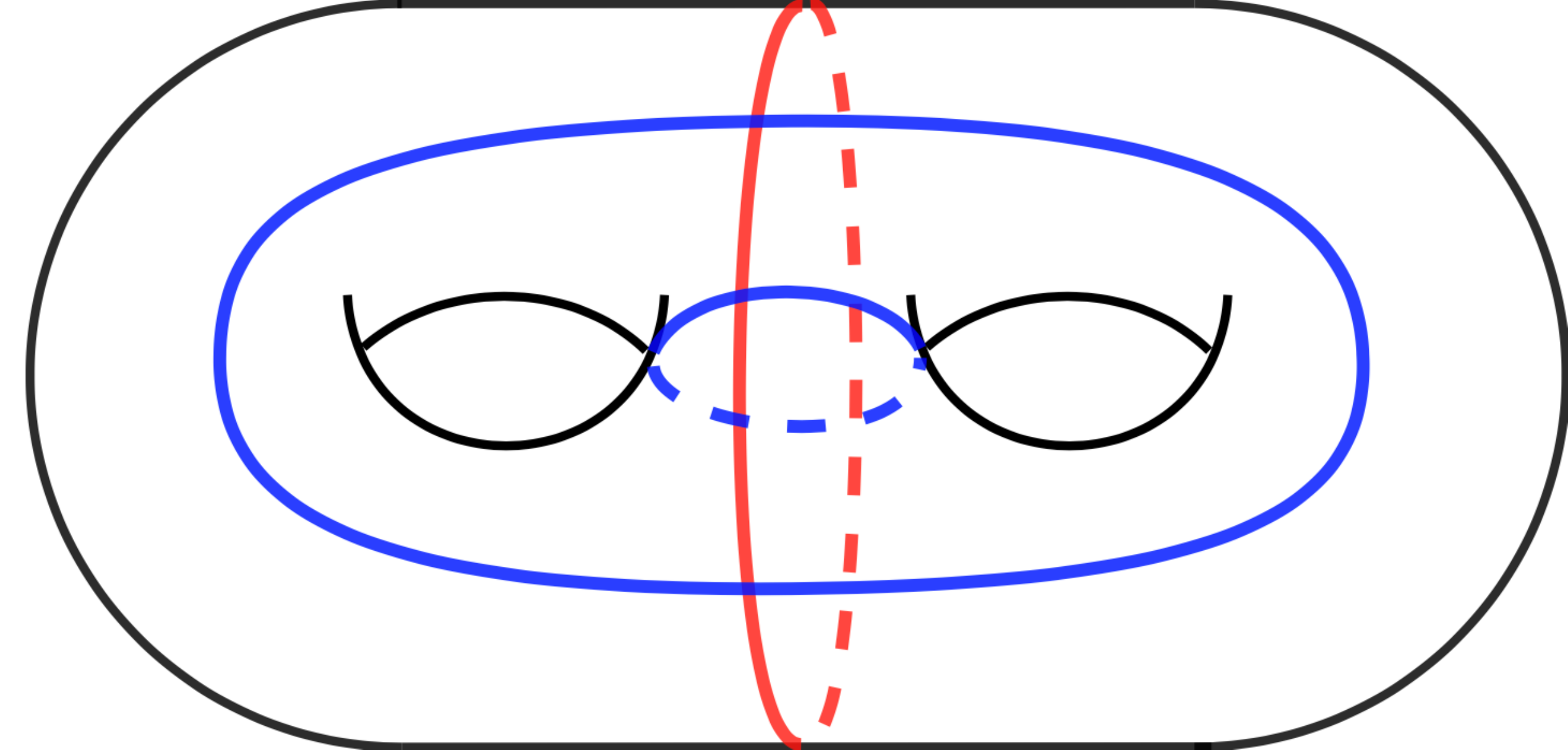}};
\node at (8,0) {$\displaystyle{D\phi_2 = \left( \begin{array}{cc} 41 & 2 \\ 20 & 1 \end{array} \right)}$};
\node at (0,-1.3) {$\beta$};
\node at (-.35,.45) {$\alpha$};
\node at (1.3,-.7) {$\alpha'$};
\draw[thick] (4,-2.5) -- (5,-2.5) -- (5,2) -- (4.5,2) -- (4.5,2.5) -- (3.5,2.5) -- (3.5,2) -- (4,2) --  (4,-2.5);
\draw[thick , red] (4.25,-2.5) -- (4.25,2.5);
\draw[thick , red] (4.75,-2.5) -- (4.75,2);
\draw[thick , red] (3.75,2) -- (3.75,2.5);
\draw[thick , blue] (4,-.25) -- (5,-.25);
\draw[thick , blue] (3.5,2.25) -- (4.5,2.25);
\filldraw (4.5,-2.5) circle (.03cm);
\filldraw (4,-2.5) circle (.03cm);
\filldraw (5,-2.5) circle (.03cm);
\filldraw (4.5,2) circle (.03cm);
\filldraw (4,2) circle (.03cm);
\filldraw (5,2) circle (.03cm);
\filldraw (3.5,2) circle (.03cm);
\filldraw (4.5,2.5) circle (.03cm);
\filldraw (4,2.5) circle (.03cm);
\filldraw (3.5,2.5) circle (.03cm);
\node at (3.8,-.25) {$\alpha$};
\node at (3.3,2.25) {$\alpha'$};
\node at (4.25,2.7) {\tiny 2};
\node at (4.75,2.2) {\tiny 2};
\node at (3.75,1.8) {\tiny 3};
\node at (4.25,-2.7){\tiny 3};
\node at (3.75, 2.7){\tiny 1};
\node at (4.75,-2.7){\tiny 1};
\end{tikzpicture}
\caption{\label{F:genus 2 example} Left: curves on a genus $2$ surface.  Middle: square tiled surface built by isometrically identifying vertical sides by isometry in the obvious way and horizontal sides as indicated by the numbering.  The tall rectangle has height $9$ and width 2, while the smaller one has height $1$ and width $2$.  The horizotonal curves are $\alpha$ and $\alpha'$ as labeled, and the vertical curve is $\beta$.  Right: Derivative of the pseudo-Anosov $\phi_2$.}
\end{center}
\end{figure}

From this we compute that $\lambda(\phi_2) = 21+2\sqrt{110}$, and the eigenspaces of $D\phi_2$, which define the stable/unstable foliations for $\phi_2$, have slopes $\sqrt{110} - 10 \leq \frac12$ and $-\sqrt{110} - 10 \leq -20$.  From the description of the cylinder about $\alpha$, we see that $\tau_{\alpha} \geq 11$.

Now let $M$ be the mapping torus of $\phi_2$.  Since $\alpha$ and $\beta$ intersect twice, as was shown in \cite[Lemma~3.8-3.9]{ALM}, there is a genus $2$ surface $S' \subset M$ which contains $\alpha$ and is transverse to the suspension flow of $\phi_2$.  Thus $S'$ represents a class $[S']$ in the closure of the cone $\mathcal C = \R_+ F$ on the fibered face $F$ of the Thurston norm ball ${\bf B}$ containing the class $[S]$ of $S$.  In fact, it was also shown in \cite{ALM} that $[S]$ and $[S']$ are linearly independent.   It follows that for all $g \geq 3$, the class $(g-2)[S] + [S']$ is represented by a surface $S_g$ which is a fiber in a fibration of $M$ over $S^1$ and contains $\alpha$.  By linearity of the Thurston norm $\mathfrak n$, $-\chi(S_g) = \mathfrak n(S_g) = 2g-2$, and since $[S_g]$ is primitive, $S_g$ is connected of genus $g$;  see Theorem~\ref{T:fibered face}.  Moreover, since both $S$ and $S'$ contain $\alpha$, we can realize $S_g$ so that it also contains $\alpha$.  

Let $\phi_g \colon S_g \to  S_g$ denote the monodromy.  Recall from Theorem~\ref{T:entropy extension} that $\mathfrak h \colon \mathcal C \to \mathbb R_+$, which extends $\Vert \cdot \Vert_T$, is convex and homogeneous of degree $-1$ and thus
\[ \Vert \phi_3 \Vert_T = \mathfrak h([S_3]) \leq \mathfrak h([S_2]) = \Vert \phi_2 \Vert;\]
see \cite[Lemma~3.11]{ALM}.  Applying convexity again we have
\[ \vert \chi(S_g) \vert \Vert \phi_g \Vert_T \leq 4 \log(\lambda(\phi_2)) = 4 \log(21+2 \sqrt{110}) \leq 14.95.\]
The twisting numbers $\tau_\alpha(\phi_g)$ and $\tau_\alpha(\phi_2)$ are equal:
this is because these are twisting numbers for the
monodromy maps of fibers $[S]$ and $[S_g]$ in the same fibered face of the Thurston norm
ball. As observed in \cite{Minsky-Taylor}, the  universal covers of $S$ and
$S_g$ can both be identified with the leaf space of the suspension flow in the 
universal cover of $M$, the stable/unstable laminations of $\phi_2$ and $\phi_g$ have
identical lifts in this cover, and $\tau_\alpha$ is computed using this data in the
annular quotient associated to the conjugacy class $[\alpha]$ in $\pi_1(M)$.

In particular $\tau_\alpha(\phi_g) = \tau_\alpha(\phi_2) \geq 11$.  Therefore, by Corollary~\ref{C: basic Teich to wp}, we have
\[ \Vert T_\alpha^k \circ \phi_g \Vert_{wp} \leq \sqrt{c} \frac{14.95}{\sqrt{\vert \chi(S_g)\vert}} \leq \frac{124}{\sqrt{\vert \chi(S_g) \vert }},\]
where we have used $c =2\pi \left(1+\frac{6}{h^2}\right)$ with $h = \tfrac12 \arccosh \left( \tfrac{11}2 - 3 \right)$.
It follows that $\Phi_{wp}(L)$ contains infinitely many pseudo-Anosov mapping classes on every surface of genus $g \geq 2$ for $L \geq 124$.
\end{proof}

\subsection{Generalities} 
\label{S:generalities}

This construction is much more robust, as the following corollary shows.  Given a closed curve $\alpha$ in $M$, there is a subspace $V_\alpha \subset H^1(M)$ consisting of cohomology classes that evaluate to zero on $\alpha$.

\begin{corollary}  \label{cor:fiber_twist}
Suppose $M$ is a closed fibered $3$--manifold with pseudo-Anosov monodromy $\phi \colon S \to S$, and suppose that $\alpha$ is a curve in $S$.  Then there is a constant $c_0 > 0$ and an $\R_+$--invariant neighborhood $U$ of $[S] \in H^1(M)$ with the following property.  If $S' \subset M$ is another fiber of $M$ with $[S'] \in U \cap V_\alpha$, then $\alpha$ is isotopic into $S'$, and for all $k \in \BZ$, the monodromy $\phi' \colon S' \to S'$ satisfies
\[ \Vert T_\alpha^k \circ \phi' \Vert_{wp} \leq \frac{c_0}{\sqrt {|\chi(S')|}}.\]
\end{corollary}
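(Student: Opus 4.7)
The plan is to follow the template of the proof of Corollary~\ref{C:specific infinitely many}, combining Theorem~\ref{T : twisted Linch bound 2} with the Thurston--Fried theory of fibered faces. By Theorem~\ref{T:fibered face}, $[S]$ lies in the cone $\R_+\mathrm{int}(F)$ over a fibered face $F$ of the Thurston norm ball ${\bf B}$. I will take $U$ to be an $\R_+$-invariant open neighborhood of $[S]$ whose intersection with $\mathrm{int}(F)$ has compact closure. By Theorem~\ref{T:entropy extension} the Fried function $\mathfrak{h}$ is continuous on the cone, and since the Thurston norm $\mathfrak{n}$ is linear on $\R_+\mathrm{int}(F)$, the product $\mathfrak{n}\cdot\mathfrak{h}$ is continuous and homogeneous of degree $0$; it therefore admits a uniform bound $L_U$ on $U$. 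Every primitive integral class in $U$ thus corresponds to a fiber $S'$ whose monodromy $\phi'$ satisfies $|\chi(S')|\cdot\Vert\phi'\Vert_T\leq L_U$.

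For $[S']\in U\cap V_\alpha$, one has $\langle [S'],[\alpha]\rangle = 0$, so $\alpha$ has algebraic intersection zero with $S'$. For $[S']$ sufficiently close to $[S]$ in the fibered face, the associated fibration is a small perturbation of the original and I expect to isotope $\alpha$ to be disjoint from $S'$: the $S'$-flow lines through the compact curve $\alpha$ should stay in a thin tubular neighborhood of the original $S$-flow lines, so the vanishing of algebraic intersection can be promoted to geometric disjointness. Since $M\ssm S'\cong S'\times\R$, any closed curve in the product projects to a closed curve in $S'$ and is isotopic to a level curve, so $\alpha$ is isotopic into $S'$.

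Next I invoke the observation used at the end of the proof of Corollary~\ref{C:specific infinitely many} (following \cite{Minsky-Taylor}): the stable and unstable laminations of the monodromies of any two fibers over a common fibered face lift to the same pair of laminations in the leaf space of the suspension flow in the universal cover of $M$, and $\tau_\alpha$ is computed in the annular cover associated with $[\alpha]$ from precisely this data. Therefore $\tau_\alpha(\phi')=\tau_\alpha(\phi)$. Assuming $\tau_\alpha(\phi)\geq 9$---otherwise one handles a bounded range of exceptional $k$ using Linch's inequality (Theorem~\ref{th:linch}) and absorbs the error into $c_0$---we invoke Theorem~\ref{T : twisted Linch bound 2} to obtain, for every $k\in\BZ$,
\[
\Vert T_\alpha^k\circ\phi'\Vert_{wp}\;\leq\; \Vert\phi'\Vert_T\sqrt{c\,|\chi(S')|}\;\leq\; \frac{L_U}{|\chi(S')|}\sqrt{c\,|\chi(S')|}\;=\;\frac{L_U\sqrt{c}}{\sqrt{|\chi(S')|}},
\]
and we set $c_0:=L_U\sqrt{c}$.

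I expect the main obstacle to be the geometric step of isotoping $\alpha$ into $S'$. The algebraic intersection condition is built into the definition of $V_\alpha$, but promoting it to geometric disjointness relies essentially on the openness of the fibered face $F$ and our freedom to shrink $U$ around $[S]$. The remaining steps---invariance of twisting across a fibered face, boundedness of $\mathfrak{n}\cdot\mathfrak{h}$ on compact pieces of the cone, and the concluding application of Theorem~\ref{T : twisted Linch bound 2}---are straightforward bookkeeping.
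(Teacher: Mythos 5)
Your main argument, under the assumption $\tau_\alpha(\phi)\geq 9$, follows a genuinely simpler route than the paper's: you apply the invariance of $\tau_\alpha$ across a fibered face of $M$ itself together with Fried's theorem for $M$ to bound $|\chi(S')|\cdot\Vert\phi'\Vert_T$, and then feed this directly into Theorem~\ref{T : twisted Linch bound 2}. The paper instead passes to the Dehn-filled manifolds $M_j$, works with the fibered face $\FF_j$ of $M_j$ rather than that of $M$, and applies Fried's theorem to $M_j$. The reason the paper does this is that the corollary makes no hypothesis on $\tau_\alpha(\phi)$, and your shortcut only works when $\tau_\alpha(\phi)\geq 9$ already holds.

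The gap is your fallback for $\tau_\alpha(\phi)<9$, which does not work as written. There is no \emph{bounded range of exceptional $k$}: the hypothesis $\tau_\alpha\geq 9$ in Theorem~\ref{T : twisted Linch bound 2} is about the base pseudo-Anosov, not about $T_\alpha^k\circ\phi$, so if $\tau_\alpha(\phi')<9$ the theorem says nothing about $T_\alpha^k\circ\phi'$ for any $k$. Absorbing errors via Linch's inequality also fails, since $\Vert T_\alpha^k\circ\phi'\Vert_T$ is not uniformly bounded in $k$, so that route does not give a bound of the form $c_0/\sqrt{|\chi(S')|}$ independent of $k$. The correct fix is exactly the paper's detour: fix $j$ with $T_\alpha^j\circ\phi$ pseudo-Anosov and $\tau_\alpha(T_\alpha^j\circ\phi)\geq 9$, write $T_\alpha^k\circ\phi'={T_\alpha}^{k-j}\circ(T_\alpha^j\circ\phi')$, and apply the theorem to $T_\alpha^j\circ\phi'$. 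To uniformly bound $\Vert T_\alpha^j\circ\phi'\Vert_T$ over all the relevant $S'$ one then needs to regard $T_\alpha^j\circ\phi$ and $T_\alpha^j\circ\phi'$ as monodromies of the \emph{same} manifold $M_j$, check that the classes $[S'_j]$ land in a fixed compact subset of the cone over a single fibered face of $M_j$ (this is where the injectivity of $H_2(\cM)\to H_2(M)$ with image $V_\alpha$ is used), and only then invoke Fried's theorem. Your version omits this passage to $M_j$, and it is not a bookkeeping step one can skip when $\tau_\alpha(\phi)<9$.

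Two smaller remarks. First, the step where you promote vanishing algebraic intersection of $\alpha$ with $S'$ to an actual isotopy of $\alpha$ into $S'$ needs the paper's $1$-form argument or an equivalent; in particular your assertion that \emph{any} closed curve in $S'\times\R$ is isotopic to a level curve is false in general (knotted curves are not), so you need to use that $\alpha$ is a level curve of $S$ and that the new foliation is a $C^0$-small perturbation. Second, your computation in the main case does go through, and taking $c_0=L_U\sqrt{c}$ is consistent with the paper's $c_0=c_j\sqrt{c}$ (with $c_j$ the Fried bound on $K_j$).
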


Observe that this corollary does not require any assumption on the twisting coefficient of $\alpha$, though one loses explicit control on $c_0$ because of this.

\begin{proof}
Since $S$ is a fiber we can represent $[S]\in H_2(M) \cong H^1(M)$ as a  closed 1-form
$\omega$ which is nowhere zero.  Fixing a Riemannian metric on $M$, for each $\epsilon$
let $U_\epsilon$ denote a neighborhood of $[S]$ with the property that every element in
$U_\epsilon$ is represented by a closed nowhere-zero $1$-form $\omega'$ with
$|\omega'-\omega|<\epsilon$.
The primitive integral classes in $\R_+U_\epsilon$ are precisely the classes in
$\R_+U_\epsilon$ dual to fibers of a fibration.

We would like to say that
there is a regular neighborhood $N$ of $\alpha$  and an $\epsilon>0$ such that
when $S'$ is a fiber with $[S']\in \R^+ U_\epsilon \cap V_\alpha$ then after an isotopy 
both $S$ and $S'$ meet $N$ in an annulus containing $\alpha$.

First choose a regular neighborhood $W$ which can be written in the form $A\times(-b,b)$
where $A_0 = A\times\{0\}$ is a regular neighborhood of $\alpha$ in $S$ and $\omega|_W = dt$,
where $t$ is the coordinate for $(-b,b)$. 
If $\omega'$ is the 1-form representing $[S']$ then, since $[S']\in V_\alpha$, we have
$\int_\alpha\omega'=0$ so $\omega'$ is exact in $W$.  For suitably small $\epsilon$ we can
write $\omega' = dh$  on $W$. 

Thus for small $\epsilon$ the level sets of $h$ are surfaces transverse
to the $t$ direction. It follows that there is a smaller regular neighborhood $N$ of $\alpha$
so that for each $\omega'$ representing a point in  $U_\epsilon$
there is a compactly supported isotopy in $W$
(moving along vertical lines) which takes $\omega'$ to $dt$ in $N$. 
The integral manifold of (the isotope of) $\ker\omega'$ passing through $\alpha$ contains the annulus
$A_0\intersect N$, and after another isotopy any other intersections of it with $N$ can be
pushed off. This gives the desired fiber $S'$. 

Now in $H_1(\boundary N)$ we have a basis $\mu,\lambda$ where $\mu$ is the boundary of a
meridian disk and $\lambda $ is represented by a component of $\boundary (A_0\intersect N)$.
Let $\cM = M\ssm N$.  For $j\in \mathbb Z$, the Dehn filling of $\cM$  associated to $\mu + j\lambda$ gives a
manifold $M_j$ that can be simultaneously described as the mapping torus of $T^j_\alpha \phi$, and
as the mapping torus of ${T'_\alpha}^j \phi'$, where $\phi$ is the monodromy of the
fibering of $S$, $\phi'$ is the monodromy of the fibering of $S'$, and $T_\alpha$ and
$T'_\alpha$ are the Dehn twist of $\alpha$ in $A_0\intersect N$, considered as a homeomorphism of $S$
and $S'$ respectively.

We want to relate homology classes in $M$ to those in $M_j$. 
Note that the inclusion map
$H_2(\cM)\to H_2(M)$ is injective (since $\boundary\cM = \boundary N$ has one component),
and that its image is exactly $V_\alpha$ (this is an exercise in Poincar\'e duality).
Thus we can invert it on $V_\alpha$ and compose with the inclusion $H_2(\cM) \to H_2(M_j)$ to obtain a linear map
$V_\alpha\to H_2(M_j)$. Let $\FF_j$ denote the fibered face of the Thurston norm ball in
$H_2(M_j)$ containing the image $[S_j]$ of $[S]$. Then $\R_+\FF_j$ is open, and we 
may choose $\epsilon$ sufficiently small that it contains the 
closure of the image of $\R_+U_\epsilon\intersect V$, which we denote $K_j$.
Let $[S'_j]$ denote the image of $[S']$ in $H_2(M_j)$.

Now let $U = \R^+U_\epsilon$, 
and choose $j>0$ so that $T^j_\alpha \phi$ is pseudo-Anosov and $\tau_\alpha(T^j_\alpha\phi) \ge 9$. 

For any $S'$ such that $[S']\in U\intersect V_\alpha$,
as observed above in the proof of Corollary \ref{C: basic Teich to wp},
the twisting numbers $\tau_\alpha(T^j_\alpha\phi)$ and $\tau_\alpha({T'_\alpha}^j\phi')$
are equal since $[S_j]$ and $[S'_j]$ are in the same fibered face $\FF_j$. 

In particular $\tau_\alpha({T'_\alpha}^j\phi')\ge 9$ and
we can therefore apply Theorem~\ref{T : twisted Linch bound 2} to ${T'_\alpha}^j \phi'$, obtaining
\begin{align*}
  \Vert {T'_\alpha}^k \circ \phi' \Vert_{wp}  &= \Vert {T'_\alpha}^{k-j} \circ {T'_\alpha}^j \phi'
  \Vert_{wp} \\
  &\le \Vert {T'_\alpha}^j \phi' \Vert_T \sqrt{c |\chi(S')|}.
\end{align*}
To complete the argument we need to obtain a bound on 
$\Vert {T'_\alpha}^j \phi' \Vert_T$. 
As described in Section \ref{S:Thurston-Fried}, by Theorem~\ref{T:entropy extension} there is a $c_j \ge 1$ depending only on $K_j$
such that if $S''$ is a fiber of $M_j$ such that $[S''] \in K_j$ with monodromy $\phi_{S''}$ then $|\chi(S'')| \Vert \phi_{S''} \Vert_T \le c_j$.
We conclude that
$$
  \Vert {T'_\alpha}^k \circ \phi' \Vert_{wp} 
\le \frac{c_j\sqrt{c}}{\sqrt{|\chi(S')|}},
$$
as required.
\end{proof}

\section{Constructions and estimates} 
\label{S:constructions and estimates}

In this section, we will describe constructions of some specific leaf-wise conformal structures and estimates for the Weil-Petersson norm of their tangent fields.  We begin with some generalities on reparameterizing and gluing leaf-wise conformal structures.  Next we set up some notation for annuli and carry out some computations of hyperbolic area, before turning to the construction of the required leaf-wise conformal structures on solid tori from Lemma~\ref{L:sharp leafwise annulus} necessary for our main construction.  We end this section with the construction of the solid torus from Lemma~\ref{L:good solid torus} we remove from the original manifold $M$.

\subsection{Reparameterizing and gluing} 
Given a piecewise differentiable, monotone, surjective map $g \colon I \to J$, we define $G \colon \Sigma \times I \to \Sigma \times J$ by $G(x,t) = (x,g(t))$.  If $\zeta$ is a leaf-wise conformal structure on $\Sigma \times J$, then it pulls back via $G$ to one on $\Sigma \times I$ denoted $G^*\zeta$, so that $\Sigma_s^{G^*\zeta} = \Sigma_{g(s)}^\zeta$ as Riemann surface structures on $\Sigma$.  For any $s \in I$, the resulting path of Beltrami differentials in $\belt_1(\Sigma_s^{G^*\zeta}) = \belt_1(\Sigma_{g(s)}^{\zeta})$ is just the composition $t \mapsto \nu_{g(t),g(s)}$, and consequently the tangent field is given by
\[ \mu_s^{G^*\zeta} = g'(s) \mu_{g(s)}^\zeta.\]

Suppose that $\zeta_1$ and $\zeta_2$ are leaf-wise conformal structures on $\Sigma \times J_1$ and $\Sigma \times J_2$, respectively, such that $J_1 \cap J_2 = \{s\}$ and there is a conformal map $g \colon \Sigma_s^{\zeta_1} \to \Sigma_s^{\zeta_2}$.  Then we can {\em glue} together the leaf-wise conformal structures to a single leaf-wise conformal structure $\zeta$ on $\Sigma \times J_1 \cup J_2$ using the conformal map $g$.  More precisely, we define the leaf-wise conformal structure on $\Sigma \times J_1$ to be $\zeta_1$ and on $\Sigma \times J_2$ to be $g^*\zeta_2$.  The path in Teichm\"uller space $J \to \CT(\Sigma)$ is the concatenation of the path $J_1 \to \CT(\Sigma)$ with $J_2 \to \CT(\Sigma)$ after adjusting the marking of the latter by the conformal map $g$.

\subsection{Annuli and solid tori}
As above, we let $L \BS^1 = \mathbb R/L\mathbb Z$ denote a circle of length $L >0$.
Any annular Riemann surfaces with finite modulus can be uniformized by Euclidean metrics of the form $L \BS^1 \times J$, where $J \subset \mathbb R$ is an interval; the conformal modulus is $|J|/L$.  Equivalently, this surface is the quotient
\[ L \BS^1 \times J = \{ z = x+iy \in \mathbb C \mid y \in J \}/\langle z \mapsto z+L \rangle.\]
Note that $L \BS^1 \times J$ is conformally equivalent to $r L \BS^1 \times r J$, for any $r > 0$, and we will write $m \BA$ for any annular Riemann surface with conformal modulus $m$.
For $m=1$, we just write $1 \BA = \BA$, though we also allow $\BA$ to denote a topological annulus.

The {\em middle sub-annulus} of the annulus $2m \BA$ of modulus $2m$ is the sub-annulus of modulus $m$ invariant under the full conformal automorphism group.  More precisely, it is given by
\[ \BS^1 \times \left[-\tfrac{m}2,\tfrac{m}2 \right] \subset \BS^1 \times [-m,m].\]
Although the middle sub-annulus encompasses half the Euclidean area, it's hyperbolic area is inversely proportional to the modulus, as the next lemma shows.
\begin{lemma} \label{L:hyp area half} Suppose $m\BA \subset 2m\BA$ is the {\em middle sub-annulus} and $\sigma$ is the complete hyperbolic metric on the interior of $2m\BA$.  Then
\[ \int_{m\BA} \sigma^2 = \frac{\pi}m.\]
\end{lemma}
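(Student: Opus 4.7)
My plan is an explicit computation. First I would uniformize $2m\BA$ as the flat cylinder
\[
2m\BA \;\cong\; \{z=x+iy : -m<y<m\}\big/\langle z\mapsto z+1\rangle,
\]
so that the middle sub-annulus $m\BA$ corresponds to the strip $\{-m/2\le y\le m/2\}$ in the same quotient. The complete hyperbolic metric $\sigma$ is invariant under $z\mapsto z+1$, so it descends from a metric on the universal cover $\widetilde{2m\BA}=\{-m<y<m\}$, and it suffices to find that.

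Next I would produce the hyperbolic metric on $\widetilde{2m\BA}$ by writing down an explicit biholomorphism to a standard model. The map $z\mapsto e^{\pi z/(2m)}$ sends $\{-m<y<m\}$ conformally onto the right half-plane $\{\operatorname{Re}\zeta>0\}$ (since $-m<y<m$ corresponds to $-\pi/2<\arg\zeta<\pi/2$). Pulling back the right-half-plane metric $|d\zeta|/\operatorname{Re}\zeta$ through this change of variables gives, after a short computation,
\[
\sigma \;=\; \frac{\pi}{2m\cos(\pi y/(2m))}\,|dz|.
\]

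With this in hand, the area of the middle sub-annulus is a one-variable integral, since the integrand is independent of $x$ and $x\in[0,1]$. The key step is the substitution $u=\pi y/(2m)$, which turns the integral into one of $\sec^2 u$:
\[
\int_{m\BA}\sigma^2
=\int_{-m/2}^{m/2}\!\frac{\pi^2}{4m^2\cos^2(\pi y/(2m))}\,dy
=\frac{\pi}{2m}\int_{-\pi/4}^{\pi/4}\sec^2 u\,du
=\frac{\pi}{2m}\bigl[\tan u\bigr]_{-\pi/4}^{\pi/4}
=\frac{\pi}{m}.
\]

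There is no real obstacle here; the main thing to be careful about is matching conventions so that the modulus is $2m$ and the middle sub-annulus is indeed $\{|y|\le m/2\}$, which is the sub-annulus of modulus $m$ fixed by all conformal automorphisms of $2m\BA$. The computation also makes transparent why the answer is $\pi/m$ rather than depending more delicately on $m$: the area element scales like $1/m^2$ while the interval of integration scales like $m$, yielding an overall $1/m$.
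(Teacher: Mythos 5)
Your computation is correct and is essentially identical to the paper's: both uniformize $2m\BA$ as the flat cylinder $\{|y|<m\}/\langle z\mapsto z+1\rangle$, derive the hyperbolic metric $\sigma=\frac{\pi}{2m\cos(\pi y/(2m))}|dz|$ by pulling back a standard half-plane metric, and then integrate $\sec^2$ over the middle strip. The only cosmetic difference is that you map to the right half-plane via $z\mapsto e^{\pi z/(2m)}$ while the paper maps to the upper half-plane via $z\mapsto ie^{\pi z/(2m)}$, which is the same computation up to a rotation.
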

\begin{proof} We view the annulus $2m\BA$ as the quotient of
\[ H_m = \{ x+iy \in \mathbb C \mid |y| \leq m\}, \]
by the action of $\langle z \mapsto z+1 \rangle$.  A fundamental domain for the middle sub-annulus is
\[ D_{\tfrac{m}2} = \{x+iy \mid 0 \leq x \leq 1, \, -\tfrac{m}2 \leq y \leq \tfrac{m}2 \} \subset H_{\tfrac{m}2} \subset H_m.\]

To find the hyperbolic metric on the interior of $H_m$, map it to the upper half-plane with the conformal map $z \mapsto ie^{\frac{\pi z}{2m}}$.  The pull-back to $H_m$ of the hyperbolic metric $\frac{|dz|}{Im(z)}$ in the upper-half plane is 
\[ \tilde \sigma = \frac{\pi}{2m\cos(\tfrac{\pi y}{2m})} |dz| = \frac{\pi}{2m}\sec(\tfrac{\pi y}{2m}) |dz|. \]
Therefore, integrating this over $D_{\tfrac{m}2}$ provides the required computation:
\[ \int_{\BA_m} \!\!\! \sigma^2 = \!\! \int_{D_{\tfrac{m}2}} \!\!\! \tilde \sigma^2 = \!\! \int_{D_{\tfrac{m}2}} \!\!\!\! \tfrac{\pi^2}{4m^2} \sec^2(\tfrac{\pi y}{2m}) |dz|^2 \!\!
 = \! \tfrac{\pi^2}{4m^2} \!\! \int_0^1 \!\! \int_{-m/2}^{m/2} \!\!\!\! \sec^2(\tfrac{\pi y}{2m}) \, dy \, dx   \! = \! \frac{\pi}m . \]

\end{proof}

In the next three subsections, we carry out the explicit construction of a leaf-wise conformal structure on solid tori and estimates on the Weil-Petersson norm of the associated tangent fields.  This really divides into three parts: pinching, twisting, and then combining the two.  Some care is necessary with the parameterizations involved since, when using these solid tori in Dehn filling, a fiber will typically meet the solid torus in many copies of the annulus.

\subsection{Pinching construction/estimates}  
Here we explicitly describe the leaf-wise conformal structure on a solid torus that ``pinches the core curve,'' and estimate the Weil-Petersson norm of its tangent field.  Specifically, for $h > 0$ and solid torus $\BA \times [0,h)$ we construct a leaf-wise conformal structure that has modulus $1$ on $\BA_0$, modulus of $\BA_t$ tending to infinity as $t \to h$, and so that the Weil-Petersson norm of the tangent field is bounded by $\frac{\sqrt{8 \pi}}h$ for every $t$.  The desired structure will be obtained from the one in the following proposition by an appropriate reparameterization.

\begin{proposition} \label{P:pinching path} There exists a leaf-wise conformal structure
$\zeta$ on $\BA \times [1,\infty)$ such that
    \begin{enumerate}[(a)]
    \item $\BA^\zeta_t$ is isomorphic to $t\BA$.
      \item The associated flow $\phi_s:\BA^\zeta_t \to \BA^\zeta_{t+s}$ 
        is conformal in neighborhoods of the
        boundaries, and is a dilation on the boundaries.
      \item The tangent field $\mu^\zeta_t$ satisfies $|\mu^\zeta_t| = 1/t$ in the middle annulus of
        $\BA^\zeta_t$, and is identically $0$ outside it. 
    \end{enumerate}
    In particular, we have
    \begin{equation}\label{pinch integral 1}
\int_{\BA_t^\zeta} \vert \mu_t^\zeta \sigma_t^\zeta \vert^2 = \frac{2 \pi}{t^3}, 
    \end{equation}
and therefore
\begin{equation}\label{pinch integral 2}
 \int_1^\infty \sqrt{\int_{\BA_t^\zeta} \vert \mu_t^\zeta \sigma_t^\zeta \vert^2} \, dt =
 \sqrt{8 \pi} .
\end{equation}
\end{proposition}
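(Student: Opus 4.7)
The plan is to construct $\zeta$ explicitly by giving, for each $t$, a conformal parametrization $\psi_t(x,u) = (x, \tilde\psi_t(u))$ from $\BA = \BS^1 \times [-1,1]$ to $t\BA = \BS^1 \times [-t/2, t/2]$, where $\tilde\psi_t$ is an odd $C^1$ diffeomorphism $[-1,1] \to [-t/2, t/2]$; take $\zeta$ to be the pullback leaf-wise conformal structure. A direct complex-analytic computation in the coordinate $z = x + i\,\tilde\psi_t(u)$ shows that the tangent field of the associated flow is
\[
\mu_t^\zeta \;=\; -\,\frac{\partial_t \tilde\psi_t'(u)}{2\,\tilde\psi_t'(u)},
\]
so property (c) translates to the pair of pointwise conditions that $\tilde\psi_t'(u)$ be $t$-independent off the middle sub-annulus and satisfy $|\partial_t \log \tilde\psi_t'| = 2/t$ on it; the latter forces $\tilde\psi_t'$ to scale like $t^2$ wherever it varies in $t$.

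The hard part will be reconciling this quadratic scaling on the middle with the linear modulus constraint $\int_{-1}^{1}\tilde\psi_t'\,du = t$. My resolution is to let the middle shrink in the topological coordinate like $1/t$: set $\tilde\psi_t(u) = (t^2/2)\,u$ on $|u| \le 1/(2t)$, so that this interval maps onto $[-t/4, t/4]$, which is exactly the middle sub-annulus of $t\BA$. On $u \in [1/(2t),1]$ (and symmetrically) I will take $\tilde\psi_t(u) = G(u) + t/2$; matching both value and first derivative to the inner piece at $u = 1/(2t)$ for every $t \ge 1$ forces $G(v) = -1/(8v)$ on $(0, 1/2]$. Extend $G$ smoothly to $[1/2,1]$ with $G(1) = 0$, for instance by the affine rule $G(v) = (v-1)/2$, whose derivative $1/2$ on $[1/2,1]$ matches $G'(1/2) = 1/(8(1/2)^2) = 1/2$ of the earlier piece. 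This yields a $C^1$ map $\tilde\psi_t$ whose derivative depends on $t$ only on the shrinking middle $|u| \le 1/(2t)$.

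Properties (a) and (b) are then immediate: $\tilde\psi_t$ is a $C^1$ bijection onto $[-t/2,t/2]$, so $\BA_t^\zeta \cong t\BA$; and on the fixed neighborhood $|u| \in [1/2, 1]$ of the boundary, $\tilde\psi_t(u) = G(u) + t/2$, so the flow in conformal coordinates is the translation $v \mapsto v + s/2$, which is conformal with dilation behavior on the boundary circles. For (c), the displayed formula gives $\mu_t^\zeta = -1/t$ on the middle (where $\tilde\psi_t' = t^2/2$ and $\partial_t \tilde\psi_t' = t$) and $\mu_t^\zeta = 0$ off it since $\tilde\psi_t'$ is $t$-independent there. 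Finally, to derive the integrals I will apply Lemma~\ref{L:hyp area half} with $2m = t$: the middle sub-annulus has hyperbolic area $\pi/(t/2) = 2\pi/t$, and so
\[
\int_{\BA_t^\zeta} |\mu_t^\zeta \sigma_t^\zeta|^2 \;=\; \frac{1}{t^2}\cdot \frac{2\pi}{t} \;=\; \frac{2\pi}{t^3},
\]
which is (\ref{pinch integral 1}); integrating $\sqrt{2\pi}\,t^{-3/2}$ from $1$ to $\infty$ yields $2\sqrt{2\pi} = \sqrt{8\pi}$, giving (\ref{pinch integral 2}).
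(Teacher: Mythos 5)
The proposal is correct and takes essentially the same approach as the paper: in both cases the leaf-wise structure is built from an explicit piecewise family of maps that quadratically dilates a middle region shrinking like $1/t$ in the domain coordinate, matches conformally (by a reciprocal/translation) across a transition zone, and is a pure vertical translation near the boundary, with the tangent field computed directly from the derivative ratio and the integral then read off from Lemma~\ref{L:hyp area half}. The only difference is cosmetic framing — you define $\zeta$ via conformal parametrizations $\psi_t$ of a fixed topological annulus with the flow being the identity in product coordinates, whereas the paper declares the flow maps $f_t:\BA\to t\BA$ themselves — and the associated rescaling of constants; the Beltrami formula $\mu_t^\zeta=-\partial_t\tilde\psi_t'/(2\tilde\psi_t')$ you derive is exactly what the paper's computation of $\tilde f_{t,t+s}$ produces.
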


Note that this construction gives a path of finite Weil-Petersson length that diverges in
Teichm\"uller space, by embedding $\BA_1$ into some Riemann surface and using this family
to deform. This recovers the result, due to Wolpert \cite{Wol-incomplete} and Chu \cite{chu}, that the WP metric is incomplete.

\begin{proof}
To begin, we define a $1$--parameter family of quasi-conformal maps $(\tilde f_t \colon
\mathbb C \to \mathbb C)_{t \geq 1}$, so that,
defining 
\[ H_s = \{x+iy \in \mathbb C \mid |y| \leq s \},\]
\begin{itemize}
\item $\tilde f_t$ is the identity on the real coordinate $x$,
\item $\tilde f_t$ takes $H_{1/t}$ to $H_t$, dilating the $y$ coordinate by $t^2$,
  \item $\tilde f_t$ takes $H_1$ to $H_{2t-1}$ and $H_2$ to $H_{2t}$. 
\end{itemize}

Explicitly, $\tilde f_t$ is defined by
\[ \tilde f_t(x+iy) = \left\{ \begin{array}{ll}
x+i(t^2y) & \mbox{ for } 0 \leq y \leq 1/t,  \\\\
x+i(2t-\frac1y) & \mbox{ for } 1/t \leq y \leq 1, \\\\
x+i(y+2(t-1))& \mbox{ for } 1\leq  y,\\ \end{array} \right.  \]
and extend over $x+iy \in \mathbb C$ with $y < 0$ by reflection.

Now since horizontal translations in $\mathbb C$ commute with $\tilde f_t$ and preserve
$H_s$ for all $s,t$, the map $\tilde f_t$ descends to a map
\[ f_t \colon \BA = H_2/\langle z \mapsto z+4 \rangle \to H_{2t}/\langle z \mapsto z+4 \rangle = t \BA.\]
These maps define a leaf-wise conformal structure $\zeta$ on the (noncompact) solid torus
$\BA \times [1,\infty)$ so that $\BA_t^\zeta = t \BA$ and $\phi^\BT_{t-1} = f_t \colon
  \BA_1^\zeta \to \BA_t^\zeta$.
   
Parts (a) and (b) of the proposition follow immediately from the definition. 
For part (c) and the two integral equalities, we need to compute the tangent field $\mu^\zeta_t$.

For any fixed $t \geq 1$, $s \geq 0$, consider the maps $\tilde f_{t,t+s} = \tilde f_{t+s} \circ \tilde f_t^{-1} \colon \mathbb C \to \mathbb C$.  If  $y \geq t$, then
\[ \tilde f_{t,t+s}(x+iy) = x+i(y+2s),\]
while if $-y \geq t$, then $\tilde f_{t,t+s}(x+iy) = x+i(y-2s)$.

On the other hand, if $0 \leq |y| < t$, then for sufficiently small $s > 0$, we have
\[ \tilde f_{t,t+s}(x+iy) = \tilde f_{t+s}(x+i(y/t^2)) = x+i \left( \frac{(t+s)^2}{t^2} y \right).\]
In particular, if $\tilde \nu_{t,t+s}$ is the Beltrami coefficient of $\tilde f_{t,t+s}$, then for each $t$, its derivative with respect to $s$ at $s = 0$ is
\[ \tilde \mu_t(x+iy) = \left\{ \begin{array}{cl} 0 & \mbox{ for } |y| \geq t \\
-\frac1t & \mbox{ for } |y| < t \end{array} \right.\]

The tangent field $\mu_t^\zeta$ is the descent of $\tilde \mu_t$, so we see that part (c)
holds.  Integrating this over  $\BA_t^\zeta = t\BA$, we have
\[ \int_{\BA_t^\zeta} \vert \mu_t^\zeta \sigma_t^\zeta \vert^2 = \frac1{t^2} \int_{\tfrac{t}2\BA} \sigma_t^2 = \frac1{t^2} \frac{\pi}{t/2} = \frac{2\pi}{t^3},\]
where the second equality is by  Lemma~\ref{L:hyp area half}. This
proves equality (\ref{pinch integral 1}).
 An easy computation proves (\ref{pinch
  integral 2}). 
\end{proof}

We can reparameterize the family of Proposition \ref{P:pinching path} by the moral equivalent of
Weil-Petersson arclength to obtain the following: 

\begin{corollary} \label{C: reparameterized} For any $h > 0$, there exists a differentiable homeomorphism $g \colon [0,h) \to [1,\infty)$ so that if $G \colon \BA \times [0,h) \to \BA \times [1,\infty)$ is the induced map of products and $\xi = G^*\zeta$ the pull-back of the leaf-wise conformal structure $\zeta$ on $\BA \times [1,\infty)$ as above, with tangent field $\mu_s^\xi$, then
\[ \int_{\BA_s^\xi} \vert \mu^\xi_s \sigma^\xi_s \vert^2 = \frac{8\pi}{h^2}.\]
\end{corollary}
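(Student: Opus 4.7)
The plan is to determine $g$ by solving the ODE that forces the integral to be constant. Let $G(x,s) = (x,g(s))$ with $g$ a piecewise differentiable, monotone surjection $[0,h)\to[1,\infty)$, and set $\xi = G^*\zeta$. By the reparameterization rule from Section~\ref{S:leaf-wise conformal structure}, the tangent field satisfies
\[
\mu_s^\xi = g'(s)\,\mu_{g(s)}^\zeta,
\]
while the Riemann surface structures, and hence the uniformizing hyperbolic metrics, agree: $\sigma_s^\xi = \sigma_{g(s)}^\zeta$ under the identification $\BA_s^\xi = \BA_{g(s)}^\zeta$. Therefore, using Proposition~\ref{P:pinching path}(\ref{pinch integral 1}),
\[
\int_{\BA_s^\xi} |\mu_s^\xi \sigma_s^\xi|^2 \;=\; (g'(s))^2 \int_{\BA_{g(s)}^\zeta} |\mu_{g(s)}^\zeta \sigma_{g(s)}^\zeta|^2 \;=\; \frac{2\pi\,(g'(s))^2}{g(s)^3}.
\]

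To make this quantity identically $8\pi/h^2$, I would choose $g$ to solve the separable ODE
\[
g'(s) = \frac{2}{h}\,g(s)^{3/2},\qquad g(0)=1,
\]
which integrates (via $d(-2 g^{-1/2}) = (2/h)\,ds$) to the closed form
\[
g(s) = \frac{h^2}{(h-s)^2}.
\]
This $g$ is a smooth, increasing diffeomorphism from $[0,h)$ onto $[1,\infty)$, so $G$ is a valid reparameterization.

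Finally I would verify directly: $g'(s) = 2h^2/(h-s)^3$, so $(g'(s))^2/g(s)^3 = 4/h^2$, and
\[
\int_{\BA_s^\xi} |\mu_s^\xi \sigma_s^\xi|^2 \;=\; \frac{2\pi\,(g'(s))^2}{g(s)^3} \;=\; \frac{8\pi}{h^2},
\]
as claimed. There is no substantive obstacle here: once the formula $\int_{\BA_t^\zeta}|\mu_t^\zeta\sigma_t^\zeta|^2 = 2\pi/t^3$ from Proposition~\ref{P:pinching path} is in hand, the only thing to do is to solve a trivial first-order separable ODE. The only minor point worth noting is that the boundary conformality/dilation property in Proposition~\ref{P:pinching path}(b) is preserved by $G$, since $G$ only rescales the $t$-coordinate and does not alter the leafwise structures or the fact that $\mu^\xi_s$ vanishes near $\partial\BA$; this will be needed later when gluing these tori into mapping tori.
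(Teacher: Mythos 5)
Your proof is correct and takes essentially the same approach as the paper: both arrive at the same reparameterization $g(s) = h^2/(h-s)^2$, the paper by first computing a Weil--Petersson arclength parameter $s(t)=\sqrt{8\pi}(1-t^{-1/2})$ from Proposition~\ref{P:pinching path}, inverting, and rescaling to $[0,h)$, while you write down and solve the same separable ODE $g' = (2/h)g^{3/2}$ directly. The key inputs---the reparameterization formula $\mu_s^{G^*\zeta} = g'(s)\mu_{g(s)}^\zeta$ and the identity $\int_{\BA_t^\zeta}|\mu_t^\zeta\sigma_t^\zeta|^2 = 2\pi/t^3$---are used identically.
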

\begin{proof} Set
\[ s(t) =  \int_1^t \sqrt{\int_{\BA_u^\zeta} \vert \mu_u^\zeta \sigma_u^\zeta \vert^2} \, du = \int_1^t \frac{\sqrt{2\pi}}{u^{3/2}} \, du = \sqrt{8 \pi}(1-t^{-1/2}).\]
Let $t(s)$ be the inverse (given by $t(s) = \frac{8 \pi}{(\sqrt{8\pi}-s)^2}$) which satisfies
\[ t'(s) = \frac{1}{s'(t(s))} = \frac{t(s)^{3/2}}{\sqrt{2 \pi}}.\]
For any $h > 0$, set $g(s) = t(\sqrt{8\pi} s / h)$, and observe that $g([0,h)) = [1,\infty)$ with
\[ g'(s) = \frac{\sqrt{8\pi}}{h} t'(\sqrt{8\pi} s / h) = \frac{\sqrt{8\pi}}{h} \frac{t(\sqrt{8\pi}s/h)^{3/2}}{\sqrt{2\pi}} = \frac{2g(s)^{3/2}}h.\]
Therefore, setting $G \colon \BA \times [0,h) \to \BA \times [1,\infty)$ the reparameterization and $\xi = G^*\zeta$, we have
\begin{eqnarray*}
\int_{\BA_s^\xi} \vert \mu_s^\xi \sigma_s^\xi \vert^2 & = & \int_{\BA_{g(s)}^\zeta} \vert g'(s) \mu_{g(s)}^\zeta \sigma_{g(s)}^\zeta \vert^2 \\\\
& = & \frac{4g(s)^3}{h^2} \int_{\BA_{g(s)}^\zeta} \vert \mu_{g(s)}^\zeta \sigma_{g(s)}^\zeta \vert^2\\\
& = & \frac{4 g(s)^3}{h^2} \frac{2\pi}{g(s)^3} = \frac{8\pi}{h^2},
\end{eqnarray*}
as required.
\end{proof}

\subsection{Twisting construction/estimates.}  
Here we use an affine twist in the middle sub-annulus and a simple construction to produce a solid torus $\BA \times [-\epsilon,\epsilon]$ with arbitrarily small $\epsilon$, and leaf-wise conformal structure for which the Weil-Petersson norm of the tangent field is also arbitrarily small, and which affects the $k^{th}$ power of the Dehn twist in the core curve.  The trade-off is that the moduli of $\BA_t$ are required to be large. 

\begin{lemma} \label{L:pinch-twist} Given $k \in \BZ$, $\epsilon > 0$, and $\delta > 0$ there exists $m_k > 0$ so that for any $m > m_k$, there exists a leaf-wise conformal structure $\xi_k$ on $\BT_\epsilon = \BA \times[-\epsilon,\epsilon]$ with associated tangent field $(\mu^{\xi_k}_s)$, which is identically zero outside the middle sub-annulus for each $\BA_s^{\xi_k}$, so that for all $s \in [-\epsilon,\epsilon]$, $\BA_s^{\xi_k}$ has modulus $2m$,
\[ \int_{\BA_s^{\xi_k}} \vert \mu^{\xi_k}_s \sigma_s^{\xi_k} \vert^2 < \delta,\]
and the local flow $(\phi_s^\BT)$ restricted to the boundary circles are dilations.

Moreover, there is a conformal map $T'_k \colon \BA_{\epsilon}^{\xi_k} \to \BA_{-\epsilon}^{\xi_k}$ so that the composition $T'_k \phi_{2\epsilon}^\BT \colon \BA_{-\epsilon}^{\xi_k} \to \BA_{-\epsilon}^{\xi_k}$ is the $k^{th}$ power of a Dehn twist in the core curve.
\end{lemma}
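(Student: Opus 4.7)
The plan is to construct the leaf-wise conformal structure $\xi_k$ so that each slice $\BA_s^{\xi_k}$ has modulus $2m$, and the path $s\mapsto[\BA_s^{\xi_k}]$ in Teichm\"uller space is a reparametrized affine shear supported in the middle sub-annulus whose total accumulated twist from $s=-\epsilon$ to $s=\epsilon$ represents the $k$-th power of the Dehn twist in the core. The key mechanism is that when the modulus $2m$ is large, a $k$-th power Dehn twist admits an affine representative whose Beltrami coefficient has pointwise magnitude $O(k/m)$, while by Lemma~\ref{L:hyp area half} the middle sub-annulus has hyperbolic area only $\pi/m$; combined this yields $\int|\mu\sigma|^2=O(k^2/(\epsilon^2m^3))$, which falls below any prescribed $\delta$ for $m$ sufficiently large.

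For the construction, I would identify $\BA$ with $\BS^1\times[-m,m]$ so the standard structure has modulus $2m$. Choose a smooth $\Xi\colon[-m,m]\to[0,1]$ with $\Xi\equiv 0$ on $[-m,-m/2]$, $\Xi\equiv 1$ on $[m/2,m]$, and $|\Xi'|=O(1/m)$; and a smooth $r\colon[-\epsilon,\epsilon]\to[0,1]$ constant near the endpoints with $r(-\epsilon)=0$, $r(\epsilon)=1$, and $|r'|=O(1/\epsilon)$. Define the self-diffeomorphism $\Phi_s(x,y)=(x+r(s)k\Xi(y),y)$ of $\BA$ and declare $\xi_k$ to be the leaf-wise conformal structure for which each $\Phi_s\colon\BA_s^{\xi_k}\to\BA^{\mathrm{std}}$ is conformal. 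Every $\BA_s^{\xi_k}$ then has modulus $2m$, and since $\Phi_s$ acts as a translation $(x,y)\mapsto(x+r(s)k,y)$ (for $y$ near $m$) or as the identity (for $y$ near $-m$) in a neighborhood of $\partial\BA$, the pulled-back structure agrees with the standard one there and the flow $(\phi_s^\BT)$ restricts to the identity on each boundary circle, a dilation by factor $1$. Taking $T'_k:=\Phi_\epsilon\colon\BA_\epsilon^{\xi_k}\to\BA_{-\epsilon}^{\xi_k}$, which is conformal because $\Phi_{-\epsilon}=\mathrm{id}$, the composition $T'_k\circ\phi_{2\epsilon}^\BT$ equals $\Phi_\epsilon=(x,y)\mapsto(x+k\Xi(y),y)$ as a self-map of the underlying annulus, a standard representative of the $k$-th power Dehn twist in the core.

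The tangent-field estimate comes from computing in the standard model: $\Phi_{s+\tau}\circ\Phi_s^{-1}(x,y)=(x+(r(s+\tau)-r(s))k\Xi(y),y)$ is a pure shear, and differentiating its Beltrami coefficient at $\tau=0$ gives $|\mu_s^{\xi_k}|=\tfrac{1}{2}|r'(s)|k|\Xi'(y)|$, supported in the middle sub-annulus and of pointwise size $O(|r'(s)|k/m)$. Since Beltrami magnitudes and the hyperbolic area form are preserved under the conformal identification $\Phi_s$, applying Lemma~\ref{L:hyp area half} yields
\[
\int_{\BA_s^{\xi_k}}|\mu_s^{\xi_k}\sigma_s^{\xi_k}|^2 \;\le\; \|\mu_s^{\xi_k}\|_\infty^2\cdot\frac{\pi}{m} \;=\; O\!\left(\frac{k^2}{\epsilon^2 m^3}\right)
\]
uniformly in $s$, so one may take any $m_k$ larger than the cube root of $Ck^2/(\epsilon^2\delta)$ for the implied constant $C$. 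The only subtlety is arranging the $L^2$ bound uniformly in $s$, which is automatic from the uniform bound $\|r'\|_\infty=O(1/\epsilon)$; no step of the proof presents a genuine obstacle once the affine-shear construction is in place.
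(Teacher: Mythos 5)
Your proof is correct and takes essentially the same approach as the paper: define $\BA_s^{\xi_k}$ as the pullback of the standard modulus-$2m$ structure by a shear supported in the middle sub-annulus, compute the tangent field as a shear-rate Beltrami of pointwise size $O(k/(\epsilon m))$, and bound $\int|\mu\sigma|^2$ via Lemma~\ref{L:hyp area half}. The only cosmetic difference is that the paper uses piecewise-linear shear profiles in both $s$ and $y$, which makes $(f_s)$ an honest one-parameter group and yields the exact value $k^2\pi/(16\epsilon^2 m^3)$ with an explicit $m_k=\sqrt[3]{k^2\pi/(16\delta\epsilon^2)}$, whereas your smooth cutoffs give the same $O(k^2/(\epsilon^2 m^3))$ bound with an unspecified constant.
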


\begin{proof}  First, we set
\[ m_k = \sqrt[3]{\tfrac{k^2 \pi}{16\delta \epsilon^2}}.\]
Now fix any $m > m_k$ and let $c = \frac{k}{2\epsilon m}$. We identify $2m\BA$ conformally with 
\[ 2m \BA = \BS^1 \times \left[ -\tfrac{m}2,\tfrac{3m}2 \right].\]
We define $f_s \colon 2m\BA \to 2m\BA$, for all $s \in \R$ by
\[ f_s(x+iy) = \left\{ \begin{array}{ll} 
x+iy & \mbox{for } -\frac{m}2 \leq y \leq 0\\
x+sc y + iy & \mbox{for } 0\leq y \leq m\\
x+ scm + iy & \mbox{for } m \leq y \leq \frac{3m}2,\\ \end{array} \right.  \]
and note that this is an affine shear on the middle sub-annulus $\BS^1 \times [0,m]$, and is conformal outside.
For $s = 2 \epsilon$, the effect of $f_{2 \epsilon}$ on the top boundary component of the annulus is
\[ f_{2 \epsilon}\left(x+i\tfrac{3m}2\right) = (x+ 2 \epsilon c m) + i \tfrac{3m}2 =( x + k) + i \tfrac{3m}2.\]
Since $f_{2\epsilon}$ is the identity on the bottom component, it follows that $f_{2 \epsilon}$ is the $k^{th}$ power of a Dehn twist in the core curve of $2m\BA$.

The maps $(f_t)_{t \in [0,2\epsilon]}$ define a leaf-wise conformal structure on $\BA \times [0,2 \epsilon]$ and by translating the interval back by $\epsilon$ and pulling back, it gives the desired leaf-wise conformal structure $\xi_k$ on $\BA \times [-\epsilon,\epsilon]$ so that $\BA_t$ has modulus $2m$ for all $t$.  Furthermore, the maps $\phi_{t-s}^\BT \colon \BA_s^{\xi_k} \to \BA_t^{\xi_k}$, in product coordinates above, are given by $f_{t-s}$ (because $f$ is already a flow: $f_s \circ f_t = f_{s+t}$).  Thus, the tangent field $(\mu_s^{\xi_k})$ is easily seen to be zero outside the middle sub-annulus and by a computation $\mu_s^{\xi_k} = \frac{ci}2$ in the middle sub-annulus.
Appealing to Lemma~\ref{L:hyp area half} we see that
\[ \int_{\BA_s^{\xi_k}} \vert \mu_s^{\xi_k} \sigma_s^{\xi_k} \vert^2 = \frac{c^2}4 \int_{\BA_s^{\xi_k}} \left( \sigma_s^{\xi_k} \right)^2 = \frac{c^2\pi}{4m} = \frac{k^2 \pi}{16\epsilon^2 m^3} < \frac{k^2 \pi}{16\epsilon^2}\frac1{m_k^3}  = \delta.\]
 Since $f_{2\epsilon}$ is the $k^{th}$ power of a Dehn twist from $2m \BA$ {\em to itself}, the existence of the required map $T'_k$ follows. 
\end{proof}

\subsection{Gluing together solid tori.}  
\label{sec:gluing_tori}

Here we combine the constructions above to produce a single solid torus that first pinches, then twists, then ``un-pinches".  This is obtained by stacking together a sufficiently large piece of the solid torus from Corollary~\ref{C: reparameterized}, a solid torus from \ref{L:pinch-twist}, and then another copy of the first solid torus, but with the reversed flow.  This will prove the following lemma claimed in Section~\ref{S:good solid tori}.

\bigskip

\noindent
{\bf Lemma~\ref{L:sharp leafwise annulus}.} {\em Given $k \in \mathbb Z$, $h > 0$, $\rho > 1$, there exists a leaf-wise conformal structure $\eta_k$ on the solid torus $\BT_J = \BA \times J$ for $J = [-h,h]$ agreeing with the standard structure (of modulus $1$) on $\BA_{-h} = \BA$, with tangent field $(\mu_s^{\eta_k})$ identically zero in a neighborhood of $\partial \BA \times J$, so that for all $s \in J$,
\[ \int_{\BA_s^{\eta_k}} \vert \mu_s^{\eta_k} \sigma_s^{\eta_k} \vert^2 \leq \frac{8 \rho \pi}{h^2},\]
where $\sigma_s^{\eta_k}$ is the complete hyperbolic metric on the interior of $\BA_s$, and so the local flow $(\phi_s^\BT)$ restricted to the boundary circles are dilations.
Moreover, there is a conformal map $T_k \colon \BA_{h}^{\eta_k} \to \BA_{-h}^{\eta_k}$ so that the composition $T_k \circ \phi_{2h}^\BT \colon \BA_{-h}^{\xi_k} \to \BA_{-h}^{\xi_k}$ is the $k^{th}$ power of a Dehn twist in the core curve.}

\bigskip

\begin{proof} The idea of the proof is to first use Corollary~\ref{C: reparameterized} to define $\eta_k$ on $\BA_s$ for $s \in [-h, -\epsilon_k]$, for some $\epsilon_k > 0$ so that the modulus of $\BA_{-\epsilon_k}^{\eta_k}$ is as large as we like.  Then define $\eta_k$ on $\BA_s$, for $s \in [-\epsilon_k,\epsilon_k]$ using Lemma~\ref{L:pinch-twist} to ``do all the twisting".  Finally, we define $\eta_k$ on $\BA_s$ for $s \in [\epsilon_k,h]$, by ``reversing" what was done for $s \in [-h,-\epsilon_k]$.  We now explain the details.

Fix $\epsilon > 0$ sufficiently small so that $h^2 \leq \rho(h-\epsilon)^2$, and note that this implies
\[ \frac{8\pi}{(h-\epsilon)^2} \leq \frac{8 \rho \pi}{h^2}. \]
We will choose $\epsilon_k \in (\epsilon,h)$ and define $\eta_k$ on $\BA \times [-h,h]$ by defining it on three sub-intervals:
\[ J_- = [-h,-\epsilon_k] , \, \, J_0 = [-\epsilon_k,\epsilon_k], \, \mbox{ and } J_+ = [\epsilon_k,h] .\]

First, let $\xi$ be the leaf-wise conformal structure on $\BA \times [0,h-\epsilon)$ from Corollary~\ref{C: reparameterized}.  We can translate the interval $[-h,-\epsilon)$ to $[0,h-\epsilon)$ and pull back to obtain a leaf-wise conformal structure on $\BA \times[-h,-\epsilon)$, also denoted $\xi$, so that for all $s \in [-h,-\epsilon)$
\[ \int_{\BA_s^\xi} \vert \nu^\xi_s \sigma^\xi_s \vert^2 = \frac{8\pi}{(h-\epsilon)^2} \leq \frac{8 \rho \pi}{h^2},\]
as in Corollary~\ref{C: reparameterized}.

Observe that the modulus of $\BA_s^\xi$ tends to infinity as $s$ tends to $-\epsilon$.  Therefore, we may choose $\epsilon_k \in (\epsilon,h)$ so that the modulus of $\BA_{-\epsilon_k}^\xi$ is at least $2m$ where $m > m_k$ and $m_k$ is as in Lemma~\ref{L:pinch-twist} for the given $k$, our chosen $\epsilon$, and for $\delta = \frac{8 \rho \pi}{h^2}$.  Now we describe the leaf-wise conformal structure $\eta_k$ on each of the intervals, explaining how they are glued together.\\

\noindent {\bf The interval $J_-$.}  We define $\eta_k$ to be the restriction of $\xi$ on $J_-$.  From the above we have
\begin{equation} \label{E:bounds final} \int_{\BA_s^{\eta_k}} \vert \nu^{\eta_k}_s \sigma^{\eta_k}_s \vert^2 \leq \frac{8 \rho \pi}{h^2}, \end{equation}
for all $s \in J_-$.\\

\noindent {\bf The interval $J_0$.} Next, let $\xi_k$ be the conformal structure on $\BA \times [-\epsilon,\epsilon]$ as in Lemma~\ref{L:pinch-twist} so that for all $s \in [-\epsilon,\epsilon]$, $\BA_s^{\xi_k}$ has modulus $2m$, and
\[ \int_{\BA_s^{\xi_k}} \vert \mu_s^{\xi_k} \sigma_s^{\xi_k} \vert^2 = \delta. \]
Apply the affine map $J_0 = [-\epsilon_k,\epsilon_k] \to [-\epsilon,\epsilon]$ and let $\eta_k$ be the pulled back conformal structure on $\BA \times J_0$.  Since $\epsilon < \epsilon_k$, it follows that
\[ \int_{\BA_s^{\eta_k}} \vert \mu_s^{\eta_k} \sigma_s^{\eta_k} \vert^2 < \delta = \frac{8 \rho \pi}{h^2},\]
for $s \in J_0$.  That is, (\ref{E:bounds final}) also holds for $s \in J_0$.  Because the modulus of $\BA_s^{\eta_k}$ is $2m$ for each $s \in [-\epsilon_k,\epsilon_k]$, we may glue to the two leaf-wise conformal structure on $\BA \times J_-$ and $\BA \times J_0$.\\

For the final interval, let $T'_k \colon \BA_{\epsilon_k}^{\eta_k} \to \BA_{-\epsilon_k}^{\eta_k}$ be the conformal map from Lemma~\ref{L:pinch-twist} so that $T'_k \phi_{2\epsilon_k} \colon \BA_{-\epsilon_k}^{\eta_k} \to \BA_{-\epsilon_k}^{\eta_k}$ is the $k^{th}$ power of a Dehn twist in the core curve of $\BA_{-\epsilon_k}^{\eta_k}$.\\

\noindent {\bf The interval $J_+$.}  On this final interval, the path of beltrami differentials is the one on $J_-$, ``run backward and remarked by $T'_k$''.  More precisely, we define $\eta_k$ on $\BA_{\epsilon_k+s}$, for $0 \leq s \leq h-\epsilon_k$ so that the map
\[ \phi_{-s}^\BT T'_k \phi_{-s}^\BT \colon \BA_{\epsilon_k+s}^{\eta_k} \to \BA_{-\epsilon_k-s}^{\eta_k} \]
is conformal.  Alternatively, we can define $\eta_k$ on $\BA \times J_+$ as the pull-back of the leaf-wise conformal structure by the map $T'_k \times g \colon \BA \times J_+ \to \BA \times J_-$, where $g \colon J_+ \to J_-$ is given by $g(t) = -t$.  Since $T'_k$ is conformal from $\BA_{\epsilon_k}^{\eta_k} \to \BA_{-\epsilon_k}^{\eta_k}$, the leaf-wise conformal structures glue together at $\epsilon_k$, and because we are simply following the first part of the path backward, (\ref{E:bounds final}) holds for all $s \in J_-$, and hence for all $s \in [-h,h]$.

Finally, let
\[ T_k = \phi_{-h+\epsilon_k}^{\BT} T'_k \phi_{-h+\epsilon_k}^{\BT} \colon \BA_h^{\eta_k} \to \BA_{-h}^{\eta_k}. \]
By construction, this map is conformal.  Composing this map with $\phi_{2h}^\BT\colon \BA_{-h}^{\eta_k} \to \BA_h^{\eta_k}$, and from the fact that $(\phi_s^\BT)$ is a local flow on the product, it follows that on $\BA_{-h}^{\eta_k}$ we have
\[ T_k \phi_{2h}^\BT = \phi_{-h+\epsilon_k}^{\BT} T'_k \phi_{-h+\epsilon_k}^{\BT}  \phi_{2h}^\BT = (\phi_{h-\epsilon_k}^\BT)^{-1} \left( T'_k \phi_{2\epsilon_k} \right) \phi_{h-\epsilon_k}^\BT. \]
Therefore, $T_k \phi_{2h}^\BT$ is the conjugate by $\phi_{h-\epsilon_k}^\BT$ of the $k^{th}$ power of a Dehn twist in the core curve of $\BA_{-\epsilon_k}$, and hence is the  $k^{th}$ power of a Dehn twist in the core curve of $\BA_{-h}$, as required.

Since $\mu_s^{\eta_k}$ agrees with the tangent fields from the leaf-wise conformal structures from Corollary~\ref{C: reparameterized} and Lemma~\ref{L:pinch-twist}, it is identically zero outside the middle sub-annulus for each fiber, and hence is identically zero in a neighborhood of $\partial \BA \times J$.  Furthermore, on each piece the flow $\phi_s^\BT$ is a dilation on boundaries of annuli.  These observations complete the proof.
\end{proof}

\subsection{Singular-solv solid tori}
\label{S:singular solv tori}

For the remainder of this section, we assume $M = M_\phi$ with $\phi$ pseudo-Anosov and that $( \pi \colon M \to L\BS^1,(\phi_t))$ is the suspension equipped with the leaf-wise conformal structure from the singular-solv structure, and prove the remaining lemma.

\bigskip

\noindent
{\bf Lemma~\ref{L:good solid torus}} {\em For some $h \geq \tfrac{1}2 \arccosh(\tfrac{\tau_\alpha}2-3)$ and $J = [-h,h]$ there is an embedding $\iota \colon \BT_J \to M$ compatible with the suspension $(\pi \colon M \to L\BS^1,(\phi_t))$ which is disjoint from the singularities.
Furthermore, the induced leaf-wise conformal structure $\zeta$ on $\BT_J$ agrees with the standard one on $\BA_{-h}$, and there exists  $\Psi \colon \BA_h^\zeta \to \BA_{-h}^\zeta$ conformal so $\Psi \circ \phi_{2h}^\BT\colon \BA_{-h} \to \BA_{-h}$ is the $r^{th}$ power of a Dehn twist for some integer $r$.}

\begin{proof} [Proof of Lemma~\ref{L:good solid torus}.]
As we pointed out in \S\ref{S:good solid tori}, $\tau_\alpha(\phi) = d_\alpha(\mathcal L_+,\mathcal L_-)$, is the projection distance to the annulus with core curve $\alpha$ of the stable and unstable laminations for $\phi$.
Since $\tau_\alpha = \tau_\alpha(\phi) \geq 9$, by a result of Rafi \cite{rafi2005characterization} (see \cite{Leininger-Reid-pA} for this specific statement), in any fiber $S_t^\zeta$, there is a Euclidean cylinder neighborhood of (a representative of $\alpha$), and at the balance time for $\alpha$, the modulus of the maximal Euclidean annulus is greater than $\frac{\tau_\alpha}2 - 2$.  By translating, we may assume that the balance time of $\alpha$ is $0$.

Now we let
\[ m_0 = \sqrt{\left( \left\lfloor \sqrt{ \left( \frac{\tau_\alpha}2 - 2\right)^2 - 1} \right\rfloor \right)^2 + 1}.\]
An elementary computation shows
\begin{equation}
\frac{\tau_\alpha}2 - 3 \leq m_0 \leq \frac{\tau_\alpha}2 - 2 \mbox{ and } \sqrt{m_0^2 - 1} \in \mathbb Z.
\end{equation}
It follows that in $S_0^\zeta$ there is a Euclidean annular neighborhood $A'$ of a flat geodesic representative of $\alpha$ of modulus $m_0$.  Another computation shows that the modulus at any time $t \in \mathbb R$ of $\phi_t(A')$ is given by $m_t = \frac{m_0}{\cosh(2t)}$.  Therefore, setting $h = \frac12 \arccosh(m_0)$, we see that $\BA = \phi_{-h}(A')$ is a Euclidean annulus of modulus $1$, and hence by scaling the singular-solv metric if necessary, we can find an isometry $\hat \iota \colon \BS^1 \times [0,1] \to \BA$.  We also note
\[ h = \frac12 \arccosh(m_0) \geq \frac12 \arccosh \left( \tfrac{\tau_\alpha}2-3 \right)  \geq \frac12 \arccosh( 3/2 ) > 0.\]

For $J = [-h,h]$, we define $\iota \colon \BT_J \to M$ by 
\[ \iota((x+iy),s) = \phi_{s+h}(\hat \iota(x+iy)).\]
For each $s$, the image $\iota(\BA_s)$ is a locally isometrically embedded Euclidean annulus (though $\iota$ does not restrict to a local isometry).
The map $\bar \iota \colon J \to L\BS^1$ is the composition of the inclusion $J \to \R$ with the covering $\R \to L\BS^1$ (since we have translated to assume the balance time of $\alpha$ is $0$).  We now establish the key properties of $\iota$.

We first prove that $\iota$ is an embedding.  Since $\BT_J$ is compact and $\iota$ is continuous, it suffices to prove that $\iota$ is injective.  By construction, $\iota$ maps each annulus $\BA_s$, for $s \in J$, injectively to the Euclidean cylinder $\phi_{h+s}(\BA_{-h}) = \phi_s(A')$ which has modulus $\frac{m_0}{\cosh(2s)} \geq 1$.  Suppose $\iota$ is not injective, in which case there are two different values $s \neq s' \in J$ so that $\phi_s(A') \cap \phi_{s'}(A') \neq \emptyset$.  Since $s-s' \neq 0$, $\phi_{s-s'}$ is a nonzero power of the first return map to $\Sigma_{s'}$, hence is a nonzero power of $\phi$.

Now note that two Euclidean annuli in a Euclidean cone surface that intersect either do so in Euclidean sub-annuli or else cross each other transversely. In the latter case the product of the moduli is at most $1$.   If the annuli intersect in a sub-annulus, then the isotopy class of the core curve of $\phi_{s'}(A')$ is sent by a power of a conjugate of $\phi$ to itself, contradicting the fact that $f$ is pseudo-Anosov.  So, the annuli $\phi_s(A')$ and $\phi_{s'}(A')$ must cross each other transversely.  The same is true of the strictly larger maximal Euclidean annuli containing these, whose moduli are therefore strictly greater than $1$.  This contradicts the fact that the product of the moduli is no more than $1$.  Therefore, $\iota$ is injective.

The fact that $\zeta$ agrees with the standard conformal structure on $\BA_{-h} = \BA$ just follows from the fact that $\hat \iota$ is an isometry, hence conformal.  Since $\phi_{t-s} \colon S_s^\zeta \to S_t^\zeta$ is affine for all $t,s$, it follows that $\iota|_{\BA_s} \colon \BA_s \to \phi_s(A')$ is an affine map with respect to the standard structure on $\BA_s = \BS^1 \times [0,1]$ on the domain and the conformal structure $\zeta$ on the image.  In particular, $\iota|_{\BA_h} \colon \BA_h \to \phi_{h}(A')$ is an affine map from a Euclidean annulus of modulus $1$ to another Euclidean annulus of modulus $1$ of the same area.  We can therefore isometrically parameterize $\phi_h(A')$ by $\BS^1 \times [0,1]$ so that with respect to these coordinates, the map $\iota|_{\BA_h}$ is given by
\begin{equation} \label{E:affine twist in coordinates} x+iy \mapsto (x+ry) + iy
\end{equation}
for some $r \in \mathbb R$.  The norm of the Beltrami coefficient of this map is $\frac{|r|}{\sqrt{r^2+4}}$, while on the other hand the map is $e^{4h}$--quasi-conformal (since the map $\phi_h \colon S_{-h}^\zeta \to S_h^\zeta$ is a Teichm\"uller map), and hence the norm of the Beltrami coefficient is also given by
\[ \frac{e^{4h}-1}{e^{4h}+1} = \frac{\sinh(2h)}{\cosh(2h)} = \frac{\sinh(\arccosh(m_0))}{\cosh(\arccosh(m_0))} = \frac{\sqrt{m_0^2-1}}{m_0}.\]
Setting these quasi-conformal dilatations equal, we have
\[ \frac{|r|}{\sqrt{r^2+4}} = \frac{\sqrt{m_0^2-1}}{m_0}, \]
and solving we see that $r = \pm 2 \sqrt{m_0^2-1} \in 2 \mathbb Z$.  Thus if we let $\Psi \colon \BA_h^\zeta \to \BA_{-h}^\zeta$ be a conformal map, the composition $\Psi \circ \phi_{2h}^\BT$ is given by the formula in (\ref{E:affine twist in coordinates}) and so is the $r^{th}$ power of a Dehn twist, completing the proof.
\end{proof}

\bibliography{refs.bib}
\bibliographystyle{amsalpha}
\end{document}